\def\version{}
\DeclareFontFamily{OT1}{eusb}{} \DeclareFontShape{OT1}{eusb}{m}{n} {<5> <6> <7> <8> <9> <10> <11> <12> <14.4> eusb10}{}
\DeclareMathAlphabet{\eusb}{OT1}{eusb}{m}{n}
\DeclareFontFamily{OT1}{eusm}{} \DeclareFontShape{OT1}{eusm}{m}{n} {<5> <6> <7> <8> <9> <10> <11> <12> <14.4> eusm10}{}
\DeclareMathAlphabet{\eusm}{OT1}{eusm}{m}{n}
\DeclareFontFamily{OT1}{eufm}{} \DeclareFontShape{OT1}{eufm}{m}{n} {<5> <6> <7> <8> <9> <10> <11> <12> <14.4> eufm10}{}
\DeclareMathAlphabet{\mathfrak}{OT1}{eufm}{m}{n}
\DeclareFontFamily{OT1}{fraktura}{}
\DeclareFontShape{OT1}{fraktura}{m}{n} {<5> <6> <7> <8> <9> <10> <11> <12> <13> <14.4> [1.1] eufm10}{}
\DeclareMathAlphabet{\fraktura}{OT1}{fraktura}{m}{n}
\DeclareFontFamily{OT1}{cmfi}{} \DeclareFontShape{OT1}{cmfi}{m}{n} {<5> <6> <7> <8> <9> <10> <11> <12> <13> <14.4> [0.9] cmfi10}{}
\DeclareMathAlphabet{\cmfi}{OT1}{cmfi}{b}{n}
\DeclareFontFamily{OT1}{cmss}{} \DeclareFontShape{OT1}{cmss}{m}{n} {<5> <6> <7> <8> <9> <10> <11> <12> <13> <14.4> cmss10}{}
\DeclareMathAlphabet{\cmss}{OT1}{cmss}{m}{n}
\newtheoremstyle{thm}{1.5ex}{1.5ex}{\itshape\rmfamily}{} {\bfseries\rmfamily}{}{2ex}{}
\newtheoremstyle{def}{1.5ex}{1.5ex}{\rmfamily\sl}{} {\bfseries\rmfamily}{}{2ex}{}
\newtheoremstyle{rem}{1.3ex}{1.3ex}{\rmfamily}{} {\bfseries\rmfamily}{}{2ex}{}
\newtheoremstyle{ass}{1.5ex}{1.5ex}{\rmfamily\sl}{} {\bfseries\rmfamily}{}{2ex}{}
\newenvironment{proofsect}[1] {\vskip0.1cm\noindent{\rmfamily\itshape#1.}}{\qed\vspace{0.15cm}}
\theoremstyle{thm}
\newtheorem{theorem}{Theorem}[section]
\newtheorem{lemma}[theorem]{Lemma}
\newtheorem{proposition}[theorem]{Proposition}
\newtheorem*{Main Theorem}{Main Theorem.}
\newtheorem{corollary}[theorem]{Corollary}
\newtheorem{definition}[theorem]{Definition}
\theoremstyle{rem}
\newtheorem{remark}[theorem]{{Remark}}
\numberwithin{equation}{section}
\renewcommand{\section}{\secdef\sct\sect}
\newcommand{\sct}[2][default]{\refstepcounter{section}
\addcontentsline{toc}{section}
{{\tocsection {}{\thesection}{\!\!\!\!#1\dotfill}}{}}
\vspace{0.7cm}
\centerline{ 
\scshape\arabic{section}.\ #1} \nopagebreak \vspace{0.2cm}}
\newcommand{\sect}[1]{
\vspace{0.4cm} \centerline{\large\scshape\rmfamily #1}
\vspace{0.2cm}}
\renewcommand{\subsection}{\secdef\subsct\sbsect}
\newcommand{\subsct}[2][default]{\refstepcounter{subsection}
\addcontentsline{toc}{subsection}
{{\tocsection{\!\!}{\hspace{1.2em}\thesubsection}{\!\!\!\!#1\dotfill}}{}}
\nopagebreak\vspace{0.45\baselineskip} {\flushleft\bf
\thesection.\arabic{subsection}~\bf #1.~}
\\*[3mm]\noindent
\nopagebreak}
\newcommand{\sbsect}[1]{
\vspace{0.1cm}\noindent
\textbf{#1.~}\vspace{0.1cm}}
\renewcommand{\subsubsection}{%
\secdef \subsubsect\sbsbsect}
\newcommand{\subsubsect}[2][default]{%
\refstepcounter{subsubsection} 
\addcontentsline{toc}{subsubsection}{{\tocsection{\!\!}
{\hspace{3.05em}\thesubsubsection}{\!\!\!\!#1\dotfill}}{}}
\nopagebreak
\vspace{0.15\baselineskip} \nopagebreak {\flushleft\rmfamily
\itshape\arabic{section}.\arabic{subsection}.\arabic{subsubsection}
\ \rmfamily #1\/.}\ }
\newcommand{\sbsbsect}[1]{\vspace{0.1cm}\noindent
\rmfamily \itshape
\arabic{section}.\arabic{subsection}.\arabic{subsubsection} \
\sffamily #1\/.\ }
\renewcommand{\caption}[1]{%
\vglue0.5cm
\refstepcounter{figure}
\begin{center}
\begin{minipage}[c]{0.8\textwidth}\small {\sc Fig.~\thefigure\ }#1\end{minipage}
\end{center}
}
\newcommand{\dist}{\operatorname{dist}}
\newcommand{\textd}{\text{\rm d}\mkern0.5mu}
\newcommand{\texte}{\text{\rm  e}\mkern0.7mu}
\newcommand{\Var}{\text{\rm Var}}
\newcommand{\WW}{\mathcal W}
\newcommand{\N}{\mathbb N}
\newcommand{\Q}{\mathbb Q}
\newcommand{\R}{\mathbb R}
\newcommand{\Z}{\mathbb Z}
\newcommand{\scrE}{\mathscr{E}}
\newcommand{\scrI}{\mathscr{I}}
\newcommand{\twoeqref}[2]{(\ref{#1}--\ref{#2})}
\newcommand{\cc}{{\text{\rm c}}}
\def\myffrac#1#2 in #3{\raise 2.6pt\hbox{$#3 #1$}\mkern-1.5mu\raise 0.8pt\hbox{$#3/$}\mkern-1.1mu\lower 1.5pt\hbox{$#3 #2$}}
\newcommand{\be}{\beta}
\newcommand{\ga}{\gamma}
\newcommand{\ep}{\epsilon}
\newcommand{\wt}{\widetilde}
\newcommand{\laweq}{\,\overset{\text{\rm law}}=\,}
\newcommand\independent{\protect\mathpalette{\protect\independenT}{\perp}}
\def\independenT#1#2{\mathrel{\rlap{$#1#2$}\mkern3mu{#1#2}}}
\newcommand{\frakp}{\mathfrak p}
\newcommand{\frakq}{\mathfrak q}
\begin{document}

\title[Oscillations in LRP distance \hfill \version\hfill]
{\large Arithmetic oscillations of the chemical distance\\in long-range percolation on~$\Z^d$}

\author[\hfill  \version \hfill Biskup and Krieger]
{Marek~Biskup \,and\, Andrew Krieger}
\thanks{\hglue-4.5mm\fontsize{9.6}{9.6}\selectfont\copyright\,\textrm{2022}\ \ \textrm{M.~Biskup, A.~Krieger.
Reproduction, by any means, of the entire
article for non-commercial purposes is permitted without charge.\vspace{2mm}}}
\maketitle

\vglue-5mm
\centerline{\textit{
Department of Mathematics, UCLA, Los Angeles, California, USA}}


\vskip4mm
\begin{quote}
\footnotesize \textbf{Abstract:}
We consider a long-range percolation graph on~$\mathbb Z^d$ where, in addition to the nearest-neighbor edges of~$\mathbb Z^d$, distinct~$x,y\in\mathbb Z^d$  are  connected by an edge independently with probability asymptotic to~$\beta|x-y|^{-s}$, for $s\in(d,2d)$, $\beta>0$ and~$|\cdot|$ a norm on~$\mathbb R^d$. We first show that, for all but a countably many~$\beta>0$, the graph-theoretical (a.k.a.~chemical) distance between typical vertices at $|\cdot|$-distance~$r$ is, with high probability as~$r\to\infty$, asymptotic to $\phi_\beta(r)(\log r)^\Delta$, where $\Delta^{-1}:=\log_2(2d/s)$ and~$\phi_\beta$ is a positive, bounded and continuous function subject to $\phi_\beta(r^\gamma)=\phi_\beta(r)$ for~$\gamma:=s/(2d)$. The proof parallels that in a continuum version of the model where a similar scaling was shown earlier by the first author and J.~Lin. This work also conjectured that~$\phi_\beta$ is constant which we show to be false by proving that $(\log\beta)^\Delta\phi_\beta$ tends, as~$\beta\to\infty$, to a non-constant limit which is independent of the specifics of the model. The proof reveals arithmetic rigidity of the shortest paths that maintain a hierarchical (dyadic) structure all the way to unit~scales.
\end{quote}

\section{Introduction and results}
\noindent
The asymptotic behavior of the intrinsic, a.k.a.\ graph-theoretical or chemical, distance in random graphs has been a subject of intense research. A prime example is the first passage percolation of Hammersley and Welsh~\cite{Hamersley-Welsh} where edges of~$\Z^d$ are assigned random lengths and one is interested in the aggregate edge length $L(x,y)$ of the shortest path connecting~$x$ to~$y$. Under suitable mixing and moment assumptions, the Subadditive Ergodic Theorem (Kingman~\cite{Kingman1,Kingman2}) shows that $ x\mapsto   L(0,x)$ is, for~$x$  large, asymptotic to a   (deterministic)   norm on~$\R^d$ and, in particular, $L(0,nx)$ scales asymptotically linearly with~$n$. The conclusion extends to the chemical distance on the infinite cluster of supercritical bond percolation on~$\Z^d$ with $d\ge2$ (Antal and Pisztora~\cite{Antal-Pisztora}, Garet and Marchand~\cite{Garet-Marchand}). See the recent review by Auffinger, Damron and~Hanson~\cite{50years-FPP}.

Our focus in the present paper is on the chemical distance in long-range percolation graphs. More precisely, we will use long-range percolation as a means to add random shortcuts to the existing nearest-neighbor structure of~$\Z^d$. Our setting will be as follows: Given a collection of numbers $\{\frakq(x)\}_{x\in\Z^d}\subseteq[0,\infty]$ satisfying $\frakq(x)=\frakq(-x)$ for all~$x\in\Z^d$ and a parameter $\beta\in(0,\infty)$, set
\begin{equation}
\label{E:1.0}
\frakp_\beta(x,y):=1-\exp\bigl\{-\beta\,\frakq(x-y)\bigr\}
\end{equation}
 (where~$\texte^{-\infty}:=0$)  and consider the random graph with vertices~$\Z^d$ and an undirected edge between~$x$ and~$y$ present with probability~$\frakp_\beta(x,y)$, independently of other edges.  

The cases of prime concern for us are those when~$\frakq$ exhibits power-law decay which, in light of our use of~$\beta$ as an independent parameter, we take to mean
\begin{equation}
\label{E:1.1}
\frakq(x)\,\sim\,\frac1{|x|^s},\quad|x|\to\infty,
\end{equation}
for a norm~$|\cdot|$ on~$\R^d$ and a parameter~$s>0$. We  also  assume that~$\frakq(x)=+\infty$ whenever~$x$ is a neighbor of the origin to ensure that all the nearest-neighbor edges of~$\Z^d$ are present, and the graph is thus connected. The chemical distance $D(x,y)$ between~$x,y\in\Z^d$ is then defined as the minimal number of edges in any path connecting~$x$ to~$y$. 

Earlier studies have revealed five distinct parameter regimes of asymptotic scaling of the chemical distance with respect to the Euclidean metric: 
\settowidth{\leftmargini}{(1111)}
\begin{enumerate}
\item[(1)] $s<d$, where the percolation graph on all of~$\Z^d$ has finite intrinsic diameter (Benjamini, Kesten, Peres and Schramm~\cite{BKPS}),
\item[(2)] $s=d$, where the chemical distance grows logarithmically modulo log-log corrections (Coppersmith, Gamarnik and Sviridenko~\cite{CGS}),
\item[(3)] $d<s<2d$, where the chemical distance growth is polylogarithmic with exponents increasing from~$1$ to~$\infty$ as~$s$ varies from~$d$ to~$2d$ (Biskup~\cite{B1,B2}),
\item[(4)] $s=2d$, where the chemical distance has sublinear polynomial growth with a~$\beta$-dependent exponent (Benjamini and Berger~\cite{Benjamini-Berger}, Ding and Sly~\cite{Ding-Sly}),
\item[(5)] $s>2d$, where the asymptotically-linear scaling with the underlying metric on~$\Z^d$ valid  generically  for first passage percolation resumes (Berger~\cite{Berger-LRP}).
\end{enumerate}
Our focus here is on the intermediate regime $d<s<2d$. Here the early work~\cite{B1} by the first author showed
\begin{equation}
D(0,x)=(\log|x|)^{\Delta+o(1)}\,\text{ when }\,\frakq(x)=|x|^{-s+o(1)}\,\text{ as }\,|x|\to\infty
\end{equation}
where
\begin{equation}
\label{E:1.4u}
\Delta:=\frac1{\log_2(2d/s)}.
\end{equation}
The first author and J.~Lin~\cite{Biskup-Lin} then sharpened this to an asymptotic statement for a closely related continuum model with asymptotic decay \eqref{E:1.1}.  Our first item of business is to extend this conclusion to the model on~$\Z^d$.  
Write $B(x,r):=\{y\in\Z^d\colon|x-y|<r\}$
for a ball in~$|\cdot|$-norm and let $\#$ denote the counting measure on~$\Z^d$. We then have:

\begin{theorem}
\label{thm-1}
Let $d\ge1$ and~$s\in(d,2d)$ and assume~$\frakq$ obeys \eqref{E:1.1}. Write $\Delta$ for the quantity in \eqref{E:1.4u} and let~$\gamma:=\frac s{2d}$. For each~$\beta>0$ there exists a continuous function $\phi_\beta\colon(1,\infty)\to(0,\infty)$ subject to the log-log-periodicity condition
\begin{equation}
\label{E:1.3a}
\forall r>1\colon\quad\phi_\beta(r^\gamma)=\phi_\beta(r)
\end{equation}
and there is an at most countable set~$\Sigma\subseteq(0,\infty)$ such that, for all $\beta\in(0,\infty)\smallsetminus\Sigma$,
\begin{equation}
\label{E:1.4a}
\forall\epsilon>0\colon\quad
\frac1{r^d}\,\#\Biggl(\biggl\{x\in B(0,r)\colon \Bigl|\frac{D(0,x)}{\phi_\beta(r)(\log r)^\Delta}-1\Bigr|>\epsilon\biggr\}\Biggr)\,\,\underset{r\to\infty}{\overset{P}\longrightarrow}\,\,0.
\end{equation}
The map $\beta\mapsto\phi_\beta(r)$ is non-increasing and left-continuous. It is continuous at all~$\beta\not\in\Sigma$.
\end{theorem}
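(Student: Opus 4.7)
The strategy parallels the continuum argument of Biskup and Lin~\cite{Biskup-Lin}, adapted to the lattice setting. The guiding picture is a renormalization one: a typical shortest path from~$0$ to a vertex~$x$ with $|x|\asymp r$ should, to leading order, consist of one long edge spanning the full scale~$r$ and two sub-paths, each of which solves the same problem at the reduced scale~$r^\gamma$. The number of iterations of the map $r\mapsto r^\gamma$ required to bring~$r$ down to $O(1)$ is $\log\log r/\log(1/\gamma)+O(1)$, producing roughly $(\log r)^\Delta$ long edges in total with $\Delta=1/\log_2(1/\gamma)=1/\log_2(2d/s)$ as in~\eqref{E:1.4u}. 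I would define $\phi_\beta(r)$ as a limit (along subsequences of~$r$) of $D(0,X_r)/(\log r)^\Delta$, where $X_r$ is a uniform vertex in $B(0,r)$; the log-log-periodicity~\eqref{E:1.3a} is built directly into the recursion.

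The core of the argument is a pair of matching probabilistic bounds on $D(0,X_r)$. For the upper bound I would construct a hierarchical path explicitly: at each scale $r^{\gamma^k}$, tile $B(0,r)$ by boxes of that side-length, and use a second-moment estimate together with~\eqref{E:1.1} to show that with high probability one can find a long edge connecting nearby boxes. Concatenating such edges across the hierarchy, padded by nearest-neighbor segments at unit scale, yields a path of the desired length. For the lower bound I would argue that any path from~$0$ to a typical $x\in B(0,r)$ must contain, at every dyadic scale between~$1$ and~$r$, at least one edge of that scale; an entropy / first-moment argument over candidate edge sequences rules out substantially shorter paths. Concentration of $D(0,X_r)$ about its typical value would follow from the near-independence of edges at widely separated scales, via a martingale decomposition or an Efron--Stein-type bound.

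To upgrade these bounds to the existence of a continuous~$\phi_\beta$ with log-log-periodicity, I would use that the renormalization map is contracting on a suitable space of candidate prefactors, with $r$-dependence pinned down by monotone interpolation after the discrete scales $r^{\gamma^{-n}}$ are handled. Monotonicity in~$\beta$ follows from the standard monotone coupling $U_{xy}\le\frakp_\beta(x,y)$ for i.i.d.\ uniforms~$U_{xy}$: increasing~$\beta$ only adds edges, so $D(0,x)$ is stochastically non-increasing and $\phi_\beta(r)$ inherits this monotonicity. Left-continuity in~$\beta$ follows by monotone convergence along $\beta_n\uparrow\beta$ in the coupling. The set~$\Sigma$ is then defined as the at-most-countable set of jump discontinuities of the non-increasing map $\beta\mapsto\phi_\beta(r_0)$ for a fixed~$r_0$; continuity at $\beta\notin\Sigma$ is automatic by monotonicity, and standard arguments show the exceptional set does not depend on the choice of~$r_0$.

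\textbf{Main obstacle.} The principal difficulty lies in the sharpness of the two-sided bounds: one must control $D(0,x)$ not merely up to the exponent~$\Delta$ of~$\log r$ (which \cite{B1} already provides), but up to the correct $\beta$- and~$r$-dependent prefactor~$\phi_\beta(r)$. This requires the lower bound to track essentially the true number of long edges used at each scale, and the upper-bound construction to match this count up to $o(1)$ relative error. The lattice setting introduces further nuisances: the asymptotic nature of~\eqref{E:1.1} (as opposed to an exact power law) leaves error terms that must be shown to vanish uniformly in the hierarchy, the nearest-neighbor backbone must be cleanly exploited near unit scale, and one must verify that the hierarchical structure actually \emph{closes} --- the arithmetic rigidity alluded to in the abstract --- rather than permitting shortest paths of a qualitatively different shape.
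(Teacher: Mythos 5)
Your high-level picture (renormalization at scales $r^{\gamma^k}$, log-log-periodicity forced by the recursion, monotone coupling in $\beta$, $\Sigma$ as a countable jump set) matches the paper's. But the concrete techniques you propose for the two-sided bounds would not close the argument, and this is where the paper's proof has its real content.

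The paper's central device is a genuine \emph{subadditive} limit, not a matching pair of first/second-moment bounds. To make subadditivity work it introduces a \emph{restricted distance} $\wt D(x,y)$ (paths confined to $B(x,2|x-y|_1)$), precisely so that $\wt D(x,y)$ and $\wt D(x',y')$ are independent once the two pairs are well separated; without this restriction the two sub-distances in a ``one long edge plus two smaller pieces'' decomposition are \emph{not} independent, and the iteration you are sketching does not tensorize. It then couples the lattice edge configuration to a continuum Poisson process and optimizes over the landing point $(X,Y)$ of a long edge; the optimal endpoints, rescaled, produce a random variable $Z$ with an explicit density, and the auxiliary variable $W:=Z_0\prod_{k\ge1}|Z_k|^{\gamma^k}$ is chosen so that $|W|^\gamma Z\laweq W$ is a fixed point of the rescaling. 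Averaging $\wt D(0,\lfloor rW\rfloor)$ against $W$ is what lets one pass from the one-step inequality $E\wt D(0,rW)\le 2E\wt D(0,r^\gamma W)+c$ to the existence of $L_\beta(r)=\lim_n 2^{-n}E\wt D(0,\lfloor r^{\gamma^{-n}}W\rfloor)$ and hence of $\phi_\beta$. Your proposal to obtain $\phi_\beta$ from a contraction on a space of prefactors has no analogue in the paper and, as far as I can see, there is no contraction to exploit; the quantity is pinned down by the subadditive/monotone limit, not by a fixed-point theorem.

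The lower bound is the subtler half and your ``entropy / first-moment argument over candidate edge sequences'' does not address it at the required precision. That style of counting gives the exponent $\Delta$ (as in~\cite{B1}) but loses constants, while Theorem~\ref{thm-1} needs the exact prefactor. The paper instead introduces the interpolating family $\wt D_k$ with $D\le\cdots\le\wt D_{k+1}\le\wt D_k\le\cdots\le\wt D$, proves that $\wt D_k=D$ with high probability for $k$ large (Lemma~\ref{lemma-3.3}), and then runs a \emph{downward} induction in $k$ using a $\beta$-shifted subadditivity bound
\[
E_\beta\otimes E_W\,\wt D_{k}(0,\lfloor r\epsilon^{-\frac1{2d-s}}W\rfloor)\le 2E_{\beta(1-2\epsilon)}\otimes E_W\,\wt D_{k+1}(0,\lfloor r^\gamma\epsilon^{-\frac1{2d-s}}W\rfloor)+c,
\]
which is where the continuity of $\beta\mapsto\phi_\beta(r)$ at $\beta\notin\Sigma$ (Lemma~\ref{L:ratio-ae-be}) is \emph{used}, not merely recorded. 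Your plan treats the continuity statement as a cosmetic add-on, whereas in the paper it is load-bearing: without Lemma~\ref{L:ratio-ae-be} the induction on $k$ does not close. Also, the paper does not prove concentration by martingale or Efron--Stein methods; the $L^2$ convergence comes from the same subadditive bound applied to $E(X_n(r)^2)$, plus a Dini-type argument that requires a continuous extension of $x\mapsto\wt D(0,x)$ to $\R^d$ (Lemma~\ref{P:dist-1-plim}), another wrinkle specific to the lattice model that your sketch omits.

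In short: the renormalization heuristic, the monotonicity, and the definition of $\Sigma$ are on target, but the machinery you propose to implement the bounds (tiling + second moment for the upper bound, entropy counting for the lower bound, martingale concentration, contraction for $\phi_\beta$) would reproduce \cite{B1} and not the sharp asymptotics. The missing ideas are the restricted distance $\wt D$ for independence, the fixed-point random variable $W$, and the $\wt D_k$-interpolation plus $\beta$-continuity for the lower bound.
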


The function~$\phi_\beta$,  which plays the role of so called time constant in our setting,  depends on the whole set of connection parameters $\{\frakq(x)\}_{x\in\Z^d}$, but we keep that dependence implicit. Referring to \eqref{E:1.3a} as log-log-periodicity is justified by letting $\psi_\beta\colon\R\to(0,\infty)$ be defined by
\begin{equation}
\label{E:1.6a}
\psi_\beta(t):=\phi_\beta\bigl(\texte^{\gamma^{-t}}\bigr)
\end{equation}
and noting that \eqref{E:1.3a} then translates into (additive) $1$-periodicity of~$\psi_\beta$. The restriction to~$\beta\not\in\Sigma$ reflects on our inability to control the continuity of~$\beta\mapsto\phi_\beta(r)$.  Indeed, writing $L_\beta(r):=(\log r)^\Delta\phi_\beta(r)$ the proof actually gives
\begin{equation}
\frac1{\#B(0,r)}\, \#\biggl(\Bigl\{x\in B(0,r)\colon (1-\epsilon)L_{\beta^+}(r)\le D(0,x)\le (1+\epsilon)L_{\beta}(r)\Bigr\}\biggr)\,\underset{r\to\infty}{\overset P\longrightarrow}\,1,
\end{equation}
where $L_{\beta^+}(r):=\lim_{\beta'\downarrow\beta}L_{\beta'}(r)$. The latter then equals~$L_\beta(r)$ when~$\beta\not\in\Sigma$.

We note that in the continuum setting of~\cite{Biskup-Lin}, scaling arguments were used to show that~$\beta,r\mapsto\phi_\beta(r)$ is jointly continuous, which gave convergence for all~$\beta>0$. In addition, the convergence $D(0,rx)/L(r)\to1$ in~\cite{Biskup-Lin} was shown to hold in probability for every $x\ne0$. In this ``pointwise'' version, the mode of convergence cannot be improved to ``almost sure,'' at least in $d=1$. This is due to long edges offering effective shortcuts at and near the points where they land; see \cite[Observation 1.3]{Biskup-Lin} and~Fig~\ref{fig-1}.

\nopagebreak
\begin{figure}[t]
\vglue-1mm
\centerline{\includegraphics[width=0.7\textwidth]{./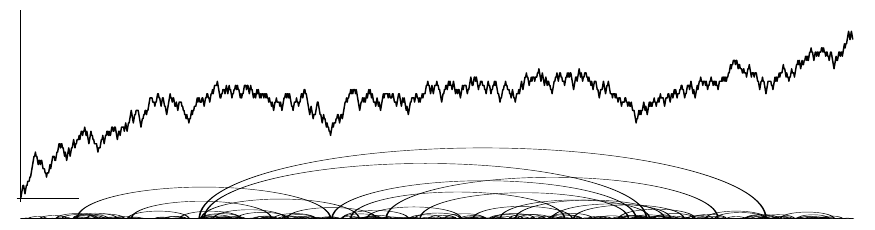}}
\vglue3mm
\centerline{\includegraphics[width=0.7\textwidth]{./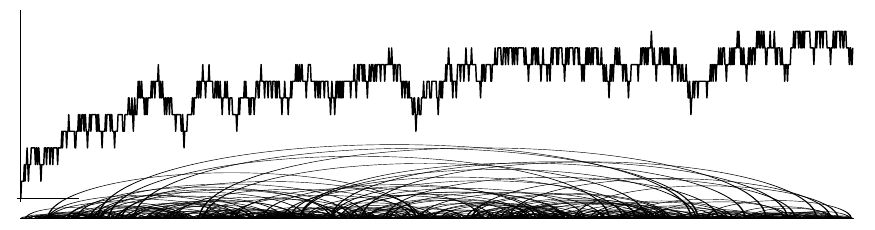}}
\begin{quote}
\small 
\caption{
\label{fig-1}
\small
Top figure: A plot of the chemical distance from the origin to points in $\{1,\dots,2000\}$ in a sample of long-range percolation on~$\Z$ with $\frakq(x):=|x|^{-s}$ (for~$|x|>1$) and parameters $s:=1.5$ and~$\beta:=1$. The arcs below depict the edges in the underlying graph. Bottom figure: A corresponding plot for a sample with $s:=1.5$ but $\beta:=5$. Note that the distance drops at the points where a long edge lands. The chemical distance plots are not to scale.}
\normalsize
\end{quote}
\end{figure}

Natural follow-up questions to \eqref{E:1.4a} are: What is~$\phi_\beta$? Can it be described more explicitly? What is its limit behavior as~$\beta\to\infty$ and~$\beta\downarrow0$? 
In~\cite{Biskup-Lin}, $\phi_\beta$ appeared to arise from the method of proof that was based  on subadditivity arguments along doubly-exponentially growing scales. In light of the  canonical scaling properties  of the continuum model, it seemed reasonable to conjecture that~$\phi_\beta$ is generally constant. However, as our next result shows, this is false. 

\begin{theorem}
\label{thm-2}
Let~$d\ge1$, $s\in(d,2d)$ and assume~$\frakq$ as above. Write~$\phi_\beta$ for the function from Theorem~\ref{thm-1}. Denote $m(\beta):=\sup\{k\in\Z\colon \gamma^{-k}\le\log\beta\}$ for $\gamma:=\frac s{2d}$ and set
\begin{equation}
\label{E:1.8a}
u(\beta):=\gamma^{m(\beta)}\log\beta\in[1,\gamma^{-1}).
\end{equation}
Define  $\psi_\beta$ from~$\phi_\beta$ via \eqref{E:1.6a}. Then for all $t\in[0,1]$,
\begin{equation}
\label{E:1.5}
(\log\beta)^\Delta\,\psi_{\beta}\biggl(t+\frac{\log(\frac{u(\beta)}{2d-s})}{\log(1/\gamma)}\biggr)\,\underset{\beta\to\infty}\longrightarrow\,\Bigl[\frac{s}{2d-s}(2\gamma)^{-t}-2\frac{s-d}{2d-s}2^{-t}\Bigr](2d-s)^\Delta
\end{equation}
with the limit uniform on~$[0,1]$.
\end{theorem}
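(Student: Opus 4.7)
The plan is to combine the hierarchical renormalization underlying Theorem~\ref{thm-1} with a detailed analysis of the chemical distance at the crossover scales where the model transitions from a ``direct-edge'' regime (typical edge from the origin present with probability of order one) to the ``hierarchical'' regime governed by the $r\mapsto r^{1/\gamma}$ recursion. For large $\beta$, this crossover takes place in the window $[\beta^{1/s},\beta^{1/(s-d)}]$; the non-constant limit in \eqref{E:1.5} ultimately reflects the arithmetic structure of how this window interacts with the log-log-periodicity scale $r\mapsto r^{1/\gamma}$.

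First I would revisit the proof of Theorem~\ref{thm-1} to extract the core recursion $L_\beta(r^{1/\gamma})= 2L_\beta(r)+1+o(1)$ for $L_\beta(r):=\phi_\beta(r)(\log r)^\Delta$, ensuring the $o(1)$ is uniform in $\beta$ in a sense compatible with eventual multiplication by $(\log\beta)^\Delta$. Second, I would compute $L_\beta$ on the crossover window: at $R=\beta^{1/s}$, $L_\beta(R)\to 1$ as $\beta\to\infty$ since direct edges from $0$ to $\partial B(0,R)$ exist with probability approaching one; the values at intermediate $R\in[\beta^{1/s},\beta^{1/(s-d)}]$ follow from a first-moment calculation on the number of $\ell$-step paths from $0$ to $\partial B(0,R)$, of order $\beta^\ell R^{(\ell-1)d-\ell s}$, giving a typical $\ell\sim(d\log R)/\log\beta$ that interpolates continuously between $1$ and $+\infty$ as $R$ traverses the window. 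A matching lower bound via a second-moment or Chen--Stein argument would upgrade this to concentration of $D(0,x)/\ell(R,\beta)$ at $1$ in probability.

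Iterating the recursion from the crossover region up to an arbitrary large scale $r$, and combining with the base-case asymptotics, produces an explicit expression for $(\log\beta)^\Delta\phi_\beta(r)$ as $\beta\to\infty$. Using $(1/\gamma)^\Delta=2$ and the identity $(\log\beta)^\Delta=a^\Delta(\log r_a)^\Delta$ for any power scale $r_a:=\beta^{1/a}$, the scale parameters translate into the $t$ of the theorem, while the shift $c(\beta):=\log(u(\beta)/(2d-s))/\log(1/\gamma)$ is precisely the translation converting the $\beta$-dependent fractional parameter on the crossover window (relative to $\beta^{1/(s-d)}$, say) into the $\beta$-independent $t\in[0,1]$; the appearance of $u(\beta)$ and of the power $2d-s$ is a direct reflection of this geometry. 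After algebraic simplification the resulting expression collapses to the bracket in \eqref{E:1.5}, with the $(2\gamma)^{-t}=(d/s)^t$ piece tracking the linear-in-$\log R$ base-case growth (exponent $d/s=1/(2\gamma)$), and the $2^{-t}$ piece tracking the hierarchical doubling.

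The principal obstacle is the second step: the sharp computation of the base-case chemical distance on $[\beta^{1/s},\beta^{1/(s-d)}]$ as $\beta\to\infty$. The first moment is straightforward; the matching lower bound---via second-moment or Poisson-approximation arguments---must be done with enough precision to establish rigid concentration of $D$ at the predicted integer-valued $\ell(R,\beta)$, which is the core of the ``arithmetic rigidity'' visible in \eqref{E:1.5}. A secondary concern is ensuring the $o(1)$ error in the hierarchical recursion is uniform in the relevant parameters so that the final convergence is uniform in $t\in[0,1]$ as asserted.
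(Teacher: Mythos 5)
There is a genuine gap: your ``first-moment calculation on the number of $\ell$-step paths,'' claimed to give $\beta^\ell R^{(\ell-1)d-\ell s}$, only accounts for the bulk contribution where all intermediate vertices sit at distance $\sim R$ from the endpoints. In the regime $d<s<2d$ this is \emph{not} the dominant term: the first moment is governed by paths whose intermediate vertices cluster hierarchically near the two endpoints, which is exactly what makes the chemical distance grow like $(\log R)^\Delta$ with $\Delta>1$ rather than linearly in $\log R$. Concretely, already for $\ell=2$ and $R=\beta^{\theta_2}$ with $\theta_2=(s+d)/s^2$, the bulk term $\beta^2R^{d-2s}=\beta^{d(d-s)/s^2}\to0$, yet two-step paths with one endpoint close to $0$ do exist with probability bounded below; your formula would thus declare $D(0,x)>2$ at the scale where the true answer is $2$. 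As a consequence, the predicted $\ell(R,\beta)\sim d\log R/\log\beta$ has the wrong shape (linear, then diverging near $\log R=\log\beta/(s-d)$), and there is no clean ``crossover window'' $[\beta^{1/s},\beta^{1/(s-d)}]$ separating a direct-edge regime from a hierarchical regime: the dyadic structure is active all the way down to the unit scale $\beta^{\theta_1}=\beta^{1/s}$, which is precisely the ``arithmetic rigidity'' the theorem expresses.

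The paper's actual route replaces the base-case window calculation entirely by the exponent recursion $\theta_{n+1}=\frac1s+\frac ds\max_{0\le k\le n}(\theta_k+\theta_{n-k})$, $\theta_0=0$, whose concavity forces the maximum at $k\approx n/2$; one then solves $\theta_{2^n-1}$ in closed form, shows $k\mapsto\theta_k$ is piecewise linear between powers of two, and reads off the limit in \eqref{E:1.5} from the linear interpolation and the change of variables $\lambda+(1-\lambda)\gamma^{-1}=\gamma^{-t}$. Turning this heuristic into a proof requires precisely the ingredients your sketch leaves vague: for the upper bound, a subadditivity estimate in which \emph{all} $\beta$-dependent factors are tracked explicitly (the paper's Lemma~\ref{lemma-4.4a} and Proposition~\ref{prop-4.3}, which deliver $E(D)\le n[1+\chi(\alpha,\beta)]$ with $\chi\to0$), and for the lower bound an induction on $n$ controlling $P(D(0,x)\le n)$ via expected ball sizes and the van den Berg--Kesten inequality (Proposition~\ref{prop-3.1} and Lemmas~\ref{lemma-4.2}--\ref{lemma-4.4}). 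A second-moment or Chen--Stein argument, as you propose, runs into severe correlations between paths sharing a long edge; the BK inequality is what makes the lower bound tractable here. You would need to abandon the naive path-count, adopt the $\theta_n$-recursion as the organizing object, and prove the uniform-in-$\beta$ error bounds before the ``iterate the recursion'' step can close.
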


As is readily checked,  the function on the right of \eqref{E:1.5} equals $(2d-s)^\Delta$ at $t=0$ and~$t=1$ (which is consistent with the $1$-periodicity of~$\psi_\beta$) yet, being the difference of two exponentials with distinct bases, it is not constant. We thus conclude:

\begin{corollary}
For each~$\frakq$ as above there is~$\beta_0\in(0,\infty)$ such that~$\phi_\beta$ is not constant for~$\beta>\beta_0$.
\end{corollary}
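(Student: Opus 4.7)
The plan is to deduce the corollary as a direct consequence of Theorem~\ref{thm-2}, where the heavy lifting has already been done. Argue by contradiction: if no such $\beta_0$ exists, one can select a sequence $\beta_n\to\infty$ along which $\phi_{\beta_n}$ is constant. By the definition in \eqref{E:1.6a}, $\psi_{\beta_n}$ is then also constant, whence the function
\begin{equation*}
t\,\mapsto\,(\log\beta_n)^\Delta\,\psi_{\beta_n}\Bigl(t-\frac{\log(u(\beta_n)/(2d-s))}{\log(1/\gamma)}\Bigr)
\end{equation*}
appearing on the left-hand side of \eqref{E:1.5} is itself constant in~$t$ for every~$n$ (shifting a constant function and multiplying by a $t$-independent scalar preserves constancy). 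A uniform-on-$[0,1]$ limit of constant functions is constant, so it suffices to verify that the explicit right-hand side
\begin{equation*}
F(t):=\Bigl[\frac{s}{2d-s}(2\gamma)^{-t}-2\frac{s-d}{2d-s}2^{-t}\Bigr](2d-s)^\Delta
\end{equation*}
of \eqref{E:1.5} is \emph{not} constant on $[0,1]$; this will contradict Theorem~\ref{thm-2} and finish the proof.

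Non-constancy of~$F$ is routine. Writing $F(t)=c_1(2\gamma)^{-t}+c_2\,2^{-t}$, the assumption $s\in(d,2d)$ gives $\gamma=s/(2d)\in(\tfrac12,1)$, so the two bases $2\gamma$ and~$2$ are distinct and both exceed~$1$. The coefficient $c_1=s(2d-s)^{\Delta-1}$ is strictly positive and the coefficient $c_2=-2(s-d)(2d-s)^{\Delta-1}$ is strictly negative, since $s>d$. Were $F$ constant on an interval, then $F'\equiv 0$ there would force
\begin{equation*}
\gamma^{-t}\,=\,\frac{(2\gamma)^{-t}}{2^{-t}}\,=\,-\frac{c_2\log 2}{c_1\log(2\gamma)},
\end{equation*}
i.e., $\gamma^{-t}$ would equal a constant. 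This is impossible because $\gamma\ne 1$ makes $t\mapsto\gamma^{-t}$ strictly monotone. Hence $F$ is non-constant on $[0,1]$, as needed. (As a sanity check, one computes $F(0)=F(1)=(2d-s)^\Delta$, consistent with the $1$-periodicity of the limit.)

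There is no real obstacle here beyond invoking Theorem~\ref{thm-2}: the corollary is a short postscript expressing that the two distinct exponential bases in \eqref{E:1.5} are a genuine feature, not a degenerate artifact, and thus rule out the naive guess that $\phi_\beta$ is constant.
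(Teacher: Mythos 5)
Your proof is correct and takes essentially the same route as the paper: the paper simply observes that the right-hand side of \eqref{E:1.5} is a non-trivial linear combination of two exponentials with distinct bases $2\gamma\neq 2$, hence non-constant, and concludes directly. You spell out the contradiction argument (extracting a sequence $\beta_n\to\infty$ along which $\phi_{\beta_n}$ is constant and noting that a uniform limit of constants is constant) and verify non-constancy via the derivative; both steps are sound and just make the paper's terse conclusion explicit.
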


This refutes Conjecture~1.4 from~\cite{Biskup-Lin}  for the lattice version of the model.  The conclusion of Theorem~\ref{thm-2} also reveals the overall scaling of~$\phi_\beta$ with~$\beta$:

\begin{corollary}
For each $\frakq$ as above there are $c,C\in(0,\infty)$ and~$\beta_1>1$ such that
\begin{equation}
\label{E:1.10}
\forall \beta>\beta_1\,\forall r>1\colon\quad \frac{c}{(\log\beta)^\Delta}\le\phi_\beta(r)\le\frac{C}{(\log\beta)^\Delta}.
\end{equation}
\end{corollary}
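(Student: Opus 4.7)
The plan is to derive both inequalities directly from Theorem~\ref{thm-2}, by first establishing that the limit function on the right-hand side of~\eqref{E:1.5} is continuous and strictly positive on $[0,1]$, and then transferring this uniform control back to~$\phi_\beta$ via the $1$-periodicity of~$\psi_\beta$.

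First I would denote $F(t):=\bigl[\frac{s}{2d-s}(2\gamma)^{-t}-2\frac{s-d}{2d-s}2^{-t}\bigr](2d-s)^\Delta$ and introduce $A:=\frac{s}{2d-s}$, $B:=\frac{2(s-d)}{2d-s}$, so that $F(t)/(2d-s)^\Delta = A(2\gamma)^{-t}-B\cdot 2^{-t}$. A one-line calculation gives $A-B=1$. The assumption $s\in(d,2d)$ forces $\gamma\in(\tfrac12,1)$, hence $2\gamma\in(1,2)$ and $(2\gamma)^{-t}\ge 2^{-t}$ for $t\in[0,1]$. Since $A>0$, this yields
\[
\frac{F(t)}{(2d-s)^\Delta} \,\ge\, A\cdot 2^{-t}-B\cdot 2^{-t} \,=\, (A-B)\cdot 2^{-t} \,=\, 2^{-t} \,\ge\, \tfrac12
\]
throughout $[0,1]$. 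By continuity, $m:=\min_{t\in[0,1]}F(t)$ and $M:=\max_{t\in[0,1]}F(t)$ are both finite and strictly positive.

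Next, the uniform convergence asserted in Theorem~\ref{thm-2} will supply $\beta_1>1$ such that, for every $\beta>\beta_1$ and every $t\in[0,1]$,
\[
\bigl|(\log\beta)^\Delta \psi_\beta\bigl(t-\theta(\beta)\bigr)-F(t)\bigr| \,<\, m/2,
\]
where $\theta(\beta):=\log\bigl(u(\beta)/(2d-s)\bigr)/\log(1/\gamma)$. Hence
\[
\frac{m}{2\,(\log\beta)^\Delta} \,\le\, \psi_\beta\bigl(t-\theta(\beta)\bigr) \,\le\, \frac{M+m/2}{(\log\beta)^\Delta}
\]
for every $t\in[0,1]$. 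As $t$ varies over $[0,1]$, the argument $t-\theta(\beta)$ traces out an entire period of the $1$-periodic function $\psi_\beta$, so the same bounds will hold for $\psi_\beta(s)$ at every $s\in\R$. Translating back via the identity $\phi_\beta(r)=\psi_\beta\bigl(\log\log r/\log(1/\gamma)\bigr)$, valid for all $r>1$, then produces~\eqref{E:1.10} with $c:=m/2$ and $C:=M+m/2$.

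The only step that calls for any real verification is the strict positivity of $F$ on $[0,1]$, which rests on the cancellation $A-B=1$ identified above; the remainder is a mechanical transfer of uniform bounds through the $1$-periodic reparametrization, so I do not anticipate any substantive obstacle.
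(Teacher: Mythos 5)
Your proof is correct and is essentially the natural (and, as far as I can tell, the intended) way to derive the corollary from Theorem~\ref{thm-2}: verify that the explicit limit function is bounded away from zero and infinity on $[0,1]$, invoke the uniform convergence and the $1$-periodicity of $\psi_\beta$, and pull back through $\phi_\beta(r)=\psi_\beta(\log\log r/\log(1/\gamma))$. The key algebraic observation $A-B=1$, together with $(2\gamma)^{-t}\ge 2^{-t}$ for $t\ge 0$ and $A>0$, does establish strict positivity, and the remaining steps are carried out correctly.
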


The upshot of Theorem~\ref{thm-2} is that the asymptotic distances exhibit a \emph{universal} scaling limit as~$\beta\to\infty$ that depends only on the dimension~$d$ and the exponent~$s$ but not on the particulars of~$\frakq$. A plot of this limit, along with that for the asymptotic distance function $L_\beta(r):=\phi_\beta(r)(\log r)^\Delta$, is shown in Fig.~\ref{fig-2}.

The fact that~$\phi_\beta$ is not constant means that, for~$\beta$ large, $D(0,x)$ is sensitive to the \emph{arithmetic} nature of~$|x|$ --- namely, the fractional part of $\log_{1/\gamma}\log(|x|)$. The need for the $u(\beta)$-dependent term in \eqref{E:1.5} reveals that similar arithmetic oscillations occur also in $\beta$-dependence of the distance scaling function. As we explain below, these arise from the minimizing paths being rigid down to a lattice scale.

\nopagebreak
\vskip0.2cm
\begin{figure}[t]
\vglue-1mm
\centerline{\includegraphics[width=0.6\textwidth]{./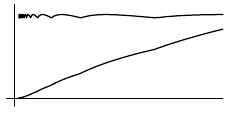}}
\begin{quote}
\small 
\vglue-0.3cm
\caption{
\label{fig-2}
\small
The graph of the $\beta\to\infty$ limit of $r\mapsto \phi_\beta(r)$ (top curve) and the corresponding limit for $L_\beta(r):=(\log r)^\Delta\phi_\beta(r)$ (bottom curve) for the choices $d=1$ and~$s=1.6$. (The adjustment due to~$u(\beta)$ is  absorbed into  the parametrization of the horizontal axis.) The bottom graph touches down on the horizontal axis at~$r=1$. The mild cusps at the points $\{\texte^{\gamma^n}\colon n\in\Z\}$ arise from the function on the right of \eqref{E:1.5} having unequal derivatives at~$t=0$ and~$t=1$.}
\normalsize
\end{quote}
\end{figure}

\section{Main ideas, connections and open questions}
\noindent
We proceed to review the main ideas of the proofs. The proof of Theorem~\ref{thm-1} runs very closely to that in \cite{Biskup-Lin} and so we focus on Theorem~\ref{thm-2}. We also highlight natural questions that we find worthy of further study and make connections to literature.

\subsection{Heuristics for distance oscillations}
\label{sec-2.1}\noindent
As shown in earlier work on this problem~\cite{B1,B2,Biskup-Lin}, the polylogarithmic scaling of the graph distance with the underlying metric on~$\Z^d$ in the parameter range $d<s<2d$ arises from a dyadic structure of the minimizing paths. Naturally, the larger the~$\beta$, the more long edges are there and the more rigid the dyadic structure should be expected to  be.  We will now present a semi-heuristic derivation of an asymptotic formula for the graph distance in the limit as~$\beta\to\infty$ that yields the conclusion \eqref{E:1.5}. This formula will be justified rigorously in later sections by way of asymptotically matching upper and lower bounds. 

Consider the long-range percolation on~$\Z^d$ with connection probabilities \twoeqref{E:1.0}{E:1.1} for some~$s\in(d,2d)$ and $\beta>0$. 
The aforementioned dyadic hierarchical structure of minimizing paths comes from the observation that, given two sites~$x$ and~$y$ with $N:=|x-y|\gg1$, the ball $B(x,N_1)$ is likely to contain an edge to the ball $B(y,N_2)$ provided that
\begin{equation}
\label{E:3.1a}
\beta\frac{(N_1N_2)^d}{N^s}\gg1
\end{equation}
while having even one such edge is  unlikely when the quantity on the left is~$\ll1$.  Writing (whenever such an edge exists)~$z$ for  the endpoint in $B(x,N_1)$ and~$z'$ for the endpoint in~$B(y,N_2)$, this yields  the  key \emph{subadditivity inequality}
\begin{equation}
\label{E:3.2a}
D(x,y)\le 1+D(x,z)+D(y,z')
\end{equation}
that drives all the recent work~\cite{B1,B2,Biskup-Lin}.
As it turns out, the inequality \eqref{E:3.2a} is actually saturated for at least one ``optimal'' choice of the edge $(z,z')$ where finding an optimal edge includes optimizing over the ``radii''~$N_1$ and~$N_2$ subject to \eqref{E:3.1a}.

Under the additional assumption that~$\beta\gg1$, these observations seem to point to the conclusion that $D(x,y)$ increases by one every time~$N:=|x-y|$ increases, roughly, through a specific power of~$\beta$. To see this note first that \twoeqref{E:1.0}{E:1.1} show that, for~$\beta$ large, vertices at $|\cdot|$-distance much smaller than~$\beta^{1/s}$ are very likely connected by a single edge while those at $|\cdot|$-distance much larger than~$\beta^{1/s}$ are quite unlikely to do so. Hence, with high probability, $D(x,y) =1$ when $0<N:=|x-y|\ll\beta^{1/s}$ and~$D(x,y)\ge2$ when~$N\gg\beta^{1/s}$. 
Proceeding inductively, if we assume that for each  $k=0,\dots,n$ there is~$\theta_k\ge0$ such that, with high probability once~$\beta$ is large, 
\begin{equation}
\label{E:2.3a}
D(x,y)\begin{cases}
\le k,\qquad&\text{for }N:=|x-y|\ll\beta^{\theta_k},
\\
\ge k+1,\qquad&\text{for }N\gg\beta^{\theta_k},
\end{cases}
\end{equation}
then  \twoeqref{E:3.1a}{E:3.2a} with $N_1:=\beta^{\theta_k}$ and~$N_2:=\beta^{\theta_{n-k}}$ yield~$D(x,y)\le n+1$ as long as, for at least one~$k\in\{0,\dots,n\}$,
\begin{equation}
\label{E:2.4w}
N\ll \beta^{1/s}(N_1N_2)^{d/s} = \beta^{\frac1s+\frac ds(\theta_k+\theta_{n-k})}.
\end{equation}
The fact that \eqref{E:3.2a} reduces to equality for an optimal choice of $(z,z')$ --- which  dictates the choice of~$k$ --- in turn gives $D(x,y)> n+1$ when~$N$ is much larger than the right-hand side for  every  $k\in\{0,\dots,n\}$. This reproduces the induction assumption \eqref{E:2.3a} for $k:=n+1$ provided~$\theta_{n+1}$ is set to the maximal exponent in \eqref{E:2.4w}. We are thus lead to:

\begin{definition}[Exponent sequence]
\label{def-1}
Let~$\{\theta_k\}_{k\ge0}$ be the sequence defined by the recursion
\begin{equation}
\theta_{n+1}:=\frac1s+\frac ds\max_{0\le k\le n}(\theta_k+\theta_{n-k})
\end{equation}
with initial value~$\theta_0:=0$.
\end{definition}

We will now make a couple of mathematical observations about this recursion and then solve it explicitly. First we note that the term $\theta_k+\theta_{n-k}$ is maximized by the ``most symmetric'' value of~$k$:

\begin{lemma}
\label{lemma-3.2}
Define the auxiliary sequence $\{\tilde\theta_n\}_{n\ge1}$ by $\tilde\theta_0:=0$ and, recursively,
\begin{equation}
\label{E:2.3}
\tilde\theta_{2n}:=\frac1s+\frac{d}s(\tilde\theta_n+\tilde\theta_{n-1})
\end{equation}
and
\begin{equation}
\label{E:2.4}
\tilde\theta_{2n+1}:=\frac1s+\frac{2d}s\tilde\theta_n.
\end{equation}
Then $n\mapsto\tilde\theta_n$ is non-negative and concave (on naturals) and, in fact,
\begin{equation}
\label{E:2.5}
\forall n\ge0\colon\quad\theta_n=\tilde\theta_n.
\end{equation}
\end{lemma}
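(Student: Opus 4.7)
The plan is to establish the three assertions in order: non-negativity, concavity, and the identification $\theta_n=\tilde\theta_n$. Non-negativity is immediate by induction on $n$, since \eqref{E:2.3}--\eqref{E:2.4} express $\tilde\theta_{n+1}$ as a non-negative combination of earlier terms plus $1/s>0$.

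The main step is concavity, which I would establish by tracking the first differences $\delta_n:=\tilde\theta_n-\tilde\theta_{n-1}$, $n\ge 1$. Subtracting adjacent pairs in \eqref{E:2.3}--\eqref{E:2.4} yields the one-term recursions $\delta_{2m+1}=(d/s)\delta_m$ and $\delta_{2m+2}=(d/s)\delta_{m+1}$, which may be compressed to $\delta_n=(d/s)\delta_{\lfloor n/2\rfloor}$ for $n\ge 2$, with $\delta_1=1/s$. Iterating and using $d<s$ gives the closed form $\delta_n=s^{-1}(d/s)^{\lfloor\log_2 n\rfloor}$, so $n\mapsto\delta_n$ is non-increasing, which is exactly concavity of $n\mapsto\tilde\theta_n$ on the naturals.

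For the identification, I would proceed by strong induction, the base case $\theta_0=\tilde\theta_0=0$ being given. Assuming $\theta_k=\tilde\theta_k$ for all $k\le n$, the recursion reads
\begin{equation*}
\theta_{n+1}=\tfrac{1}{s}+\tfrac{d}{s}\max_{0\le k\le n}\bigl(\tilde\theta_k+\tilde\theta_{n-k}\bigr).
\end{equation*}
Concavity forces the argmax to be the symmetric choice $k\in\{\lfloor n/2\rfloor,\lceil n/2\rceil\}$: for $k\le\lfloor(n-1)/2\rfloor$, one has $n-k\ge k+1$ and hence
\begin{equation*}
\bigl(\tilde\theta_{k+1}+\tilde\theta_{n-k-1}\bigr)-\bigl(\tilde\theta_k+\tilde\theta_{n-k}\bigr)=\delta_{k+1}-\delta_{n-k}\ge 0,
\end{equation*}
so the sum is non-decreasing up to the midpoint, and the other direction is identical by symmetry. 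Substituting the symmetric $k$ matches the two branches in the definition of $\tilde\theta_{n+1}$: when $n=2m$ we get $\theta_{n+1}=s^{-1}+(2d/s)\tilde\theta_m=\tilde\theta_{2m+1}$ via \eqref{E:2.4}, and when $n=2m+1$ we get $\theta_{n+1}=s^{-1}+(d/s)(\tilde\theta_m+\tilde\theta_{m+1})=\tilde\theta_{2m+2}$ via \eqref{E:2.3}, closing the induction.

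No step presents a genuine obstacle; the only thing requiring care is bookkeeping between parities of $n$ and $n+1$ so that the symmetric choice is matched with the correct branch of the definition. The payoff of the lemma is structural: it replaces the variational recursion for $\theta_n$ by a deterministic dyadic one, paving the way for the explicit closed form that drives the asymptotics in Theorem~\ref{thm-2}.
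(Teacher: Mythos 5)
Your proof is correct, and the overall architecture (non-negativity, then concavity, then induction to show the argmax sits at the midpoint) matches the paper. Where you diverge is in the concavity step: the paper tracks the \emph{second} differences $\tilde\theta_{n+1}+\tilde\theta_{n-1}-2\tilde\theta_n$, deriving a self-similar recursion for them and pushing the sign of the single base case $\tilde\theta_2+\tilde\theta_0-2\tilde\theta_1=(d-s)/s^2<0$ through by induction. You instead track the \emph{first} differences $\delta_n$, compress the two branch recursions into the single relation $\delta_n=(d/s)\delta_{\lfloor n/2\rfloor}$ for $n\ge2$, and iterate to the closed form $\delta_n=s^{-1}(d/s)^{\lfloor\log_2 n\rfloor}$. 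That closed form is a bit more information than the paper extracts at this stage: since $d<s$, monotonicity of $\delta_n$ (hence concavity) is immediate, and the explicit piecewise-geometric shape of $\delta_n$ already encodes the content of the paper's next lemma (Lemma~\ref{lemma-2}), namely that $k\mapsto\theta_k$ is piecewise linear between powers of $2$. Your argument for locating the argmax, via the telescoping comparison $\delta_{k+1}-\delta_{n-k}\ge0$ for $k\le\lfloor(n-1)/2\rfloor$, makes explicit what the paper dismisses with a bare ``by concavity,'' and the parity bookkeeping in matching the symmetric $k$ to the two branches of the definition is handled correctly. In short, same skeleton, a more computational and somewhat more self-contained treatment of the key monotonicity, at the cost of a formula the paper defers to the following lemma.
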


\begin{proofsect}{Proof}
Non-negativity is immediate from the recursive definition. For concavity on naturals we note that, by \twoeqref{E:2.3}{E:2.4},
\begin{equation}
\label{E:2.6}
\forall n\ge1\colon\quad\tilde\theta_{2n+1}+\tilde\theta_{2n-1}-2\tilde\theta_{2n}=0
\end{equation}
while
\begin{equation}
\label{E:2.7}
\forall n\ge1\colon\quad\tilde\theta_{2n}+\tilde\theta_{2n-2}-2\tilde\theta_{2n-1}=\frac ds(\tilde\theta_n+\tilde\theta_{n-2}-2\tilde\theta_{n-1}).
\end{equation}
Since
\begin{equation}
\tilde\theta_2+\tilde\theta_0-2\tilde\theta_1 = \frac1s\Bigl(1+\frac ds\Bigr)-\frac2s = \frac{d-s}{s^2}<0
\end{equation}
we get $\theta_{n+1}+\theta_{n-1}-2\theta_n\le0$ for all~$n\ge1$ by induction. 

In order to prove \eqref{E:2.5} we note that the statement holds for~$n=0$ so, aiming for a proof by induction, let us assume $\theta_k=\tilde\theta_k$ for~$k=0,\dots,n$. Then
\begin{equation}
\theta_{n+1}=\frac1s+\frac ds\max_{0\le k\le n}(\theta_k+\theta_{n-k})
=\frac1s+\frac ds\max_{0\le k\le n}(\tilde\theta_k+\tilde\theta_{n-k})
\end{equation}
and the second maximum equals $2\tilde\theta_k$ if~$n=2k$ and $\tilde\theta_k+\tilde\theta_{k-1}$ if~$n=2k-1$ by concavity of~$n\mapsto\tilde\theta_n$. From \twoeqref{E:2.3}{E:2.4} we get~$\theta_{n+1}=\tilde\theta_{n+1}$ and so \eqref{E:2.5} follows by induction.
\end{proofsect}

The observations made in the previous proof are strong enough to identify the sequence $\{\theta_n\}_{n\ge1}$ explicitly:

\begin{lemma}
\label{lemma-2}
Recall the notation $\gamma:=\frac s{2d}$. The following holds for all~$n\ge0$:
\begin{equation}
\label{E:2.10}
\theta_{2^n-1}=\frac1s\frac{1-\gamma^n}{1-\gamma}\gamma^{-n+1}
\end{equation}
and, for all integers~$k$ satisfying $2^n-1\le k\le 2^{n+1}-1$,
\begin{equation}
\label{E:2.11}
\theta_{k} = \frac{2^{n+1}-1-k}{2^n}\,\theta_{2^n-1}+\frac{k-2^{n}+1}{2^n}\,\theta_{2^{n+1}-1}.
\end{equation}
In short, $k\mapsto\theta_k$ is piecewise linear with explicit values for~$k\in\{2^n-1\colon n\ge0\}$.
\end{lemma}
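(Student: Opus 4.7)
My plan is to read off the structure of the sequence by analyzing its consecutive differences, which the computations in Lemma~\ref{lemma-3.2} already make tractable. Set $d_k:=\theta_k-\theta_{k-1}$ for $k\ge1$. Subtracting the two cases of the recursion \twoeqref{E:2.3}{E:2.4} (and using $\theta=\tilde\theta$) gives
\begin{equation}
d_{2n+1}=\theta_{2n+1}-\theta_{2n}=\frac{d}{s}(\theta_n-\theta_{n-1})=\frac{d}{s}d_n,
\end{equation}
and similarly $d_{2n}=\frac{d}{s}d_n$, so that $d_{2n}=d_{2n+1}=\frac{1}{2\gamma}d_n$ (noting $d/s=1/(2\gamma)$). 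This is essentially just \eqref{E:2.6} restated as an identity on differences, coupled with its even analog.

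Starting from $d_1=\theta_1-\theta_0=1/s$, a direct induction on $n$ then shows that
\begin{equation}
\forall n\ge0\,\forall k\in\{2^n,\dots,2^{n+1}-1\}\colon\quad d_k=\frac{1}{s(2\gamma)^n}.
\end{equation}
In other words, the difference sequence is constant on each ``dyadic block'' $\{2^n,\dots,2^{n+1}-1\}$, which is exactly the piecewise-linearity statement \eqref{E:2.11}: for $2^n-1\le k\le 2^{n+1}-1$ we have $\theta_k=\theta_{2^n-1}+(k-2^n+1)/(s(2\gamma)^n)$, and rewriting the slope via
\begin{equation}
\theta_{2^{n+1}-1}-\theta_{2^n-1}\,=\,2^n\cdot\frac{1}{s(2\gamma)^n}\,=\,\frac{\gamma^{-n}}{s}
\end{equation}
puts this identity into the convex-combination form displayed in \eqref{E:2.11}.

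For \eqref{E:2.10}, I would use that $2^{n+1}-1=2(2^n-1)+1$, so by \eqref{E:2.4},
\begin{equation}
\theta_{2^{n+1}-1}=\frac1s+\frac{2d}{s}\theta_{2^n-1}=\frac{1}{s}+\gamma^{-1}\theta_{2^n-1}.
\end{equation}
With $a_n:=\theta_{2^n-1}$ and $a_0=0$, this inhomogeneous linear recursion has the geometric-series solution
\begin{equation}
a_n=\frac{1}{s}\sum_{j=0}^{n-1}\gamma^{-j}=\frac{1}{s}\cdot\frac{\gamma^{-n}-1}{\gamma^{-1}-1}=\frac{1}{s}\,\frac{1-\gamma^n}{1-\gamma}\,\gamma^{-n+1},
\end{equation}
which is \eqref{E:2.10}. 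I do not foresee a genuine obstacle here; the only modest subtlety is to verify, as a consistency check, that the slope $\gamma^{-n}/s$ obtained from telescoping the differences matches the finite difference $a_{n+1}-a_n$ of the closed form \eqref{E:2.10}, which is a one-line algebraic identity. Everything else is a direct induction on top of Lemma~\ref{lemma-3.2}.
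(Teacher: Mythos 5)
Your proof is correct and follows essentially the same route as the paper: both derive \eqref{E:2.10} from the recursion $\theta_{2^{n+1}-1}=\frac1s+\gamma^{-1}\theta_{2^n-1}$, and both establish \eqref{E:2.11} by an induction on the dyadic scale using the halving relations \twoeqref{E:2.3}{E:2.4} from Lemma~\ref{lemma-3.2}. The only difference is bookkeeping: the paper works with second differences (showing they vanish in the interior of each dyadic block via \eqref{E:2.6} and \eqref{E:2.7}), whereas you work with first differences~$d_k$, showing $d_{2n}=d_{2n+1}=\tfrac ds d_n$ directly and obtaining the explicit block-constant value $d_k=\tfrac1{s(2\gamma)^n}$.
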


\begin{proofsect}{Proof}
We start with the explicit values. Note that $2^n-1$ is odd and $2(2^n-1)+1=2^{n+1}-1$. From \twoeqref{E:2.4}{E:2.5} we thus get
\begin{equation}
\theta_{2^{n+1}-1} = \frac1s+\gamma^{-1}\theta_{2^n-1} \,.
\end{equation}
Since \eqref{E:2.10} gives the correct value for~$n=0$, we get \eqref{E:2.10} for all $n\ge0$ by induction.

For \eqref{E:2.11} it suffices to prove that, for all~$n\ge1$,
\begin{equation}
\label{E:2.13}
\forall k\in\{2^n+1,\dots 2^{n+1}-1\}\colon
\quad
\theta_k-\theta_{k-1} = \theta_{k-1}-\theta_{k-2}
\end{equation}
because this shows that~$k\mapsto\theta_k$ is linear for $2^n-1\le k\le 2^{n+1}-1$ and so \eqref{E:2.11} follows from \eqref{E:2.10}. To show \eqref{E:2.13} note that, for~$k$ odd, the equality follows directly from \eqref{E:2.6} (and \eqref{E:2.5}). For~$k$ even, writing~$k=2\ell$ for some integer~$\ell$,  \eqref{E:2.7} (and \eqref{E:2.5}) tells us
\begin{equation}
\theta_k-\theta_{k-1} -(\theta_{k-1}-\theta_{k-2}) = \frac ds\Bigl(\theta_\ell-\theta_{\ell-1}-(\theta_{\ell-1}-\theta_{\ell-2})\Bigr).
\end{equation}
Since $2^n<k\le 2^{n+1}-1$ implies $2^{n-1}<\ell\le 2^n-1$, the right-hand side vanishes assuming that \eqref{E:2.13} holds for~$n-1$. Since \eqref{E:2.13} for~$n=1$ boils down to \eqref{E:2.6}, we get \eqref{E:2.13} for all~$n\ge1$ by induction.
\end{proofsect}

We will now present a calculation that determines the asymptotic in the main theorem based on the assumption that, for any~$x\in\Z^d\smallsetminus\{0\}$,
\begin{equation}
\label{E:3.17a}
D(0,x\beta^{\theta_n})\,\sim\, n
\end{equation}
where ``$\sim$'' means ``the ratio of the quantities tends to one in probability'' in the limits $n\to\infty$ followed by~$\beta\to\infty$. This assumption restates the conclusion \eqref{E:2.3a} of our heuristic reasoning while allowing for sublinear corrections.

First note that, for $\lambda\in[0,1]$ such that $2^n\lambda$ is an integer, \eqref{E:3.17a} yields
\begin{equation}
\label{E:3.18a}
D(0,x\beta^{\theta_{\lambda 2^n+(1-\lambda)2^{n+1}}}) \sim \lambda 2^n+(1-\lambda)2^{n+1}=\bigl[\lambda+(1-\lambda)2\bigr]2^n.
\end{equation}
Lemma~\ref{lemma-2} along with $\gamma^n\to0$ and $(2\gamma)^{-n}\to0$  in turn  give
\begin{equation}
\label{E:2.15}
\theta_{\lambda 2^n+(1-\lambda)2^{n+1}} \sim \lambda\theta_{2^n}+(1-\lambda)\theta_{2^{n+1}}
\sim\bigl[\lambda+(1-\lambda)\gamma^{-1}\bigr]\frac{1}{2d-s}\gamma^{-n}.
\end{equation} 
Theorem~\ref{thm-1} shows $D(0,x r^{\gamma^{-n}})/2^n\to L_\beta(r):=\phi_\beta(r)(\log r)^\Delta$ in measure as $n\to\infty$ provided~$\beta$ is not one of the exceptional values (which we can ignore thanks to the monotonicity of $\beta\mapsto L_\beta(r)$). From \twoeqref{E:3.18a}{E:2.15} and the continuity of~$r\mapsto\phi_\beta(r)$ we then get
\begin{equation}
\label{E:2.17}
L_\beta\bigl(\beta^{[\lambda+(1-\lambda)\gamma^{-1}]\frac1{2d-s}}\bigr)\sim \lambda+(1-\lambda)2
\end{equation}
and so
\begin{equation}
\label{E:2.23}
\phi_\beta\bigl(\beta^{[\lambda+(1-\lambda)\gamma^{-1}]\frac1{2d-s}}\bigr) \sim \frac{\lambda+(1-\lambda)2}{\,[\lambda+(1-\lambda)\gamma^{-1}]^\Delta}(2d-s)^\Delta\,\frac1{(\log\beta)^\Delta}.
\end{equation}
Writing $u(\beta)$ for the unique number in $[1,\gamma^{-1})$ such that $\beta=\texte^{u(\beta)\gamma^{-m(\beta)}}$ for a suitable integer~$m(\beta)$ (see \eqref{E:1.8a}),
the log-log-periodicity of~$\phi_\beta$ in \eqref{E:1.3a} then tells us
\begin{equation}
\label{E:2.24a}
\phi_\beta\bigl(\beta^{[\lambda+(1-\lambda)\gamma^{-1}]\frac1{2d-s}}\bigr) = \phi_\beta\bigl(\texte^{[\lambda+(1-\lambda)\gamma^{-1}]\frac {u(\beta)}{2d-s}}\bigr).
\end{equation}
Finally, let $t\in[0,1]$ be  the unique number  such that
\begin{equation}
\lambda+(1-\lambda)\gamma^{-1} = \gamma^{-t}.
\end{equation}
This is solved for~$\lambda$ by
\begin{equation}
\label{E:2.26}
\lambda = \frac{2d}{2d-s}-\frac s{2d-s}\gamma^{-t}.
\end{equation}
Using $\gamma^{-\Delta}=2$ we get $[\lambda+(1-\lambda)\gamma^{-1}]^\Delta=\gamma^{-t\Delta}=2^t$ while \eqref{E:2.26} shows
\begin{equation}
\lambda+(1-\lambda)2 = 2-\lambda = \frac{s}{2d-s}\gamma^{-t}-2\frac{s-d}{2d-s}.
\end{equation}
Inserting this into \twoeqref{E:2.23}{E:2.24a}, we get that, for all $t\in[0,1]$,
\begin{equation}
\label{E:2.24}
\phi_\beta\bigl(\texte^{\gamma^{-t}\frac {u(\beta)}{2d-s}}\bigr)\sim 2^{-t}\Bigl[\frac{s}{2d-s}\gamma^{-t}-2\frac{s-d}{2d-s}\Bigr]\,(2d-s)^\Delta\,\frac1{(\log\beta)^\Delta}.
\end{equation}
Writing the left-hand side using~$\psi_\beta$, we obtain \eqref{E:1.5}.

\subsection{Remarks and connections}
We proceed with some remarks on directions of possible future study as well as pointers to  relevant literature.

\smallskip
\noindent
(1) \textsl{Non-constancy for all~$\beta$ and extension to percolation setting}: Our proof of non-const\-ancy of~$\phi_\beta$ applies only to large~$\beta$ but we expect~$\phi_\beta$ to be non-constant for all $\beta>0$. An interesting starting point could be the $\beta\downarrow0$ asymptotic of~$\phi_\beta$, for which the continuum limit analyzed in \cite{Biskup-Lin} should be quite relevant.

Another extension concerns replacing the requirement that $\frakp_\beta(\cdot,\cdot)=1$ for  nearest  neighbors by the requirement that the graph contain an infinite connected component. We expect the asymptotic \eqref{E:1.4a} to take place here as well but several parts of the proof require new arguments.

\smallskip
\noindent
(2) \textsl{Subleading terms and ``shape theorem''}: The fact that~$\phi_\beta$ is non-constant complicates the ultimate goal of the whole sequence of works~\cite{B1,B2,Biskup-Lin}, which is to prove a ``shape theorem'' for balls of very large radii in the intrinsic (i.e., graph-theoretical or chemical) distance. Shape theorems lie at the core of the study of the First Passage Percolation; cf.\  Auffinger, Damron and~Hanson~\cite{50years-FPP}. The difficulty of the present situation is that the limit  of $D(0,nx)/L_\beta(n)$ as $n\to\infty$ is independent of~$x$ (as long as $x\ne0$).  The ``shape'' of the intrinsic ball, if there is  in fact  one at all, is  thus  determined by terms beyond the leading-order scale. 

In \cite[Conjecture~1.5]{Biskup-Lin}, a proposal for the relevant $x$-dependent second-order term was made but that only under the assumption that~$\phi_\beta$ is constant. We expect that the oscillations of~$\phi_\beta$ will contribute another such term, albeit perhaps of a smaller order. 

\smallskip
\noindent
(3) \textsl{Diameter scaling}: Another interesting question is the asymptotic scaling of the intrinsic diameter of large sets, e.g., lattice boxes or Euclidean balls. Our control of the point-to-point distance is too weak to rule out exceptional points --- which do exist, e.g., at endpoints (or points nearby) certain long edges. The main result of~\cite{B2} shows that the polylogarithmic exponent remains in effect for the diameter as well; the question is whether exceptional events may lead to sub-logarithmic corrections. 

\smallskip
\noindent
(4) \textsl{Other aspects of $d<s<2d$ regime}: Long range percolation in the regime of exponents considered in the present paper is attractive for other reasons than just those explored here. One of these is connectivity as a percolation model (again, dropping the requirement that~$\frakp(x)=1$ for~$x$ being a nearest neighbor). Here the $d<s<2d$ regime of \eqref{E:1.1} identifies a robust family of percolation models for which we have a proof of no percolation at criticality; see Berger~\cite{Berger-RW} and the recent work of Hutchcroft~\cite{Hutchcroft}. 

A somewhat opposite situation occurs for random walks on long-range percolation graphs (even with nearest-neighbor edges present). There an invariance principle (i.e., scaling to non-degenerate Brownian motion) is expected to hold for all exponents $s>d+2$ yet the method of proof breaks down when $s\le 2d$ due to the fact that the so called corrector fails to be sublinear everywhere (Biskup, Chen, Kumagai and Wang~\cite{BCKW}). The geometric aspects of long-range percolation such as those studied here will likely play an important role in extending the proof of  the  invariance principle to all exponents $s>d+2$. We refer to, e.g., Berger~\cite{Berger-RW}, Benjamini, Berger and Yadin~\cite{BBY}, Crawford and Sly~\cite{CS1,CS2}, Misumi~\cite{Misumi}, Kumagai and Misumi~\cite{KM}, Can, Croydon and Kumagai~\cite{CCK} for studies of random walk in long-range percolation setting and further connections.

\smallskip
\noindent
(5) \textsl{Inhomogenous percolation models}: In~\cite{DvdHH}, Deijfen, van der Hofstad and Hooghiemstra introduced an inhomogeneous version of the long-range percolation model where an edge between~$x$ and~$y$ is added with probability
\begin{equation}
1-\exp\Bigl\{-\beta\frac{w_x w_y}{|x-y|^s}\Bigr\},
\end{equation}
for a given collection~$\{w_x\}_{x\in\Z^d}$ of non-negative i.i.d.\ random weights. The main novelty here is that, by tuning the law of the $w$'s ---  specifically,  choosing it heavy tailed with a suitable exponent --- one can make the degree distribution of the graph ``scale free,'' which is an aspect relevant for real-life networks.

The appearance of another tunable parameter --- namely, the distribution of the~$w$'s or the relevant exponent therein  --- makes the ``phase diagram'' of the model more intricate (see Deprez, Hazra and W\"ut\-trich~\cite{DHW}, Heydenreich, Hulshof and Jorritsma~\cite{HHJ}, Hao and Heidenreich~\cite{HH}) although the five basic regimes of behavior outlined early in this section persist. It is of interest to explore whether the sharp leading-order asymptotic of the distance established here and \cite{Biskup-Lin} extend to the inhomogeneous case as well.

Another modification of our model comes in the work of Chatterjee and Dey~\cite{CD} in which an edge between~$x$ and~$y$ is assigned an exponentially distributed passage time with mean $|x-y|^{+s}$; one is then interested in the minimal passage time in paths connecting two vertices. Also here the regime $d<s<2d$ is of significance, being marked by stretched-exponential growth of the passage time with the $\ell^1$-distance. A novelty here is the appearance of an additional regime $2d<s<2d+1$, in which the passage time grows polynomially (as opposed to linear growth that resumes for $s>2d+1$).

\section{Proof of Theorem~\ref{thm-1}}
\noindent
The proof of Theorem~\ref{thm-1} follows closely that of its continuum predecessor \cite[Theorem~1.2]{Biskup-Lin}. Many steps of the proof can in fact be taken over nearly \emph{verbatim}; the main novelty is the need for a coupling between the lattice and continuum edge processes and an argument by-passing  potential  discontinuity points of $\beta\mapsto\phi_\beta(r)$.

\subsection{Subadditivity inequality}
 Fix $d\ge1$, $s\in(d,2d)$, $\beta>0$ and~$\frakq$ satisfying \eqref{E:1.1} throughout the rest of this section. 
Given a sample of the percolation graph  on~$\Z^d$,  let~$\scrE$ denote for the set of all occupied edges, of both orientations and including the nearest-neighbor ones, contained therein.
 Echoing  definition~(2.1) of~\cite{Biskup-Lin}  we  introduce $\wt D\colon \Z^d \times \Z^d \to \Z$ via
\begin{equation} 
\label{E:dist-1}
    \wt D(x,y) := \inf \left\{ n \ge 0 \colon \,
        \begin{aligned}
            &\{(x_{k-1},x_k) \colon k=1,\dotsc,n\} \subseteq \scrE , \:
            x_0 = x, \:
            \\
            &x_n = y, \,\forall k=1,\dots, n\colon\, |x_k-x|<2|x-y|_1 \\
        \end{aligned} \right\} .
\end{equation}
We will refer to $\wt D(x,y)$ as the \emph{restricted distance} from~$x$ to~$y$ as it is non-negative, strictly positive for~$x \ne y$ and arises by optimizing lengths of paths, although~$\wt D$ is not a distance in proper sense as it is not symmetric in general. What matters in the sequel is
\begin{equation}
\label{E:3.2}
\forall x,y\in\Z^d\colon\quad D(x,y)\le\wt D(x,y)\le|x-y|_1
\end{equation}
and the fact that the law of~$\wt D$ is translation invariant with
\begin{equation} \label{E:restr-indep}
    \forall x,y,x',y' \in \Z^d \colon \:
    |x-x'|_1 > 2|x-y|_1 + 2|x' - y'|_1
    \,\,\Rightarrow\,\,
    \wt D(x,y) \independent \wt D(x', y') .
\end{equation}
Here and henceforth $|\cdot|_1$ denotes the $\ell^1$-norm on~$\R^d$.

Let $\lfloor x\rfloor$, for $x\in\R^d$, denote the unique~$z\in\Z^d$ such that $x-z\in [0,1)^d$.
The independence property \eqref{E:restr-indep} enabled by the consideration of the restricted distance permits us to prove the following analogue of~\cite[Proposition~2.7]{Biskup-Lin} that drives the bulk of the subsequent derivations in this paper. 

\begin{proposition}[Subadditivity inequality] 
\label{P:subadd}
Fix~$\eta \in (0,1)$ and $\overline\gamma\in(\gamma,1)$. Let~$Z,Z'$ be i.i.d.~$\R^d$-valued random variables
with common law given by
\begin{equation} 
\label{E:z-law}
    P (Z \in B) = \sqrt{\eta \be} \int_B \texte^{-\eta \be c_0 |z|^{2d}} \textd z ,
\end{equation}
where
\begin{equation}
\label{E:3.5}
    c_0 := \int 1_{\{|z|^{2d} + |\tilde z|^{2d} \le 1\}}\,\textd z\textd\tilde z .
\end{equation}
Let~$\wt D'$ be an independent copy of~$\wt D$
with~$\wt D$ and~$\wt D'$ assumed independent of~$Z$ and~$Z'$.
For each $\gamma_1,\gamma_2 \in (0,\overline\gamma)$ with~$\gamma_1+\gamma_2=2\gamma=s/d$,
there are~$c_1,c_2 \in (0,\infty)$ and, for each~$x \in \Z^d$,
there is an event~$A(x) \in \sigma(Z,Z')$ such that
\begin{equation} \label{E:subadd}
    \wt D(0,x)
    \,\overset{\text{\rm law}}\le\, \wt D \bigl( 0, \lfloor |x|^{\gamma_1} Z \rfloor \bigr)
    + \wt D' \bigl( 0, \lfloor |x|^{\gamma_2} Z' \rfloor \bigr)
    + 1 + |x|_1 1_{A(x)}
\end{equation} 
and
\begin{equation} 
\label{E:A-prob}
    P \bigl( A(x) \bigr) \le c_1 \texte^{-c_2 |x|^\vartheta}
\end{equation}
hold with~$\vartheta := 2d[\overline\gamma - \max\{\gamma_1,\gamma_2\}]$.
\end{proposition}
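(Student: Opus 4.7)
The plan is to adapt the continuum proof of \cite[Prop.~2.7]{Biskup-Lin} to the lattice, the key new ingredient being a coupling of the Bernoulli edge variables on $\Z^d$ to a continuum Poisson process whose minimizer has the claimed product density. Set $R_i := |x|^{\gamma_i}$ and consider edges $\{u,v\}$ with $u \in B_1 := B(0,|x|^{\overline\gamma}) \cap \Z^d$ and $v \in B_2 := x + (B(0,|x|^{\overline\gamma}) \cap \Z^d)$. By \eqref{E:1.0}--\eqref{E:1.1} one has $\beta\frakq(u-v) = \beta |x|^{-s}(1+o(1))$ uniformly over such $(u,v)$, so after a slight deflation (absorbing the $o(1)$ into the factor $\eta<1$) the set of occupied edges stochastically dominates a Poisson point process on $B_1\times B_2$ whose intensity at $(u,v)$ is at least $\eta\beta\frakq(u-v)$. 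In the rescaled coordinates $z := u/R_1$, $\tilde z := (v-x)/R_2$, the relation $\gamma_1+\gamma_2 = s/d$ together with $|u-v|\sim|x|$ makes this rescaled intensity tend to the uniform constant $\eta\beta$ on $\R^d\times\R^d$ as $|x|\to\infty$, while the admissible region $\{|z|\le|x|^{\overline\gamma-\gamma_1},\,|\tilde z|\le|x|^{\overline\gamma-\gamma_2}\}$ exhausts the whole space.

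Among all points of this Poisson process, select the one $(z_*,\tilde z_*)$ minimizing $f(z,\tilde z):=|z|^{2d}+|\tilde z|^{2d}$. The volume identity $|\{f\le r\}|=c_0 r$ from \eqref{E:3.5} together with the standard computation for the minimum of a real-valued functional of a Poisson process shows that $(z_*,\tilde z_*)$ has joint density proportional to $\exp(-\eta\beta c_0(|z|^{2d}+|\tilde z|^{2d}))$. Crucially this density \emph{factorizes}, so $z_*$ and $\tilde z_*$ are independent, each with the marginal density of \eqref{E:z-law}. Let $A(x)$ be the event that either the cutoff region contains no Poisson point or that the selected minimizer falls outside it; by the Poisson tail combined with the doubly-exponential decay of the marginals in \eqref{E:z-law}, one obtains $P(A(x))\le c_1\exp(-c_2|x|^\vartheta)$ with $\vartheta = 2d[\overline\gamma-\max(\gamma_1,\gamma_2)]$, the binding constraint coming from the tighter rescaled cutoff corresponding to the larger of $\gamma_1,\gamma_2$.

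On $A(x)^c$, let $u_*\in B_1$ and $v_*\in B_2$ be the edge endpoints matching the minimizer. Concatenating the shortest restricted paths $0\to u_*$ and $v_*\to x$ with the edge $\{u_*,v_*\}$ yields $\wt D(0,x)\le\wt D(0,u_*)+1+\wt D(v_*,x)$; the range constraint in \eqref{E:dist-1} is satisfied because $|x|^{\overline\gamma}\ll|x|_1$ for $|x|$ large. By \eqref{E:restr-indep} the two restricted distances are independent (the range-restriction balls around $0$ and $x$ are disjoint for $|x|$ large), so by translation invariance $\wt D(v_*,x)$ can be realized as $\wt D'(0,x-v_*)$ for an independent copy $\wt D'$; using the symmetry of the density of $Z'$, we then couple $u_*=\lfloor R_1 Z\rfloor$ and $x-v_*=\lfloor R_2 Z'\rfloor$ in distribution, delivering \eqref{E:subadd}. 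On $A(x)$, the trivial bound $\wt D(0,x)\le|x|_1$ from \eqref{E:3.2} accounts for the $|x|_1 1_{A(x)}$ term. The main obstacle I anticipate is rigorously executing the Poissonization: one must dominate the discrete Bernoulli edges by a Poisson process whose rescaled intensity converges uniformly to $\eta\beta$, simultaneously absorb the $o(1)$ from \eqref{E:1.1} and the rounding errors from the lattice floors into the deflation factor $\eta$, and preserve the exact factorization that produces the independence of $z_*$ and $\tilde z_*$.
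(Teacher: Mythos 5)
Your proposal follows essentially the same route as the paper: Poissonize the lattice edges between the two balls, select the edge minimizing $|z|^{2d}+|\tilde z|^{2d}$ in rescaled coordinates, factorize the density of the minimizer using the $2d$-power structure and~\eqref{E:3.5}, and declare $A(x)$ to be the escape of the minimizer from the admissible cutoff region. The skeleton is correct and, in particular, you correctly identify that $\vartheta = 2d[\overline\gamma-\max\{\gamma_1,\gamma_2\}]$ is forced by the tighter cutoff.

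There is, however, a genuine gap in how you obtain the exact law~\eqref{E:z-law}. You minimize $f$ over the Poisson process living on the cutoff region $B_1\times B_2$ and then assert that $(z_*,\tilde z_*)$ has joint density proportional to $\exp(-\eta\beta c_0(|z|^{2d}+|\tilde z|^{2d}))$. That formula is exact only for the minimizer over a Poisson process that is \emph{homogeneous on all of} $\R^d\times\R^d$; over the truncated process the density is supported on the cutoff region and does not factorize. The phrase ``the admissible region exhausts the whole space'' only yields the density asymptotically, whereas the proposition needs it exactly (so that $Z,Z'$ are genuinely independent with law \eqref{E:z-law} for every~$x$). The paper's fix is to introduce an independent ``filler'' Poisson process $\scrI'$ whose intensity is the complement of the coupled part, so that $\scrI\cup\scrI'$ is exactly the homogeneous Poisson process of intensity $\eta\beta|x|^{-s}$; the minimizer is then taken over $\scrI\cup\scrI'$, the factorization is exact, and $A(x)$ is precisely the event that this exact minimizer lands in the $\scrI'$ part (i.e.\ outside the cutoff). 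Relatedly, the deflation by $\eta<1$ is used to make the \emph{unit-cube} mass of the continuum intensity $\eta\beta|x|^{-s}$ fall below $\frakp_\beta(\lfloor\tilde x\rfloor-\lfloor\tilde y\rfloor)$, enabling a pointwise coupling in which flooring a Poisson point gives an occupied edge; ``stochastic domination'' of the occupied edges over a Poisson field is not the right statement (a Bernoulli field cannot dominate a Poisson field with multiplicities). Your stated ``main obstacle'' about the rescaled intensity \emph{converging} to $\eta\beta$ is therefore a red herring: the construction makes the intensity exact, not approximate, and that exactness is what the factorization and the fixed law \eqref{E:z-law} require.
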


\begin{proofsect}{Proof}
Fix $\eta\in(0,1)$, $\overline\gamma\in(\gamma,1)$ and $\gamma_1,\gamma_2\in(0,\overline\gamma)$ with $\gamma_1+\gamma_2=2\gamma$. Let~$x\in\Z^d$.  There is nothing to prove when $x=0$ so let us assume $x\ne0$. 
Following the overall strategy of the proof in~\cite{Biskup-Lin}, consider Borel measures~$\mu$ and~$\mu'$ on~$\R^d\times\R^d$ defined by
\begin{equation}
\label{E:3.8}
    \mu(\textd \tilde x \textd \tilde y) := \eta \be 1_{\{|\tilde x| < |\tilde y|\}}
        1_{\{|\tilde x| \vee |\tilde y-x| \le |x|^{\overline\gamma}\}}
        \frac{\textd \tilde x \textd \tilde y}{|x|^s}
\end{equation}
and
\begin{equation}
\label{E:3.9}
    \mu'(\textd \tilde x \textd \tilde y) := \eta \beta \frac{\textd \tilde x \textd\tilde y}{|x|^s} - \mu(\textd \tilde x \textd \tilde y).  
    \end{equation}
Next observe that, for~$|x|$ larger than an $\eta$-dependent constant, for any~$(\tilde x,\tilde y) \in \R^d\times\R^d$,
the inequalities~$|\tilde x| \le |\tilde y|$, $|\tilde x| \le |x|^{\overline\gamma}$,
and~$|\tilde y-x| \le |x|^{\overline\gamma}$ imply
\begin{equation}
\label{E:3.12}
        \bigl( \tfrac{1+\eta}2 \bigr)^{1/s} \, |x| <|\tilde x - \tilde y|<
        \bigl( \tfrac{1+\eta}2 \bigr)^{-1/s} \, |x| 
\end{equation}
and so, by the inequality on the right,  for all $\tilde x,\tilde y\in\Z^d$, 
\begin{equation} \label{E:dis-ctx-couple}
    \mu \Bigl( \bigl( \lfloor\tilde x\rfloor + [0,1)^d \bigr)
        \times \bigl( \lfloor\tilde y\rfloor + [0,1)^d \bigr) \Bigr)
    \le \frac{\eta\be}{|x|^s}
    \le \frac\eta{\bigl( \tfrac{1+\eta}2 \bigr)} \, \frac{\be}{|\tilde x-\tilde y|^s}. 
\end{equation}
By \twoeqref{E:1.0}{E:1.1}, the left-inequality in \eqref{E:3.12} and $\eta (\tfrac{1+\eta}2)^{-1} < 1$, this is less than~$\frakp_\be(\lfloor\tilde x\rfloor-\lfloor\tilde y\rfloor)$ as soon as~$|x|$  exceeds a constant that depends on~$\eta$, $\beta$ and~$\frakq$. Under these circumstances we can  couple a Poisson point process~$\scrI$ with intensity measure~$\mu'$ to the discrete edge set~$\scrE$ so that
\begin{equation}
\label{E:3.11}
\forall (\tilde x,\tilde y) \in \scrI\colon\,\, (\lfloor \tilde x \rfloor, \lfloor \tilde y \rfloor) \in \scrE
\end{equation}
holds pointwise and, by \eqref{E:restr-indep} and  the  restriction built into the definition of~$\wt D$, the families
\begin{equation}
\label{E:3.13}
\bigl\{\wt D(0,\lfloor\tilde x\rfloor)\colon |\tilde x|\le |x|^{\overline\gamma}\bigr\},\,\bigl\{\wt D(x,\lfloor\tilde y\rfloor)\colon |\tilde y-x| \le |x|^{\overline\gamma}\bigr\},\,\scrI
\end{equation}
are independent.

Let~$\scrI'$ be a Poisson point process with intensity measure~$\mu'$ independent of~$\scrI$ and~$\scrE$. Then~$\scrI \cup \scrI'$ is a homogeneous Poisson
process with intensity~$\eta \be |x|^{-s} \in (0,\infty)$ and, as is readily checked, there is almost surely a unique pair~$(X,Y) \in \scrI \cup \scrI'$ that minimizes the function
\begin{equation}
\label{E:3.14}
    f_x(\tilde x, \tilde y) := \bigl( |x|^{-\gamma_1} |\tilde x| \bigr)^{2d}
    +  \bigl( |x|^{-\gamma_2} |\tilde y-x| \bigr)^{2d} .
\end{equation}
The joint law of~$X$ and~$Y$ can be computed explicitly
\begin{equation}
\label{E:3.14a}
P\bigl((X,Y)\in B\bigr) = \frac{\eta\beta}{|x|^s}\int_B\exp\Bigl\{-\frac{\eta\beta}{|x|^s}\int 1_{\{f_x( x', y')\le f_x(\tilde x,\tilde y)\}}\textd x'\textd y'\Bigr\}\,\textd\tilde x\textd\tilde y.
\end{equation}
The random variables
\begin{equation}
\label{E:3.15}
    Z := |x|^{-\gamma_1} X
    \quad \text{and} \quad
    Z' := |x|^{-\gamma_2} (Y-x)
\end{equation}
then have the joint law
\begin{equation}
P\bigl((Z,Z')\in B\bigr) = \frac{\eta\beta}{|x|^{\zeta}}\int_B\exp\biggl\{-\frac{\eta\beta}{|x|^{\zeta}}\int 1_{\{|\tilde z|^{2d}+|\tilde z'|^{2d}\le|z|^{2d}+|z'|^{2d}\}}\textd\tilde z\textd\tilde z'\biggr\}\textd z\textd z'\,,
\end{equation}
 where  $\zeta:= s-d(\gamma_1+\gamma_2)$. Scaling~$\tilde z$ and~$\tilde z'$ by $(|z|^{2d}+|z'|^{2d})^{1/d}$, the  inner  integral is shown to equal $c_0(|z|^{2d}+|z'|^{2d})$   thus turning the outer integral into one with respect to a product measure.  (This is where using $2d$-powers in \eqref{E:3.14} is crucial.)
 Noting that  $\zeta = 0$ by assumption, $Z$ and~$Z'$ are independent with above law. 

Next define the event~$A(x)$ as follows:
When~$|x|$ is large enough (with  the exact restrictions including the bounds  mentioned above  as well as those noted  below), set
\begin{equation}
\label{E:3.16}
    A(x) := \bigl\{ |Z| > |x|^{\overline\gamma - \gamma_1} \bigr\} \cup
        \bigl\{ |Z'| > |x|^{\overline\gamma - \gamma_2} \bigr\} 
\end{equation}
and let $A(x)$ be the entire probability space otherwise. 
On the event~$A(x)^\cc$ the edge~$(X,Y)$ lies in~$\scrI$
since~$|X| \vee |Y-x| \le |x|^{\overline\gamma}$ and so $(\lfloor X\rfloor,\lfloor Y\rfloor)\in\scrE$ by \eqref{E:3.11}. Moreover, both~$X$ and~$Y$ are within distance~$2|x|$ of the origin
(as long as~$|x|$ is large enough; this is part of the bounds on~$|x|$). 
Recalling the notation $B(y,r):=\{z\in\R^d\colon |z-y|<r\}$, similar arithmetic as in~\cite[eq.~(2.29) and~(2.30)]{Biskup-Lin}  shows 
\begin{equation}
B\bigl(0,2|\lfloor X\rfloor|\bigr) \subseteq B\bigl(0,2|x|\bigr)\,\,\wedge\,\,B\bigl(x,2|\lfloor Y\rfloor-x|\bigr) \subseteq B\bigl(0,2|x|\bigr).
\end{equation}
Picking a path achieving $\wt D(0,\lfloor X\rfloor)$, concatenating it with edge $(\lfloor X\rfloor,\lfloor Y\rfloor)$ and a path achieving $\wt D(x,\lfloor Y\rfloor)$ then produces a path in $B(0,2|x|)$ whose length dominates the restricted distance $\wt D(0,x)$. 

Using  \eqref{E:3.2} to bound $\wt D(0,x)$ by $|x|_11_{A(x)}$ when~$A(x)$ occurs, this yields the pointwise inequality
\begin{equation}
    \wt D(0,x)
    \le \wt D\bigl( 0,\lfloor |x|^{\gamma_1} Z\rfloor \bigr)
    + \wt D \bigl( x, x + \lfloor |x|^{\gamma_2} Z' \rfloor \bigr)
    + 1 + |x|_1 1_{A(x)} .
\end{equation}
In light of \eqref{E:3.13}, the two instances of~$\wt D$ on the right can be regarded as independent of each other and of the variables~$Z$ and~$Z'$. Invoking translation invariance of the law of~$\wt D$, the proof is reduced to \eqref{E:A-prob}. This follows readily from \eqref{E:3.16} and \eqref{E:z-law}.
\end{proofsect}

\subsection{Convergence for restricted distance}
The next several steps hew closely to the original  argument from \cite{Biskup-Lin}. 
Indeed, taking expectation in \eqref{E:subadd} with $\gamma_1=\gamma_2=\gamma$ gives
\begin{equation}
\label{E:3.19}
E\wt D(0,x)\le 2 E\wt D\bigl(0,\lfloor |x|^\gamma Z\rfloor\bigr)+1+|x|_1 P\bigl(A(x)\bigr).
\end{equation}
In order to unite the arguments in the two expectations and get an expression that can be iterated, we replace~$x$ by the random variable
\begin{equation} \label{E:w-def}
    W := Z_0 \prod_{k=1}^\infty |Z_k|^{\ga^k} ,
\end{equation}
where~$Z_0,Z_1,\dotsc$ are i.i.d.\ copies of~$Z$. As shown in \cite[Lemma~3.1]{Biskup-Lin}, the infinite product converges and $W\in(0,\infty)$ a.s., with~$W$ admitting a continuous, a.e.-non-vanishing probability density and finite moments of all orders. Noting that for~$W$ and~$Z$ independent we get $|W|^\ga Z \laweq W$, taking~$W$ independent of the~$\wt D$'s then yields
\begin{equation}
E\wt D(0,rW)\le 2E\wt D(0,r^\gamma W)+c
\end{equation}
for $c:=1+\sup_{x\in\R^d}|x|_1P(A(x))$. This implies the existence of the limit
\begin{equation} \label{E:L-def}
  L_\be(r) := \lim_{n \to \infty}
        \frac{ E \wt D \bigl( 0, \lfloor r^{\ga^{-n}} W \rfloor \bigr) }{
            2^n}
\end{equation}
giving us
\begin{equation}
\label{E:3.23}
\forall r>1\colon\quad \phi_\beta(r):=L_\beta(r)(\log r)^{-\Delta}
\end{equation}
From \eqref{E:L-def} we get $ 2 L_\beta(r^\gamma)=L_\beta(r)$, which then forces the log-log-periodicity \eqref{E:1.3a}. 
The construction via a (essentially) decreasing limit then ensures that~$\phi_\beta$ is bounded from above on~$(1,\infty)$  and  \cite[Theorem~2.5]{Biskup-Lin}  shows  that~$\phi_\beta$ is also uniformly positive.

While simple, the construction of~$L_\beta$ via \eqref{E:L-def} harbors several conceptual problems. First, it concerns the restricted distance. Second, it depends on~$W$ which itself depends on~$\beta$ and~$\eta$. In~\cite[Section~3]{Biskup-Lin}, these concerns are dispelled by subsequently proving that,
for all $r \ge 1$ and Lebesgue a.e.~$x \in \R^d$,
\begin{equation} 
\label{E:dist-1-lim-as}
    \frac{\wt D \bigl( 0, \lfloor r^{\ga^{-n}} x \rfloor \bigr)}{2^n}
    \:\underset{n \to \infty}\longrightarrow \: L_\beta(r) ,
    \quad \text{$P$-a.s.}
\end{equation}
see~\cite[Proposition~3.3]{Biskup-Lin}. The proof of this is based on the subadditivity estimate \eqref{E:subadd} and, modulo rounding of the arguments of~$\wt D$, it can be taken over \emph{verbatim}.

Another concern is the regularity of~$r\mapsto L_\beta(r)$. As in~\cite{Biskup-Lin}, this can again be handled using the subadditivity bound \eqref{E:subadd} which gives
\begin{equation}
\label{E:3.27}
\wt D\bigl(0, \lfloor r^{\ga^{-n}}x\rfloor\bigr)
    \overset{\text{\rm law}}\le 
    \wt D(0, \lfloor r^{\ga_1\ga^{-n}} |x|^{\ga_1} Z\rfloor\bigr)
    + \wt D'\bigl(0, \lfloor r^{\ga_2\ga^{-n}} |x|^{\ga_2} Z'\rfloor\bigr)
    +O(1),
\end{equation}
where, thanks to \eqref{E:A-prob}, $O(1)$ is bounded in~$L^1$ uniformly in~$x$ and~$r\ge1$. Since~$Z$ is continuously distributed, \twoeqref{E:dist-1-lim-as}{E:3.27} give
\begin{equation}
L_\beta(r)\le L_\beta(r^{\gamma_1})+L_\beta(r^{\gamma_2})
\end{equation}
for all $\gamma_1,\gamma_2 \in  (0,  \overline\gamma )$ with~$\gamma_1+\gamma_2=2\gamma$. This implies convexity of~$t\mapsto L_\beta(\texte^t)$ and thus continuity of~$r\mapsto L_\beta(r)$ and $r\mapsto\phi_\beta(r)$ on~$(1,\infty)$. 

The next step in the argument is the replacement of  the  limit along doubly exponentially growing sequences by a plain limit $r\to\infty$. This comes at the cost of reinserting~$W$:

\begin{lemma} 
\label{P:dist-1-plim}
Suppose~$\wt D$ and~$W$ are independent. Then
\begin{equation}
    \frac{\wt D \bigl( 0,\lfloor rW \rfloor \bigr)}{L(r)}
    \,\, \underset{r \to \infty}\longrightarrow \,\,
    1, \quad \text{\rm in probability and in~$L^2$} .
\end{equation}
\end{lemma}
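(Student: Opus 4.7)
My plan is to adapt \cite[proof of Proposition~3.4]{Biskup-Lin} to the lattice setting, with the main modifications concerning the rounding $\lfloor\cdot\rfloor$ and the discrete-continuum coupling already established in Proposition~\ref{P:subadd}.

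\emph{Step~1 (convergence along doubly-exponential scales with $W$).}
Since $W$ is independent of $\wt D$ and, by \cite[Lemma~3.1]{Biskup-Lin}, admits a Lebesgue-continuous, a.e.-positive density, Fubini combined with \eqref{E:dist-1-lim-as} immediately gives, for each fixed $r>1$,
\begin{equation*}
\wt D\bigl(0,\lfloor r^{\ga^{-n}}W\rfloor\bigr)\big/2^n \,\underset{n\to\infty}\longrightarrow\, L_\be(r) \quad\text{$P$-a.s.}
\end{equation*}
To promote this to $L^2$, I iterate \eqref{E:subadd} with $\ga_1=\ga_2=\ga$ in $L^2$-norm, using the stretched-exponential tail from \eqref{E:A-prob} together with finite moments of $W$ to control the error terms, and using $|W|^\ga Z\laweq W$ to close the recursion. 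This produces $\sup_n\bigl\|\wt D(0,\lfloor r^{\ga^{-n}}W\rfloor)/2^n\bigr\|_{L^2}<\infty$, and Vitali upgrades the almost-sure limit to $L^1$ and $L^2$ convergence.

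\emph{Step~2 (reduction to a compact parameter interval).}
Fix any $r_0>1$. For $r$ large, choose $n(r)\in\N$ so that $\rho(r):=r^{\ga^{n(r)}}\in[r_0,r_0^{1/\ga})$; then $n(r)\to\infty$ and iterating $L_\be(\rho^{1/\ga})=2L_\be(\rho)$ yields $L_\be(r)=2^{n(r)}L_\be(\rho(r))$. The lemma thereby reduces to
\begin{equation*}
\frac{\wt D\bigl(0,\lfloor \rho(r)^{\ga^{-n(r)}}W\rfloor\bigr)}{2^{n(r)}L_\be(\rho(r))}\,\underset{r\to\infty}\longrightarrow\,1
\end{equation*}
in probability, with $\rho(r)$ ranging over the compact interval $[r_0,r_0^{1/\ga})$.

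\emph{Step~3 (uniformity in $\rho$, the main obstacle).}
The central difficulty is securing the above convergence uniformly in $\rho$. For the upper bound, I would iterate \eqref{E:subadd} with $\ga_1=\ga_2=\ga$, starting from $x=\lfloor\rho^{\ga^{-n}}W\rfloor$, for $k:=n-m$ steps where $m$ is a large fixed integer; since $|W|^\ga Z\laweq W$ is preserved at each step, this yields a stochastic upper bound
\begin{equation*}
\wt D\bigl(0,\lfloor\rho^{\ga^{-n}}W\rfloor\bigr) \,\overset{\text{\rm law}}\le\, \sum_{j=1}^{2^{n-m}}\wt D_j\bigl(0,\lfloor\rho^{\ga^{-m}}\tilde W_j\rfloor\bigr)+\mathcal E_{n-m},
\end{equation*}
where the $\wt D_j$ are i.i.d.\ copies of $\wt D$, each $\tilde W_j$ is marginally distributed as $W$, and $\|\mathcal E_{n-m}\|_{L^2}=O(2^{n-m})$ by \eqref{E:A-prob}. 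Applying Step~1 to each leaf gives $\wt D_j(0,\lfloor\rho^{\ga^{-m}}\tilde W_j\rfloor)/2^m\to L_\be(\rho)$ in probability as $m\to\infty$, uniformly in $\rho\in[r_0,r_0^{1/\ga})$ by continuity of $L_\be$; a weak law for the exchangeable leaves then delivers the upper bound $\le(1+\ep)\cdot 2^n L_\be(\rho)$ with high probability. The matching lower bound is the hardest point: it comes from revisiting the proof of Proposition~\ref{P:subadd} to extract the fact that the subadditivity inequality is asymptotically saturated whenever the minimizing edge $(X,Y)$ lies in the Poisson process $\scrI$---an event of high probability on $A(x)^{\rm c}$. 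Propagating this near-equality down the binary tree, and verifying that the discrepancies introduced by lattice rounding remain $o(2^n)$, will produce the matching lower bound and complete the proof.
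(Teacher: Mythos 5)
Your Steps 1 and 2 are fine and closely parallel the paper. But Step 3 contains the real work of this lemma, and there you diverge from the paper's route and leave a genuine gap.

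The core issue is that pointwise convergence of $X_n(\rho):=2^{-n}\wt D(0,\lfloor\rho^{\gamma^{-n}}W\rfloor)$ to $L_\beta(\rho)$, even combined with continuity of $\rho\mapsto L_\beta(\rho)$, does \emph{not} give uniform convergence over a compact set of $\rho$'s; that inference is a non-sequitur. You need some additional structure. The paper supplies it by observing that, thanks to iterated applications of \eqref{E:subadd}, both $\rho\mapsto EX_n(\rho)$ and $\rho\mapsto E(X_n(\rho)^2)$ are downward monotone in~$n$ modulo vanishing additive errors, and then invoking Dini's theorem to upgrade pointwise convergence of a monotone sequence of continuous functions to local uniformity. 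That gives $EX_n(\rho)\to L_\beta(\rho)$ and $\Var(X_n(\rho))\to 0$ uniformly on compacts, which is exactly what is needed after your Step~2 reduction, and it makes your envisioned ``matching lower bound via near-saturation of the subadditivity inequality'' entirely superfluous. That near-saturation argument would be quite delicate to make rigorous, and the paper never attempts it.

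Dini's theorem requires the functions $\rho\mapsto EX_n(\rho)$ to be continuous, and this is precisely where the discrete setting creates a new wrinkle absent in~\cite{Biskup-Lin}: $x\mapsto\wt D(0,\lfloor x\rfloor)$ is only piecewise constant. The paper resolves this by replacing $\wt D(0,x)$ for $x\in\R^d$ with a convex-combination interpolation of $\wt D(0,\cdot)$ over the corners $\lfloor x\rfloor+\{0,1\}^d$, using weights $h(x-\lfloor x\rfloor,\sigma)$; the interpolated function is continuous, differs from $\wt D(0,\lfloor x\rfloor)$ by at most $d$, and still obeys \eqref{E:subadd} with the constant ``$1$'' replaced by a $d$-dependent constant. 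This single technical device --- which is the only genuinely new ingredient relative to the continuum argument --- is missing from your proposal. In short: the monotonicity/Dini mechanism and the interpolation trick are the two pieces you need to add, and the tree-decomposition/weak-law/near-equality scheme can be dropped.
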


\begin{proofsect}{Proof}
The corresponding statement in~\cite{Biskup-Lin} (see Proposition~3.7 there) is deduced from the fact that, for $X_n(r):=2^{-n}\wt D(0,r^{\gamma^{-n}}W)$, the limits $EX_n(r)\to L_\beta(r)$ and $\Var(X_n(r))\to0$ are locally uniform in~$r\ge1$. This is in turn proved by noting that, thanks to \eqref{E:subadd}, both $EX_n(r)$ and~$E(X_n(r)^2)$ are downward monotone in~$n$ modulo additive correction terms  that vanish as $n\to\infty$ locally uniformly in $r>1$.  As~$r\mapsto X_n(r)$ is continuous in the continuum model, the local uniformity is then extracted from Dini's Theorem.

In order to adapt this reasoning to our setting, we need to supply an argument for continuity. This can be achieved by extending the definition of~$x\mapsto\wt D(0,x)$ to all~$x\in\R^d$ as follows: Let $\dist_\infty$ denote the $\ell^\infty$-distance on~$\R^d$ and let $h\colon[0,1]^d\times\{0,1\}^d\to[0,1]$ be defined by
\begin{equation}
h(x,\sigma) :=\bigl[1-\dist_\infty(x,\sigma)\bigr]\Bigl(\sum_{\sigma'\in\{0,1\}^d}\bigl[1-\dist_\infty(x,\sigma')\bigr]\Bigr)^{-1}
\end{equation}
This function is continuous in~$x$ with $h(\sigma,\sigma')=\delta_{\sigma,\sigma'}$ for all $\sigma,\sigma'\in\{0,1\}^d$.  Defining, for each $x\in\R^d$,  
\begin{equation}
\wt D(0,x):=\sum_{\sigma\in\{0,1\}^d}h\bigl(x-\lfloor x\rfloor,\sigma\bigr)\wt D\bigl(0,\lfloor x\rfloor+\sigma\bigr),
\end{equation}
 we get an continuous extension of~$x\mapsto \wt D(0,x)$ to~$\R^d$. 
The subadditive bound \eqref{E:subadd} (which implied the aforementioned downward monotonicity) holds without any rounding albeit with ``$1$'' on the right replaced by a $d$-dependent constant thanks to the bound $|\wt D(0,x)-\wt D(0,\lfloor x\rfloor)|\le d$ for all~$x\in\R^d$.
This constant is irrelevant in the argument and so we can  proceed as in \cite{Biskup-Lin}.
\end{proofsect}

Before we move on, we record a useful consequence of above derivations:

\begin{corollary}
\label{cor-3.3}
There is $c =c(d,s,\beta) \in(0,\infty)$ such that
\begin{equation}
\label{E:3.30}
\forall x\in \Z^d\smallsetminus\{0\}\colon\quad
E \bigl(\wt D(0,x)\bigr)\le c\bigl[1+\log|x|\bigr]^\Delta.
\end{equation}
\end{corollary}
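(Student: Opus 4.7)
The plan is to iterate the subadditivity inequality from Proposition~\ref{P:subadd} a doubly-logarithmic (in $|x|$) number of times, leveraging the algebraic identity $2\gamma^\Delta=1$ that drives the whole polylogarithmic scaling. Taking expectations in \eqref{E:subadd} with $\gamma_1=\gamma_2=\gamma$ and using \eqref{E:A-prob} to bound the error from $|x|_1 P(A(x))$ by a constant, we get
\begin{equation}
\label{E:plan-one-step}
E\wt D(0,x)\,\le\, 2\, E\wt D\bigl(0,\lfloor |x|^\gamma Z\rfloor\bigr)+C
\end{equation}
for $|x|$ larger than a $\beta$-dependent threshold (both summands on the right of \eqref{E:subadd} have equal expectation by the i.i.d.\ assumption on $Z,Z'$). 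Iterating \eqref{E:plan-one-step} by conditioning on successive i.i.d.\ copies $Z_1,Z_2,\dots$ of $Z$ and applying Proposition~\ref{P:subadd} to the inner $\wt D$ at each step yields, after $n$ rounds,
\begin{equation}
E\wt D(0,x)\,\le\, 2^n \,E\wt D\bigl(0,\lfloor |x|^{\gamma^n}W_n\rfloor\bigr)+C(2^n-1),
\end{equation}
where $W_n:=Z_n\prod_{k=1}^{n-1}|Z_k|^{\gamma^{n-k}}$ is a finite analogue of the variable $W$ from \eqref{E:w-def}.

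The next step is to choose $n:=\lfloor\log_{1/\gamma}\log|x|\rfloor$, so that $|x|^{\gamma^n}$ is bounded by an absolute constant while $2^n=(\log|x|)^{\log_{1/\gamma}2}=(\log|x|)^\Delta$ thanks to $2\gamma^\Delta=1$. It then suffices to show that $E\wt D(0,\lfloor|x|^{\gamma^n}W_n\rfloor)$ stays bounded uniformly in~$n$. Using $\wt D(0,y)\le|y|_1$ from \eqref{E:3.2}, this reduces to bounding $E|W_n|$. Since $Z$ has the Gaussian-like density in \eqref{E:z-law}, all its moments are finite; Jensen's inequality applied to the concave map $z\mapsto z^{\gamma^j}$ gives $E|Z|^{\gamma^j}\le(E|Z|)^{\gamma^j}$, whence
\begin{equation}
\log E|W_n|\,\le\,\sum_{j=0}^{n-1}\gamma^j\log E|Z|\,\le\,\frac{|\log E|Z||}{1-\gamma},
\end{equation}
uniformly in~$n$. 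Combining the pieces then gives $E\wt D(0,x)\le C(\log|x|)^\Delta$ for $|x|$ large, and the trivial bound $E\wt D(0,x)\le|x|_1$ handles bounded $|x|$ after adjusting~$c$, establishing \eqref{E:3.30}.

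The main technical issue to address is the bookkeeping of the rounding operators and of the cumulative error terms across the $n$ iteration steps. At step~$k$, the scale $|x|^{\gamma^{k-1}}$ must exceed the threshold required by Proposition~\ref{P:subadd}; this holds for all but the last $O(1)$ steps, and the corresponding error $|x|^{\gamma^{k-1}}\exp(-c|x|^{\vartheta\gamma^{k-1}})$ is super-polynomially small in $|x|$ at those steps, hence easily absorbed into the $C(2^n-1)=O((\log|x|)^\Delta)$ total. The last few steps, where the argument has already reached bounded magnitude, contribute to the leaf estimate $E\wt D(0,\lfloor|x|^{\gamma^n}W_n\rfloor)\le$ const directly via $\wt D(0,y)\le|y|_1$. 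Rounding discrepancies at intermediate steps yield additional additive constants that are harmless after a further adjustment of~$C$.
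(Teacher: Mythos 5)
Your argument is correct in outline and takes a genuinely different route from the paper's. The paper does not iterate the subadditivity bound for this Corollary: it quotes Lemma~\ref{P:dist-1-plim} (the $L^2$-convergence $\wt D(0,\lfloor rW\rfloor)/L_\beta(r)\to 1$, which already packages the doubly exponential iteration through the variable~$W$ of \eqref{E:w-def}), deduces from it $E\wt D(0,\lfloor rW\rfloor)\le\tilde c(1+\log r)^\Delta$, and then converts this into a bound with~$Z$ in place of~$W$ via the density comparison $f_W(w)=\int_{(0,\infty)}v^{-1}f_Z(w/v)\,\mu(\textd v)$ together with $\mu([1,v_0])>0$, giving $f_W\ge c_* f_Z$; a single application of \eqref{E:3.19} then closes the argument. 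You instead unwind the subadditivity recursion to depth $n\approx\log_{1/\gamma}\log|x|$, use $2\gamma^\Delta=1$ to read off $2^n\asymp(\log|x|)^\Delta$, and control the truncated product~$W_n$ uniformly in~$n$ via Jensen. This is more self-contained --- it avoids both the $L^2$-convergence machinery and the density-comparison trick --- at the cost of having to redo the iterated-rounding and error-term bookkeeping that the paper's route keeps hidden inside a previously proved lemma. One small correction to your last paragraph: the scale at step~$k$ is the random quantity $|x|^{\gamma^{k-1}}\prod_{j<k}|Z_j|^{\gamma^{k-1-j}}$, not the deterministic $|x|^{\gamma^{k-1}}$, so it can fall below the threshold of Proposition~\ref{P:subadd} at random~$k$ even when $|x|$ is large; what makes the iteration go through is that the per-step expected error $1+E\bigl[|y_{k-1}|_1\,1_{A(y_{k-1})}\bigr]$ is bounded by a constant uniformly in the current scale (by the tail bound \eqref{E:A-prob} when the scale is above the threshold, and the trivial bound $\wt D(0,\cdot)\le|\cdot|_1$ when it is below), which is what keeps the cumulative error at $O(2^n)$.
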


\begin{proofsect}{Proof}
 The claim will follow from \eqref{E:3.19} and the bound \eqref{E:A-prob} once we prove
\begin{equation}
\label{E:3.33}
\forall r\ge1\colon\quad E \wt D(0,rZ)\le \tilde c(1+\log r)^\Delta
\end{equation}
with~$\tilde c\in(0,\infty)$  a constant and~$Z\independent\wt D$. For this we note that, by  Lemma~\ref{P:dist-1-plim}, a bound of this kind holds for~$Z$ replaced by~$W$ so it suffices to ``exchange'' the probability density~$f_W$ of~$W$ for that of~$Z$. Using that $Z\laweq W/|W'|^\gamma$ for $W\independent W'$ with $W'\laweq W$ and writing $f_Z(z):=\sqrt{\eta\beta}\,\texte^{-\eta c_0|z|^{2d}}$ for the probability density of~$Z$, we have
\begin{equation}
f_W(w) = \int_{(0,\infty)} v^{-1}f_Z(w/v)\mu(\textd v),
\end{equation}
where $\mu$ is the law of~$|W|^\gamma$ on~$\R$. For~$v\ge1$, we have $f_Z(w/v)\ge  f_Z(w)$ and so it suffices to show that~$\mu([1,v_0])>0$ for some~$v_0>1$.  By continuity of measure,  for this it suffices to  have  $\mu([1,\infty))>0$ which is checked readily from \eqref{E:w-def} and \eqref{E:z-law}.
\end{proofsect}

\subsection{Actual distance}
We are now ready to start working towards the asymptotics of the actual distance~$D$. Paralleling the approach in~\cite[Section~4]{Biskup-Lin}, fix $\overline\gamma\in(\gamma,1)$ and extend~$\wt D$ to a family of restricted ``distance'' functions,
\begin{equation} \label{E:dist-k}
    \wt D_k(x,y) := \min \left\{ n \ge 0 \colon \,
        \begin{gathered}
            \{(x_{i-1},x_i) \colon i=1,\dotsc,n\} \subseteq \scrE , \:
            x_0 = x, \:
            x_n = y, \\
            \forall i=1,\dots,n\colon\, |x_i-x|
             \le 2|x-y|^{{\overline\gamma}^{\,-k}} \\
        \end{gathered} \right\}.
\end{equation}
These interpolate between the actual distance and the restricted distance monotonically,
\begin{equation} \label{E:dist-k-mono} \begin{aligned}
    D(x,y)
    &\le \dotsb
    \le \wt D_{k+1}(x,y)
    \le \wt D_k(x,y) \\
    &\le \dotsb
    \le \wt D_1(x,y)
    \le \wt D_0(x,y)
    = \wt D(x,y) .
\end{aligned} 
\end{equation}
Since~$k \mapsto \wt D_k(x,y)$ is non-increasing, non-negative,
and takes values in~$\Z$, the sequence $\{\wt D_k(x,y)\}_{k\ge1}$ must stabilize;
i.e.,\ $\wt D_k(x,y) = \wt D(x,y)$ for all~$k$ sufficiently large,
depending on~$x$, $y$, and on the random edges that determine the distances.
A key fact is that, at large scales, this happens uniformly with high probability:

\begin{lemma}
\label{lemma-3.3}
Let~$W$ be independent of the distances~$\wt D_k$ and~$D$.
There is $k \in \N$ such that
\begin{equation}
    \lim_{r \to \infty} P \Bigl(
        \wt D_k \bigl( 0,\lfloor rW \rfloor \bigr)
        = D \bigl( 0, \lfloor rW \rfloor \bigr)
    \Bigr) = 0 .
\end{equation}
\end{lemma}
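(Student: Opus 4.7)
The plan is to prove the lemma by producing, with probability tending to $1$, a strict gap $\wt D_k(0,\lfloor rW\rfloor) > D(0,\lfloor rW\rfloor)$. Since $\{\wt D_k\}$ is monotone non-increasing in $k$ and bounded below by $D$, the most natural choice is $k=0$, where the restriction to paths inside $B(0,2|\lfloor rW\rfloor|_1)$ is the most binding. The task thus reduces to showing that, for typical $W$, the unrestricted optimizer for $D(0,\lfloor rW\rfloor)$ realizes a strictly smaller value than any path confined to this ball.

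The argument breaks into three steps. First, I would locate long shortcut edges outside the ball: using \eqref{E:1.1}, the expected number of occupied edges $(u,v)$ with $u\in B(0,r/10)$ and $v\in B(0,100r)\setminus B(0,10r)$ is of order $r^{d}\cdot r^{d}/r^s=r^{2d-s}\to\infty$ since $s<2d$, and the same holds for edges $(u',v')$ with $u'$ near $\lfloor rW\rfloor$ and $v'$ in the same far annulus. A second-moment / Poissonization estimate then gives such pairs with probability tending to $1$. Second, I would control the chemical distance between $v$ and $v'$ in this far annulus: invoking translation invariance and Corollary~\ref{cor-3.3} (applied to the shifted configuration), $E\,D(v,v')\le c(\log r)^\Delta$, so with high probability a path joining $v$ and $v'$ exists with length $O((\log r)^\Delta)$ and contained in the annular region outside $B(0,2|\lfloor rW\rfloor|_1)$. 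Concatenating with the two shortcut edges produces a candidate path from $0$ to $\lfloor rW\rfloor$ of total length $2+O((\log r)^\Delta)$ that necessarily leaves $B(0,2|\lfloor rW\rfloor|_1)$. Third, I would compare with the lower bound on $\wt D_0$: iterating the subadditivity inequality from Proposition~\ref{P:subadd} in its restricted form shows $\wt D_0(0,\lfloor rW\rfloor)$ is at least $L_\beta(r)+1$ with probability tending to $1$ (by integer-valuedness and an anti-concentration estimate), which, together with $D(0,\lfloor rW\rfloor)\le L_\beta(r)+o((\log r)^\Delta)$ from the upper-bound half of Theorem~\ref{thm-1}, produces the strict inequality.

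The hardest point will be the third step: converting the existence of a shortcut through the forbidden annulus into a strict numerical gain of at least one edge over the best restricted path. Because both $D$ and $\wt D_0$ scale like $(\log r)^\Delta$, the potential gap is a lower-order correction, and one needs to rule out the coincidence $\wt D_0=D$ on an event of non-vanishing probability. The natural route is to establish an \emph{anti-concentration} statement of the form $\sup_{n\in\Z}P(\wt D_0(0,\lfloor rW\rfloor)=n)\to 0$, obtained by exploiting the randomness of the long edges along the hierarchical dyadic decomposition; since the integers in the ranges of $\wt D_0$ and $D$ are sparse, pairing this with the strict shortcut of the second step forces the two to differ with probability tending to $1$. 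Running this delicate anti-concentration and savings comparison at the precision required is the crux of the argument, and is where the analysis goes beyond what is needed for the $o(1)$ error in Theorem~\ref{thm-1}.
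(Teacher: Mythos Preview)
Your proposal attacks the statement as literally displayed, but that display contains a typographical error: the surrounding text (``A key fact is that, at large scales, this [stabilization $\wt D_k = D$] happens uniformly with high probability'') and the subsequent use of the lemma in the proof of Theorem~\ref{thm-1} (``the lower bound follows from Lemma~\ref{lemma-3.3} and Proposition~\ref{L:dist-k-lwr}'') make clear that the intended conclusion is
\[
\lim_{r\to\infty} P\bigl(\wt D_k(0,\lfloor rW\rfloor) \ne D(0,\lfloor rW\rfloor)\bigr)=0,
\]
i.e., equality with probability tending to~$1$, not~$0$. The paper's own proof simply invokes \cite[Lemma~4.2]{Biskup-Lin}, whose argument carries over verbatim to the lattice; that lemma shows that for~$k$ large enough the relaxed constraint $|x_i|\le 2|x-y|^{\overline\gamma^{\,-k}}$ is so loose that the unrestricted minimizer is captured with high probability. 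Your entire three-step program is therefore aimed at the wrong target: you are trying to force a strict gap $\wt D_0>D$, whereas what is needed is to show that for some (large)~$k$ no such gap occurs.

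Even setting aside the misread statement, Step~3 of your plan has a genuine hole. You claim that ``iterating the subadditivity inequality \dots\ shows $\wt D_0(0,\lfloor rW\rfloor)$ is at least $L_\beta(r)+1$ with probability tending to~$1$,'' but subadditivity produces only \emph{upper} bounds on~$\wt D_0$; it cannot yield the lower bound you need. The anti-concentration estimate $\sup_n P(\wt D_0=n)\to0$ that you propose as a remedy is nowhere established in the paper and is itself a hard statement---indeed, since both $D$ and $\wt D_0$ concentrate at the same leading order $L_\beta(r)$ by Lemma~\ref{P:dist-1-plim} and \eqref{E:dist-lim-prob-w}, distinguishing them by a single unit requires control of fluctuations at a precision well beyond anything developed here. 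Your detour path through the outer annulus has length $2+O((\log r)^\Delta)$, which is the same order as $\wt D_0$, so the comparison collapses to a lower-order question that your outline does not resolve.
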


\begin{proofsect}{Proof}
This is a lattice version of \cite[Lemma~4.2]{Biskup-Lin} whose proof went through by way of the discrete distances and so can be taken over without change.
\end{proofsect}

The next result to establish is an analogue of~\cite[Lemma~4.3]{Biskup-Lin}, which bounds
the ratio~$E\wt D_k(0,\lfloor rW\rfloor)/L(r)$ asymptotically by one from below. In~\cite{Biskup-Lin}, the proof relied on continuity of~$\beta\mapsto\phi_\beta(r)$ which was in turn proved using scaling arguments that do not seem to apply here. However, the above does give us the following:

\begin{lemma} \label{L:cts-ae-be}
For each~$r>1$, $\be\mapsto\phi_\be(r)$ is left-continuous and downward monotone.
There exists an (at most) countable set~$\Sigma \subseteq (0, \infty)$
such that, for each~$r>1$,
the function~$\be \mapsto \phi_\be(r)$ is continuous
at all points~$\be' \in (0,\infty) \smallsetminus \Sigma$.
\end{lemma}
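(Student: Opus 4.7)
Monotonicity comes from a standard coupling: introduce i.i.d.\ Uniform$(0,1)$ variables $\{U_e\}$ indexed by edges and declare edge~$e$ occupied at parameter~$\beta$ iff $U_e\le\frakp_\beta(e)$. Since $\beta\mapsto\frakp_\beta(e)$ is non-decreasing, the edge sets $\scrE_\beta$ are non-decreasing in~$\beta$ in this coupling, and consequently $\wt D_\beta(0,y)\ge\wt D_{\beta'}(0,y)$ pointwise for each $y\in\Z^d$ and $\beta<\beta'$. Fixing a single~$x_0$ in the Lebesgue-full-measure set on which \eqref{E:dist-1-lim-as} holds for the two parameter values simultaneously, dividing by $2^n$ and letting $n\to\infty$ yields $L_\beta(r)\ge L_{\beta'}(r)$, i.e., $\phi_\beta(r)\ge\phi_{\beta'}(r)$.

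For left-continuity, the plan is to represent $\beta\mapsto L_\beta(r)$ as an infimum over~$n$ of functions continuous in~$\beta$. Setting $g_n(\beta):=2^{-n}E\wt D_\beta(0,\lfloor r^{\gamma^{-n}}W_\beta\rfloor)$, one iterates \eqref{E:subadd} with $\gamma_1=\gamma_2=\gamma$ and uses the tail bound \eqref{E:A-prob} to get a one-step recursion $g_n(\beta)\le g_{n-1}(\beta)+\kappa(\beta)2^{-n}$, where $\kappa$ can be chosen uniformly bounded on any compact $K\subseteq(0,\infty)$ (the constants $c_1,c_2$ in \eqref{E:A-prob} stay uniform in $\beta\in K$). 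Then $h_n(\beta):=g_n(\beta)+2\kappa(K)2^{-n}$ is non-increasing in~$n$ on $K$, and $h_n(\beta)\downarrow L_\beta(r)$ by \eqref{E:L-def}. Continuity of each $h_n$ in~$\beta$ follows from a dominated convergence argument: $\wt D_\beta(0,y)\le|y|_1$ and $\beta\mapsto\wt D_\beta(0,y)$ is a.s.\ continuous at each fixed~$\beta_0$ (in the coupling, $\scrE_\beta$ is both left- and right-continuous at~$\beta_0$ a.s., since $P(U_e=\frakp_{\beta_0}(e))=0$ for each edge~$e$), while the density and moments of~$W_\beta$ depend continuously on~$\beta$ with uniform control on~$K$. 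Since the infimum of continuous functions is upper semicontinuous, $L_\beta(r)=\inf_n h_n(\beta)$ is USC on~$K$; for a non-increasing function USC is equivalent to left-continuity, giving the claim on~$K$ and hence on all of~$(0,\infty)$.

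Finally, a bounded monotone function has at most countably many discontinuities, so for each $r>1$ the discontinuity set $\Sigma_r$ of $\beta\mapsto L_\beta(r)$ is countable. To obtain an $r$-independent~$\Sigma$, combine the convexity of $t\mapsto L_\beta(e^t)$ (already established) with the $\beta$-uniform-on-compacts upper bound on~$L_\beta(r)$ (coming from monotonicity and the boundedness of $\phi_\beta$ in~$r$ shown earlier): these together yield equi-Lipschitz bounds on $r\mapsto L_\beta(r)$, uniform in~$\beta$ on compact subsets. Setting $\Sigma:=\bigcup_{r\in\Q,\,r>1}\Sigma_r$, equi-Lipschitz continuity in~$r$ then transfers continuity in~$\beta$ from rational~$r$ to all real $r>1$. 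The main technical obstacle is the continuity of $g_n$ in~$\beta$, which couples the $\beta$-dependence of~$\wt D_\beta$ with that of~$W_\beta$; a clean alternative is to work instead with the a.s.\ limit \eqref{E:dist-1-lim-as} at a fixed deterministic~$x_0$ and use the $L^2$-bound from Lemma~\ref{P:dist-1-plim} to supply the uniform integrability needed to bypass~$W_\beta$ altogether.
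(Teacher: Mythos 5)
Your proof is correct and follows essentially the same route as the paper's: monotone coupling for the $\beta$-monotonicity, an ``essentially decreasing'' representation of $L_\beta(r)$ as an infimum of $\beta$-continuous functions for left-continuity (hence upper semicontinuity, hence left-continuity for a non-increasing function), and the convexity-in-$\log r$/equicontinuity argument plus log-log-periodicity to patch together a countable $r$-independent $\Sigma$ from countably many monotone-function discontinuity sets. The paper compresses step two into a single sentence (``being a downward limit of continuous functions\dots''), and you are right to flag that the $\beta$-dependence of $W$'s law makes continuity of $g_n(\beta):=2^{-n}E\wt D_\beta(0,\lfloor r^{\gamma^{-n}}W_\beta\rfloor)$ a genuine point to justify; your dominated-convergence sketch handles this and your proposed alternative --- using \eqref{E:dist-1-lim-as} at a fixed deterministic $x_0$, where $\wt D(0,y)$ depends only on finitely many edges so $E\wt D_\beta(0,y)$ is manifestly continuous in $\beta$ --- is arguably the cleanest. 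One small simplification you could add: since $Z_\beta\laweq\beta^{-1/(2d)}\tilde Z$ with $\tilde Z$ having a $\beta$-free law, one gets $W_\beta\laweq\beta^{-1/(2d-s)}\tilde W$ with $\tilde W$ $\beta$-free, so the $W_\beta$-dependence decouples into an explicit deterministic scaling; this removes most of the delicacy you highlight at the end.
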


\begin{proof}
In light of \eqref{E:3.23}, the downward monotonicity follows from \eqref{E:dist-1-lim-as} and the fact that, under a monotone coupling of edge sets for two different~$\beta$, distances are ordered pointwise. Being a downward limit of continuous functions, $\beta\mapsto L_\beta(r)$ is left-continuous, and hence so is $\beta\mapsto\phi_\beta(r)$.

The convexity of~$t\mapsto L_\beta(\texte^t)$  shown  above  guarantees that, for any
$0<\be_0<\be_1<\infty$ and $1<r_0<r_1<\infty$,
    the family of functions
\begin{equation}
\label{E:3.34}
    \bigl\{ r \mapsto L_\be(r) \colon \be \in [\be_0,\be_1] \bigr\}
\end{equation}
is uniformly equicontinuous on~$[r_1,r_2]$. This implies that,
if $\be \mapsto \phi_\be(r)$ is continuous
at some~$\be' \in (0,\infty)$ for all~$r \in \Q \cap [r_1,r_2]$,
then it is continuous at~$\be'$ for all~$r \in [r_1,r_2]$.
Invoking the log-log-periodicity \eqref{E:1.3a}, $\beta\mapsto\phi_\beta(r)$ is  non-increasing and  continuous for all~$r>1$ as soon as~$\beta$ does not belong to
\begin{equation}
    \Sigma \: := \bigcup_{r \in \Q \cap [\texte^{\gamma}, \texte]}
        \Bigl\{ \be \in (0,\infty) \colon
            \lim_{\be' \downarrow \be} \phi_{\be'}(r)
            > \lim_{\be' \uparrow \be} \phi_{\be'}(r) \Bigr\}.
\end{equation}
This set is (at most) countable,
since for each~$r \in \Q \cap [\texte^\ga,\texte]$
the set of jump discontinuities of~$\be \mapsto \phi_\be(r)$
is at most countable.
\end{proof}

All that  we needed the  continuity of~$\be \mapsto \phi_\be$ for
in~\cite{Biskup-Lin} is condensed into:

\begin{lemma} 
\label{L:ratio-ae-be}
Let~$\Sigma$ be as in Lemma~\ref{L:cts-ae-be}.
Then for each~$\be \not\in \Sigma$,
\begin{equation} \label{E:3.3}
    \lim_{\be' \downarrow \be} \, \inf_{r>1}
        \frac{\phi_{\be'}(r)}{\phi_\be(r)}
    = 1.
\end{equation}
\end{lemma}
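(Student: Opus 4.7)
The plan is to reduce the infimum over $r>1$ to an infimum over a compact fundamental domain via log-log-periodicity, and then apply Dini's theorem to the monotone family $\{\phi_{\beta'}\}_{\beta'>\beta}$ of continuous functions.

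First, I would observe that \eqref{E:1.3a} implies that any $r>1$ can be written as $r=r_0^{\gamma^{-n}}$ for some unique $r_0\in I:=[\texte^\gamma,\texte]$ and $n\in\Z$, and $\phi_{\beta'}(r)=\phi_{\beta'}(r_0)$ (and similarly for $\phi_\beta$). Hence the infimum in \eqref{E:3.3} satisfies
\begin{equation}
\inf_{r>1}\frac{\phi_{\beta'}(r)}{\phi_\beta(r)}=\inf_{r\in I}\frac{\phi_{\beta'}(r)}{\phi_\beta(r)},
\end{equation}
and it suffices to work on the compact interval $I$.

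Next, I would invoke the properties already established: for every $\beta'>0$, the map $r\mapsto\phi_{\beta'}(r)$ is continuous on $(1,\infty)$ (from the convexity of $t\mapsto L_{\beta'}(\texte^t)$ shown just before Lemma~\ref{P:dist-1-plim}) and uniformly positive (from the remarks following \eqref{E:3.23}). Moreover, by Lemma~\ref{L:cts-ae-be}, $\beta\mapsto\phi_\beta(r)$ is non-increasing, so for $\beta'>\beta$ the family $\phi_{\beta'}\big|_{I}$ is monotone \emph{increasing} in $r$ pointwise as $\beta'\downarrow\beta$. The hypothesis $\beta\not\in\Sigma$ ensures that $\phi_{\beta'}(r)\uparrow\phi_\beta(r)$ pointwise on $I$, and the limit $\phi_\beta\big|_I$ is again continuous. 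Dini's theorem, applied to any sequence $\beta_n\downarrow\beta$, then upgrades this pointwise monotone convergence to uniform convergence on the compact set $I$:
\begin{equation}
\sup_{r\in I}\bigl|\phi_{\beta'}(r)-\phi_\beta(r)\bigr|\underset{\beta'\downarrow\beta}\longrightarrow 0.
\end{equation}

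Combining this with the uniform positivity $\inf_{r\in I}\phi_\beta(r)>0$ yields
\begin{equation}
\inf_{r\in I}\frac{\phi_{\beta'}(r)}{\phi_\beta(r)}\ge 1-\frac{\sup_{r\in I}|\phi_{\beta'}(r)-\phi_\beta(r)|}{\inf_{r\in I}\phi_\beta(r)}\underset{\beta'\downarrow\beta}\longrightarrow 1,
\end{equation}
while monotonicity $\phi_{\beta'}\le\phi_\beta$ gives the reverse inequality $\inf\le 1$. This proves \eqref{E:3.3}.

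I do not expect a serious obstacle here: all the structural pieces (log-log-periodicity, $r$-continuity of $\phi_\beta$, monotonicity in $\beta$, and pointwise continuity at $\beta\not\in\Sigma$) are already in hand. The only point deserving mild care is to confirm that continuity in $r$ holds for every $\beta'>\beta$ (not only for $\beta\not\in\Sigma$), which follows from convexity of $t\mapsto L_{\beta'}(\texte^t)$ being valid for all $\beta'>0$, and that the log-log-periodicity legitimately reduces the a priori unbounded infimum over $r>1$ to one over the compact $I$.
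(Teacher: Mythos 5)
Your proof is correct, and it takes a genuinely different route from the paper. Both arguments begin by using the log-log-periodicity \eqref{E:1.3a} to reduce the infimum over $r>1$ to one over the compact fundamental domain $I=[\texte^\gamma,\texte]$. From there the paper proves the \emph{contrapositive}: it picks, for each term of a decreasing sequence $\beta_n\downarrow\beta$, a minimizer $r_n\in I$, extracts a convergent subsequence $r_n\to r_\infty$, invokes the \emph{uniform equicontinuity} of the family in \eqref{E:3.34} (which came from the convexity of $t\mapsto L_{\beta'}(\texte^t)$) to identify the limit of the infima with $\lim_n\phi_{\beta_n}(r_\infty)/\phi_\beta(r_\infty)$, and concludes that this limit differing from~$1$ forces a jump discontinuity at~$\beta$, hence $\beta\in\Sigma$. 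You instead argue \emph{directly}: continuity at $\beta\not\in\Sigma$ gives monotone pointwise convergence $\phi_{\beta'}\uparrow\phi_\beta$ on $I$ as $\beta'\downarrow\beta$, so Dini's theorem (applied along any sequence $\beta_n\downarrow\beta$; the net limit agrees by monotonicity of $\sup_{r\in I}(\phi_\beta-\phi_{\beta'})$) upgrades this to uniform convergence, and uniform convergence plus positivity of $\inf_{r\in I}\phi_\beta$ (which, as you note, follows from continuity and compactness alone) finishes. What each buys: the paper's route leans on the equicontinuity already established for other purposes and fits the contrapositive framing they favor, while your Dini argument avoids equicontinuity entirely, eliminates the extraction of subsequences and minimizers, and reads more transparently as a direct proof.

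One small typo worth fixing: where you write ``the family $\phi_{\beta'}\big|_I$ is monotone increasing in $r$ pointwise as $\beta'\downarrow\beta$'', the word ``$r$'' should read ``$\beta'$'' (the monotonicity is in the parameter $\beta'$, holding $r$ fixed), though the surrounding sentence makes the intended meaning unambiguous.
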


\begin{proofsect}{Proof}
We will prove the contrapositive.
First observe that, by the log-log-periodicity~\eqref{E:1.3a}, we may restrict the infimum to $r \in [\texte^\gamma,\texte]$ without changing the result. Next, since the ratio is non-increasing in~$\be'$, we can take~$\be'$ down to~$\be$
along any decreasing sequence~$\be_n \downarrow \be$.
The continuity and boundedness imply existence of a minimizer for each~$n$;
call it~$r_n$ for~$\be' := \be_n$.
By compactness of $[\texte^\gamma,\texte]$
we may assume $r_n\to r_\infty \in [\texte^\gamma,\texte]$ as~$n \to \infty$.
But then the uniform equicontinuity of \eqref{E:3.34} implies
\begin{equation}
    \lim_{\be' \downarrow \be} \, \inf_{r \in [\texte^\gamma,\texte]}
        \frac{\phi_{\be'}(r)}{\phi_\be(r)}
    = \lim_{n \to \infty}
        \frac{\phi_{\be_n}(r_\infty)}{\phi_\be(r_\infty)} .
\end{equation}
If the latter limit is not equal to one,
then $\be' \mapsto \phi_{\be'}(r_\infty)$ is not continuous at~$\be$,
thus forcing $\be \in \Sigma$.
Hence $\be \not\in \Sigma$ implies \eqref{E:3.3}.
\end{proofsect}

Let us henceforth write~$P_\beta$ for the probability and~$E_\beta$ for the expectation associated with edge probabilities~$\frakp_\beta$. We then have:

\begin{proposition} 
\label{L:dist-k-lwr}
Let~$\beta\not\in\Sigma$ and let~$W$ be as in \eqref{E:w-def} for~$Z$ with law \eqref{E:z-law} for~$\eta:=1$. Then
\begin{equation} \label{E:dist-k-lwr}
 \forall k\ge1\colon\quad   \liminf_{r \to \infty} \, \frac{E_\beta\otimes E_W\,\wt D_k(0,rW)}{\phi_\beta(r)(\log r)^\Delta} \ge 1,
\end{equation}
where the  product of expectations indicates that~$W$ and~$\wt D_k$ are independent.
\end{proposition}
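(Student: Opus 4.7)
The plan is to adapt the strategy of \cite[Lemma~4.3]{Biskup-Lin}, substituting the one-sided continuity at $\beta \not\in \Sigma$ afforded by Lemma~\ref{L:ratio-ae-be} for the joint continuity of $(\be,r) \mapsto \phi_\be(r)$ used in the continuum setting. The case $k=0$ is immediate from Lemma~\ref{P:dist-1-plim} together with \eqref{E:3.23}, since $\wt D_0 = \wt D$ and convergence in $L^2$ transfers to expectations. The task is therefore to handle the general $k \ge 1$, where the inequality $\wt D_k \le \wt D_0$ could \emph{a priori} produce a smaller limit.

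The first step is a comparison at a perturbed intensity. Fix $\be' > \be$ and construct a monotone coupling of the $\be$- and $\be'$-percolation processes so that every $\be$-edge remains present in the $\be'$-graph and is supplemented by independent ``bonus'' edges of inhomogeneous rate $(\be'-\be)\frakq(\cdot)$. Given a path realizing $\wt D_k^{(\be)}(0,x)$, any excursion leaving the ball $B(0,2|x|_1)$ used by $\wt D = \wt D_0$ is rerouted back inside that ball by splicing in a short detour built from the bonus edges, thereby producing an admissible path for $\wt D^{(\be')}(0,x)$. The goal of this step is the pathwise bound
\begin{equation*}
\wt D_k^{(\be)}(0,x) \,\ge\, \wt D^{(\be')}(0,x) - \EE(x),
\end{equation*}
where $\EE(x)$ counts the extra edges absorbed by the reroutings.

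The second step controls $\EE(x)$. The number of excursions that leave $B(0,2|x|_1)$ in a near-optimal $\wt D_k^{(\be)}$-path is bounded using the same Poissonian tail \eqref{E:A-prob} that underlies Proposition~\ref{P:subadd}, applied at each of the $O(\log\log|x|)$ relevant dyadic scales via a union bound. Combined with Corollary~\ref{cor-3.3} to bound the contribution of a low-probability bad event on which the subadditivity cascade fails, this yields $E_\be \otimes E_W \,\EE(rW) = o(L_{\be'}(r))$. Taking expectations in the pathwise bound and applying Lemma~\ref{P:dist-1-plim} at parameter $\be'$ gives
\begin{equation*}
\liminf_{r\to\infty} \,\frac{E_\be \otimes E_W\,\wt D_k(0,rW)}{\phi_{\be'}(r)(\log r)^\Delta} \,\ge\, 1.
\end{equation*}
Since $\be \not\in \Sigma$, Lemma~\ref{L:ratio-ae-be} supplies $\lim_{\be'\downarrow\be}\inf_{r>1}\phi_{\be'}(r)/\phi_\be(r) = 1$, which permits replacing $\phi_{\be'}(r)$ by $\phi_\be(r)$ in the denominator and completes the proof.

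The main obstacle is the rerouting construction in the first step, together with the quantitative bound on $\EE(x)$. One must verify that the bonus-edge density from the increment $\be'-\be$ is sufficient to provide in-ball shortcuts around each excursion without inflating the path length by more than a vanishing fraction of $L(|x|)$, and that the number of such excursions along an optimal path concentrates at a rate compatible with the tail estimate \eqref{E:A-prob}. This is the analog of the continuum rerouting performed in \cite{Biskup-Lin}, but requires slightly more care on the lattice due to the need to coordinate with the discrete restriction built into the definition \eqref{E:dist-k} of $\wt D_k$.
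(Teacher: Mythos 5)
Your plan shares with the paper the endgame (Lemma~\ref{L:ratio-ae-be} to pass $\beta'\downarrow\beta$) but the mechanism in the middle is genuinely different, and that middle step has a serious gap. The paper does \emph{not} reroute an existing $\wt D_k$-path. Instead it runs a subadditive cascade: for $\epsilon>0$ it couples the $\beta$-edge set $\scrE$ to the union of an independent $\beta(1-2\epsilon)$-edge set $\scrE'$ and a Poisson edge process $\scrI$ of intensity proportional to $\epsilon\beta$, with $\scrE'\cup\lfloor\scrI\rfloor\subseteq\scrE$ (see \eqref{E:3.43}). The Poissonian edge $(\lfloor X\rfloor,\lfloor Y\rfloor)$ supplies the connection at the top scale, and the two halves are measured by $\wt D_{k+1}$ under $\scrE'$; since $\overline\gamma<1$ the nested-ball constraint is preserved. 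This gives \eqref{E:subadd2}, which, iterated $k$ times from $\wt D_0$ (whose limit is known by Lemma~\ref{P:dist-1-plim}), produces \eqref{E:dist-k-lwr} with $\phi_{\beta(1-2\epsilon)^{-k}}$ in the denominator, and Lemma~\ref{L:ratio-ae-be} then closes the gap as $\epsilon\downarrow0$.

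The rerouting step in your proposal is where the argument breaks. First, the ``bonus'' edges of rate $(\beta'-\beta)\frakq(\cdot)$ form a sparse, typically disconnected subgraph when $\beta'-\beta$ is small, so they cannot by themselves furnish a detour connecting the exit and entry points of an excursion; the detour must use the full $\beta'$-graph. But then the cost of the $j$-th detour is of order $L_{\beta'}(d_j)$, where $d_j$ is the Euclidean separation of the excursion's endpoints, so $\EE(x)$ is a sum of quantities each potentially comparable to $L(|x|)$, not (number of excursions)$\times O(1)$. The excursions are exactly where a $\wt D_k$-path could outperform a $\wt D_0$-path, so showing $E_\beta\otimes E_W\,\EE(rW)=o\bigl(L_{\beta'}(r)\bigr)$ is essentially the content of the proposition itself, not a consequence of the tail bound \eqref{E:A-prob}. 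That estimate controls the probability that a single Poissonian connecting edge in one subadditive step lands outside the allowed ball; it does not control the number, endpoint separation, or rerouting cost of excursions along an arbitrary near-optimal $\wt D_k$-path. As written, the proposal asserts the key estimate rather than proving it, and I do not see a repair that avoids reintroducing the inductive cascade the paper uses.
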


\begin{proof}
As in~\cite{Biskup-Lin}, the statement will be deduced from the fact (to be proved) that, for each~$k\ge1$, $\beta>0$ and $\epsilon\in(0, 1/4) $ there is~$c=c(k,\beta,\epsilon)\in(0,\infty)$ such that
\begin{equation} \label{E:subadd2}
    E_\beta\otimes E_W\,\wt D_{k} \bigl( 0, \lfloor r \ep^{-\frac1{2d-s}} W \rfloor \bigr)
    \le 2 E_{\be(1-2\ep)}\otimes E_W\,\wt D_{k+1} \bigl(
        0, \lfloor r^\ga \ep^{-\frac1{2d-s}} W \rfloor \bigr)
    + c .
\end{equation}
Indeed, dividing both sides~$\phi_{\beta(1-2\epsilon)}(r)(\log r)^\Delta$ and taking $r\to\infty$ shows
\begin{equation}
\liminf_{r \to \infty} \, \frac{E_{\beta'}\otimes E_W\,\wt D_{k+1}(0,rW)}{\phi_{\beta'}(r)(\log r)^\Delta}
\ge\biggl[\inf_{r>1}\frac{\phi_\beta(r)}{\phi_{\beta'}(r)}\biggr]
\liminf_{r \to \infty} \, \frac{E_{\beta}\otimes E_W\,\wt D_k(0,rW)}{\phi_{\beta}(r)(\log r)^\Delta},
\end{equation}
where $\beta':=\beta(1-2\epsilon)$. Since \eqref{E:dist-k-lwr} holds for $k:=0$ and all~$\beta>0$ by Lemma~\ref{P:dist-1-plim}, this bounds \eqref{E:dist-k-lwr} inductively for any $k\ge1$ by
\begin{equation}
\prod_{j=1}^k \inf_{r>1}\frac{\phi_{\beta(1-2\epsilon)^{-j}}(r)}{\phi_{\beta(1-2\epsilon)^{1-j}}(r)}\ge\biggl[\inf_{r>1}\frac{\phi_{\beta(1-2\epsilon)^{-k}}(r)}{\phi_{\beta}(r)}\biggr]^k\,,
\end{equation}
where the inequality follows from downward monotonicity of~$\beta\mapsto\phi_\beta(r)$. Taking~$\epsilon\downarrow0$ and applying Lemma~\ref{L:ratio-ae-be} we then get \eqref{E:dist-k-lwr} for all~$\beta\not\in\Sigma$.

As in~\cite{Biskup-Lin}, the proof of~\eqref{E:subadd2} is based on a variant of the argument from Proposition~\ref{P:subadd}. Let~$\mu$, resp., $\mu'$ be as in \twoeqref{E:3.8}{E:3.9} for $\eta:=\epsilon$ and let~$\scrI$, resp., $\scrI'$ be independent Poisson processes with intensity measures $\mu$, resp., $\mu$'. For any~$(\tilde x,\tilde y)\in\R^d\times\R^d$ satisfying $|\tilde x| \le |\tilde y|$, $|\tilde x| \le |x|^{\overline\gamma}$,
and~$|\tilde y-x| \le |x|^{\overline\gamma}$ we have
\begin{equation}
\mu \Bigl( \bigl( \lfloor\tilde x\rfloor + [0,1)^d \bigr)
        \times \bigl( \lfloor\tilde y\rfloor + [0,1)^d \bigr) \Bigr)
        +\frakp_{\beta(1-2\epsilon)}\bigl(\lfloor \tilde x\rfloor,\lfloor \tilde y\rfloor\bigr)
        \le \frakp_{\beta}\bigl(\lfloor \tilde x\rfloor,\lfloor \tilde y\rfloor\bigr)
\end{equation}
provided $|x|$ is sufficiently large. Letting~$\scrE'$ be a sample of edge configuration with probabilities $\frakp_{\beta(1-2\epsilon)}$ which we assume independent of~$\scrI$ and~$\scrI'$, we can couple the above processes to a sample~$\scrE$ of edge configurations with probabilities~$\frakp_\beta$ so that
\begin{equation}
\label{E:3.43}
\bigl\{(\lfloor \tilde x\rfloor,\lfloor \tilde y\rfloor)\colon (\tilde x,\tilde y)\in\scrI\bigr\}\cup\scrE'\subseteq\scrE.
\end{equation}
We then use $\scrI\cup\scrI'$ to pick a pair $(X,Y)$ minimizing \eqref{E:3.14}, define $(Z,Z')$ from these as in \eqref{E:3.15} and $A(x)$ as in \eqref{E:3.16} unless~$|x|$ is small, in which case we set~$A(x)$ to the whole probability space. 

On $A(x)^\cc$ we are guaranteed $(X,Y)\in\scrI$ and so $(\lfloor X\rfloor,\lfloor Y\rfloor)\in\scrE$.  For the same reasons, the  fact that $\overline\gamma<1$  also gives 
\begin{equation}
B\bigl(0,2|X|^{{\overline\gamma}^{\,-(k+1)}}\bigr)\cup B\bigl(x,2|x-Y|^{{\overline\gamma}^{\,-(k+1)}}\bigr)\subseteq B\bigl(0,2|x|^{{\overline\gamma}^{\,-k}}\bigr)
\end{equation}
whenever~$ A(x)^\cc$ occurs.
Writing $\wt D_k'$ for the distances generated by~$\scrE'$ and~$\wt D_k$ for those generated by~$\scrE$, concatenating a path minimizing $\wt D_{k+1}\bigl(0,\lfloor X\rfloor\bigr)$ with edge~$(\lfloor X\rfloor,\lfloor Y\rfloor)$ and the path minimizing $\wt D_{k+1}\bigl(x,\lfloor Y\rfloor\bigr)$ produces a path contributing to the optimization underlying $\wt D_k\bigl(0,x\bigr)$. Thanks to \eqref{E:3.43} we thus get
\begin{equation}
\wt D_k\bigl(0,x\bigr)\le 
\wt D_{k+1}'\bigl(0,\lfloor X\rfloor\bigr)+\wt D_{k+1}'\bigl(x,\lfloor Y\rfloor\bigr)
+1+|x|_11_{A(x)}.
\end{equation}
Rewriting~$X$ and~$Y$ using $Z$ and~$Z'$, plugging for $W$  defined using~$\eta:=\epsilon$  for~$x$  and taking expectation, this yields \eqref{E:subadd2}. As a calculation shows, the change in normalization effectively replaces~$W$ by $\epsilon^{-\frac1{2d-s}}W$.
\end{proof}

We are now ready to give:

\begin{proofsect}{Proof of Theorem~\ref{thm-1}}
Let~$\beta\not\in\Sigma$. Summarizing the above developments, for~$W$ (defined using~$\eta:=1$) independent of~$D$ we have
\begin{equation} \label{E:dist-lim-prob-w}
    \frac{D \bigl( 0, \lfloor rW \rfloor \bigr)}{L_\beta(r)}
    \,\, \underset{r \to \infty}{\longrightarrow} \,\,
    1 \qquad \text{in probability and~$L^2$} .
\end{equation}
Indeed, the upper bound is supplied by Lemma~\ref{P:dist-1-plim} and $D(0,x)\le\wt D(0,x)$, while the lower bound follows from Lemma~\ref{lemma-3.3} and Proposition~\ref{L:dist-k-lwr}.

Fix~$\delta\in(0,1)$. Using that~$W$ admits a probability  density  $f$, the expectation of the quantity on the left of  \eqref{E:1.4a} is bounded by
\begin{equation}
\frac1{r^d}\# B(0,\delta r^d)+ \frac{c_r(\delta)}{\epsilon} E_\beta\otimes E_W\biggl(\Bigl|\frac{D ( 0, \lfloor rW \rfloor )}{L_\beta(r)}-1\Bigr|\biggr),
\end{equation}
where
\begin{equation}
c_r(\delta):=\max_{x\in B(0,r)\smallsetminus B(0,\,\delta r)}\Bigl(\int_{x+[0,1]^d}f(z/r)\,\textd z\Bigr)^{-1}.
\end{equation}
Since~$f$ is continuous  and  positive on $ \R^d\smallsetminus\{0\}$ we have~$\sup_{r\ge 1}c_r(\delta)<\infty$. The second term thus tends to zero as~$r\to\infty$ by \eqref{E:dist-lim-prob-w}. Noting that $r^{-d}\# B(0,\delta r)\le c\delta^d$, the claim follows by taking~$r\to\infty$ and~$\delta\downarrow0$.
\end{proofsect}

\section{Proof of Theorem~\ref{thm-2}: upper bound}
\noindent
Moving to our second main result, we will now put the heuristic derivation from Section~\ref{sec-2.1} on rigorous footing by proving separately upper and lower bounds on~$\phi_\beta$ that match the desired asymptotic in the limit as~$\beta\to\infty$. Here we will prove the upper bound. Throughout we fix~$d\ge1$ and $s\in(d,2d)$ and denote, as before, $\gamma:=\frac{s}{2d}$.

\subsection{Key proposition and preliminaries}
The argument again relies broadly on the subadditivity bound in Proposition~\ref{P:subadd} and subsequent derivations relying on the random variable~$W$. However, the need to include the asymmetric case ($\gamma_1\ne\gamma_2$) robustly and to track all relevant $\beta$-dependent terms explicitly requires additional care. 

Let~$P_\beta$ denote the law of the edges with connection probabilities $\frakp_\beta$ and let~$E_\beta$ denote the associated expectation. Given~$\beta>0$, fix~$\eta\in(0, 1)$ and let~$Z$ henceforth denote the $\R^d$-valued random variable with law
\begin{equation}
\label{E:4.1}
P(Z\in B)=\sqrt\eta\int_B\texte^{-\eta c_0|z|^{2d}}\textd z,
\end{equation}
where $c_0$ is as in \eqref{E:3.5}. 
Given i.i.d.\ copies $\{Z_n\}_{n\ge0}$ of~$Z$, for any sequence $\{\gamma_n\}_{n\ge1}\subseteq [0,1]$ with $\sup_{n\ge1}\gamma_n<1$ let
\begin{equation}
\label{E:4.2a}
W_{\{\gamma_n\}_{n\ge1}}:=Z_0\prod_{n\ge1}|Z_n|^{\prod_{k=1}^n\gamma_k}.
\end{equation}
Since $Z_n\ne0$ a.s.\ with $n\mapsto\log|Z_n|$ growing slower than polynomially while $n\mapsto\prod_{k=1}^n\gamma_k$ decays exponentially, the infinite product converges to a finite and non-zero number a.s. Let
\begin{equation}
\WW:=\biggl\{W_{\{\gamma_n\}_{n\ge1}}\colon \{\gamma_n\}_{n\ge1}\subseteq \Bigl[0
,\frac{2\gamma}{1+\gamma}\Bigr]\biggr\}
\end{equation}
and denote by~$E_W$ the expectation with respect to the law of~$W\in\WW$. Introduce another exponent sequence $\{\vartheta_n\}_{n\ge1}$ by $\vartheta_0:=1$ and
\begin{equation}
\label{E:4.2}
\forall n\ge0\colon \quad\vartheta_{n+1}:=(2\gamma)^{-1}\bigl(\vartheta_{\lfloor n/2\rfloor}+\vartheta_{\lceil n/2\rceil}\bigr).
\end{equation}
Finally, let us use a different way of rounding points in~$\R^d$ by writing $[x]$ to denote the unique~$z\in\Z^d$ with $x-z\in[-1/2,1/2)^d$. (We need this to ensure that $|[x]|_1\le2|x|_1$.) The upper bound in \eqref{E:1.5} will be derived primarily from:

\begin{proposition}
\label{prop-4.3}
There is $\kappa_0>0$ and a function $\chi\colon (0,\infty)\times(0,\infty)\to(0,\infty)$ satisfying
\begin{equation}
\label{E:4.5i}
\lim_{\alpha\to\infty}\,\limsup_{\beta\to\infty}\chi(\alpha,\beta)=0
\end{equation}
such that for all $\alpha,\beta\ge\kappa_0$ with~$\alpha^{-1}\log\beta\ge\kappa_0$ and all $n\ge1$, 
\begin{equation}
\label{E:4.4}
\sup_{W\in\WW}\,
E_\beta\otimes E_W\bigl( D(0,[\texte^{-\alpha\vartheta_n}\beta^{\theta_n} W])\bigr)\le n\bigl[1+\chi(\alpha,\beta)\bigr].
\end{equation}
Here the  product of expectations indicates that~$W$ and~$D(\cdot,\cdot)$ are regarded as independent.
\end{proposition}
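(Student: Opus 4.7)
The proof should proceed by strong induction on $n$, iterating the subadditivity inequality of Proposition~\ref{P:subadd} along a binary recursion tree whose branching mirrors the recursion $\vartheta_{n+1} = (2\gamma)^{-1}(\vartheta_{\lfloor n/2\rfloor} + \vartheta_{\lceil n/2\rceil})$. A key preliminary identity, verifiable by induction from this recursion together with the $\theta$-recursion in Definition~\ref{def-1} and the explicit values in Lemma~\ref{lemma-2}, is
\begin{equation*}
\theta_n = \frac{\vartheta_n - 1}{2d-s} \qquad \text{for every } n \ge 0,
\end{equation*}
which ties the $\alpha$- and $\beta$-exponents of the scale $e^{-\alpha \vartheta_n}\beta^{\theta_n}$ together and is what makes the algebra of the inductive step close.

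For the base case $n=1$ the target $[e^{-\alpha/\gamma}\beta^{1/s}W]$ is joined to the origin by a direct edge except on an event whose probability is at most $\exp(-c\,e^{2d\alpha}|W|^{-s})$ for some $c>0$ depending only on $d$, $s$, and $\frakq$; on the complement one uses the crude $D\le|\cdot|_1$ bound, and the super-exponential tails of $Z$ in \eqref{E:4.1} translate (via the product representation \eqref{E:4.2a}) into uniform moment bounds on $W\in\WW$ that force the expected contribution of this event to be $o_\alpha(1)$ uniformly in $W\in\WW$. For the inductive step I would apply Proposition~\ref{P:subadd} with the specific exponents
$$\gamma_1 := \frac{\vartheta_{\lfloor n/2\rfloor}}{\vartheta_{n+1}}, \qquad \gamma_2 := \frac{\vartheta_{\lceil n/2\rceil}}{\vartheta_{n+1}},$$
which automatically satisfy $\gamma_1+\gamma_2 = 2\gamma$ (by the $\vartheta$-recursion) and can be checked to lie in $(0,\,2\gamma/(1+\gamma)]$. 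Writing $|x| = e^{-\alpha\vartheta_{n+1}}\beta^{\theta_{n+1}}|W|$ and using the identity above, a short computation rewrites $|x|^{\gamma_i}Z$ in the form $e^{-\alpha_i'\vartheta_{k_i}}\beta^{\theta_{k_i}}\cdot(|W|^{\gamma_i}Z)$ with $k_i \in\{\lfloor n/2\rfloor,\lceil n/2\rceil\}$ and $\alpha_i' := \alpha - (1-\gamma_i)\log\beta/((2d-s)\vartheta_{k_i})$, and the random variable $|W|^{\gamma_i}Z$ again lies in $\WW$ because its defining sequence is just $\gamma_i$ prepended to that of $W$. Applying the inductive hypothesis at the smaller indices $k_i$ with parameters $\alpha_i'$, then folding in the ``$+1$'' and the super-polynomially small term $E[|x|_1\Bbbone_{A(x)}]$ from \eqref{E:A-prob}, yields the bound at level $n+1$.

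The main obstacle is parameter book-keeping. A short calculation shows that the quantity $\alpha + v_n$, where $v_n := \log\beta/((2d-s)\vartheta_n)$, is \emph{invariant} under the split $n+1 \mapsto k_i$, so $\alpha_i'<\alpha$ strictly, and after enough splits one reaches a leaf with parameter $\alpha_{\rm leaf} = \alpha + v_{n+1} - v_1$. Since $v_1 = \gamma\log\beta/(2d-s)$ can be comparable to $\log\beta$, making the base-case estimate effective at every leaf is the delicate point; this may force a slight variant of the exponent choice above --- for instance matching $\theta_{k_i}/\theta_{n+1}$ instead of $\vartheta_{k_i}/\vartheta_{n+1}$ and compensating by an $O(1/\theta_{n+1})$ shift to preserve $\gamma_1+\gamma_2=2\gamma$ --- and is precisely where the hypothesis $\alpha^{-1}\log\beta\ge\kappa_0$ has to be calibrated. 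The complementary part of the book-keeping, namely that the edge-failure errors $E[|x|_1\Bbbone_{A(x)}]$ telescope over the recursion tree to a total additive error of $n\chi(\alpha,\beta)$ with $\chi$ satisfying \eqref{E:4.5i}, is routine given the super-polynomial decay in \eqref{E:A-prob} combined with the uniform moment bounds on $W\in\WW$.
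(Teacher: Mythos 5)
Your proposal correctly identifies the recursive skeleton (a binary tree mirroring $\vartheta_{n+1}=(2\gamma)^{-1}(\vartheta_{\lfloor n/2\rfloor}+\vartheta_{\lceil n/2\rceil})$), the right exponents $\gamma_i=\vartheta_{k_i}/\vartheta_{n+1}$ with $\gamma_1+\gamma_2=2\gamma$, the base-case idea, and even the true identity $\theta_n=(\vartheta_n-1)/(2d-s)$, so you are very close in spirit to the paper's argument. But there is a genuine gap in how you propose to run the induction, and in fact the quantity you highlight as an ``invariant'' is precisely a diagnosis of the problem rather than a path around it.

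The issue is that you apply Proposition~\ref{P:subadd} as stated, in which the split produces arguments $|x|^{\gamma_i}Z$ with no $\beta$-correction. Substituting $|x|=\texte^{-\alpha\vartheta_{n+1}}\beta^{\theta_{n+1}}|W|$ and using $\gamma_1\vartheta_{n+1}=\vartheta_{\lfloor n/2\rfloor}$ and $\theta_m=(\vartheta_m-1)/(2d-s)$ gives
\begin{equation*}
|x|^{\gamma_1}Z
=\texte^{-\alpha\vartheta_{\lfloor n/2\rfloor}}\,\beta^{\theta_{\lfloor n/2\rfloor}}\,\beta^{(1-\gamma_1)/(2d-s)}\,|W|^{\gamma_1}Z,
\end{equation*}
i.e.\ an \emph{extra} multiplicative factor $\beta^{(1-\gamma_1)/(2d-s)}>1$ that has to be absorbed by decreasing $\alpha$ to $\alpha'_1=\alpha-(1-\gamma_1)\log\beta/[(2d-s)\vartheta_{\lfloor n/2\rfloor}]$, exactly as you compute. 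Your invariant $\alpha+v_n$ (with $v_n:=\log\beta/[(2d-s)\vartheta_n]$) then gives $\alpha_{\rm leaf}=\alpha+v_{n+1}-v_1$ at a leaf at level $1$, and since $v_1=\gamma\log\beta/(2d-s)$ while $v_{n+1}\to0$, in the operating regime $\alpha^{-1}\log\beta\ge\kappa_0$ (with $\kappa_0$ large) one has $\alpha_{\rm leaf}\approx\alpha-\gamma\log\beta/(2d-s)\ll0$. That is fatal: at such a leaf the target scale $\texte^{-\alpha_{\rm leaf}/\gamma}\beta^{1/s}|W|$ is much larger than $\beta^{1/s}$, the probability of a direct edge decays, and the base-case bound $E\wt D\le 1+o(1)$ simply does not hold. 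Since the Proposition must cover arbitrarily large $\alpha^{-1}\log\beta$, the degradation cannot be calibrated away, and your alternative choice $\gamma_i\approx\theta_{k_i}/\theta_{n+1}$ still drives $\alpha$ strictly downward (a short computation with $\theta_m=(\vartheta_m-1)/(2d-s)$ shows the ratio $\theta_{k_i}\vartheta_{n+1}/(\theta_{n+1}\vartheta_{k_i})<1$) and moreover breaks the constraint $\gamma_1+\gamma_2=2\gamma$, so Proposition~\ref{P:subadd} no longer applies without a further unproved modification.

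What the paper does, and what your proposal is missing, is to first derive a \emph{$\beta$-carrying} version of the subadditive bound (Lemma~\ref{lemma-4.4a}) whose split reads
\begin{equation*}
\wt D(0,[x])
\,\overset{\text{\rm law}}\le\,
\wt D\bigl(0,[\beta^{\theta_{\lfloor n/2\rfloor}-\gamma_1\theta_{n+1}}|x|^{\gamma_1}Z]\bigr)
+\wt D'\bigl(0,[\beta^{\theta_{\lceil n/2\rceil}-\gamma_2\theta_{n+1}}|x|^{\gamma_2}Z']\bigr)
+1+2|x|_1 1_{A_{n,\beta}(x)},
\end{equation*}
obtained by inserting the $\beta$-factors into the minimized functional $f_{\beta,n,x}$ (the modified \eqref{E:3.14}). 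With $|x|=\texte^{-\alpha\vartheta_{n+1}}\beta^{\theta_{n+1}}|W|$ and the choice $\gamma_i=\vartheta_{k_i}/\vartheta_{n+1}$, the prefactors $\beta^{\theta_{k_i}-\gamma_i\theta_{n+1}}$ exactly cancel the surplus and both exponents close simultaneously: the arguments become $\texte^{-\alpha\vartheta_{k_i}}\beta^{\theta_{k_i}}|W|^{\gamma_i}Z$ with the \emph{same} $\alpha$. The induction then runs with $\alpha$ fixed, and the condition $\alpha^{-1}\log\beta\ge\kappa_0$ is used not to control a drift in $\alpha$ (there is none) but to guarantee the ``$|x|$ large'' hypothesis of Lemma~\ref{lemma-4.4a} at every level (the paper's \eqref{E:4.34}), in conjunction with a truncation on $\{|W|\le r\}$. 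That $\beta$-tracking subadditivity lemma is the key missing ingredient in your plan; the rest of your outline (base cases $n=0,1$, admissibility of $|W|^{\gamma_i}Z\in\WW$, tail-and-moment control over $\WW$, and the accumulation of the error terms into $n\chi(\alpha,\beta)$) is aligned with the paper's execution.
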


The proof requires a number of preliminary considerations. A key input is a version of Proposition~\ref{P:subadd} that carries all the $\beta$-dependent factors explicitly.

\begin{lemma}
\label{lemma-4.4a}
For each $\eta\in(0,1)$ and~$\overline\gamma\in(\gamma,1)$ there is $c= c(\eta,\overline\gamma)\in(0,\infty)$ such that the following holds: Writing~$\wt D$ and~$\wt D'$ for independent copies of the restricted distance under~$P_\beta$ and~$Z$ and~$Z'$ for independent copies of the random variable \eqref{E:4.1}, independent of~$D$ and~$D'$, for all $x\in\R^d$, $n\ge0$, $\beta>0$ and $\gamma_1,\gamma_2\in(0,\overline\gamma)$ with $\gamma_1+\gamma_2=2\gamma$ we have
\begin{multline} 
\label{E:4.5}
\quad
\wt D\bigl( 0, [x]\bigr)
    \,\overset{\text{\rm law}}\le\, \wt D \bigl( 0, [\beta^{\theta_{\lfloor n/2\rfloor}-\gamma_1\theta_{n+1}}|x|^{\gamma_1}Z ] \bigr)
    \\
    + \wt D' \bigl( 0, [\beta^{\theta_{\lceil n/2\rceil}-\gamma_2\theta_{n+1}} |x|^{\gamma_2}Z' ] \bigr)
    + 1 + 2|x|_1 1_{A_{n,\beta}(x)},
    \quad
\end{multline}
where
\begin{equation}
\label{E:4.7}
A_{n,\beta}(x):=\Bigl\{|Z|>\beta^{-\theta_{\lfloor n/2\rfloor}+\gamma_1\theta_{n+1}}|x|^{\overline\gamma-\gamma_1}\Bigr\}\cup\Bigl\{|Z'|>\beta^{-\theta_{\lceil n/2\rceil}+\gamma_2\theta_{n+1}}|x|^{\overline\gamma-\gamma_2}\Bigr\}
\end{equation}
when $|x|> c(\eta,\overline\gamma)(1+\beta^{1/s})$ and $A_{n,\beta}(x)$ is the whole probability space otherwise.
\end{lemma}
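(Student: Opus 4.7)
The plan is to rerun the argument of Proposition~\ref{P:subadd} nearly verbatim, but with the objective function whose minimum selects the shortcut edge multiplied by $\beta$-dependent scaling factors designed so that the endpoints of the shortcut land at the target scales $\beta^{\theta_{\lfloor n/2\rfloor}}$ and $\beta^{\theta_{\lceil n/2\rceil}}$ (up to unit-scale random multiples). If $|x| \le c(\eta,\overline\gamma)(1+\beta^{1/s})$ the bound is immediate from $\wt D(0,[x]) \le |[x]|_1 \le 2|x|_1$ together with the convention that $A_{n,\beta}(x)$ is the whole probability space, so I may assume $|x|$ is large throughout.

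Introduce $\mu,\mu'$ as in \eqref{E:3.8}--\eqref{E:3.9} and, as in the proof of Proposition~\ref{P:subadd}, couple the union $\scrI\cup\scrI'$ of independent Poisson processes of intensities $\mu,\mu'$ to the edge set $\scrE$ so that points of $\scrI$ (now rounded by $[\cdot]$ rather than $\lfloor\cdot\rfloor$, which is immaterial) correspond to edges of $\scrE$. Replace the objective $f_x$ from \eqref{E:3.14} by
\begin{equation}
\tilde f_x(\tilde x,\tilde y) := \bigl(\lambda_1|x|^{-\gamma_1}|\tilde x|\bigr)^{2d} + \bigl(\lambda_2|x|^{-\gamma_2}|\tilde y-x|\bigr)^{2d},
\end{equation}
for parameters $\lambda_1,\lambda_2>0$ to be fixed, and let $(X,Y)\in\scrI\cup\scrI'$ be the a.s.~unique minimizer. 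Setting $Z := \lambda_1|x|^{-\gamma_1}X$ and $Z' := \lambda_2|x|^{-\gamma_2}(Y-x)$, the change of variable underlying \eqref{E:3.14a} shows that $(Z,Z')$ are i.i.d.\ with density $\sqrt\eta\,\texte^{-\eta c_0|\cdot|^{2d}}$ precisely when $(\lambda_1\lambda_2)^d = \beta$. Accordingly, choose
\begin{equation}
\lambda_1 := \beta^{\gamma_1\theta_{n+1}-\theta_{\lfloor n/2\rfloor}}, \qquad \lambda_2 := \beta^{\gamma_2\theta_{n+1}-\theta_{\lceil n/2\rceil}};
\end{equation}
using $\gamma_1+\gamma_2 = 2\gamma = s/d$ together with the identity $s\theta_{n+1} = 1 + d(\theta_{\lfloor n/2\rfloor}+\theta_{\lceil n/2\rceil})$ supplied by Lemma~\ref{lemma-3.2}, one finds $\lambda_1\lambda_2 = \beta^{1/d}$ as required. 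These choices produce $X = \beta^{\theta_{\lfloor n/2\rfloor}-\gamma_1\theta_{n+1}}|x|^{\gamma_1}Z$ and $Y-x = \beta^{\theta_{\lceil n/2\rceil}-\gamma_2\theta_{n+1}}|x|^{\gamma_2}Z'$, matching the arguments of $\wt D$ and $\wt D'$ in \eqref{E:4.5}.

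On $A_{n,\beta}(x)^\cc$ --- which, by \eqref{E:4.7} and the displayed formulas for $X, Y-x$, is exactly the event $\{|X|\le|x|^{\overline\gamma}\}\cap\{|Y-x|\le|x|^{\overline\gamma}\}$ --- the minimizer $(X,Y)$ lies in $\scrI$, hence $([X],[Y])\in\scrE$, and concatenating paths realizing $\wt D(0,[X])$ and $\wt D(x,[Y])$ with the edge $([X],[Y])$ yields a path inside $B(0,2|x|)$ contributing to $\wt D(0,[x])$. The independence of the two shortcut distances from each other and from $(Z,Z')$ is inherited from \eqref{E:restr-indep} exactly as in \eqref{E:3.13}, and translation invariance of the law of $\wt D$ rewrites the second piece as $\wt D'(0,[Y]-x)$. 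On $A_{n,\beta}(x)$ we fall back to the trivial bound $\wt D(0,[x])\le 2|x|_1$. The main obstacle I anticipate is verifying that the Poisson-to-edge domination \eqref{E:dis-ctx-couple} survives the modified scaling: this still hinges on $|\tilde x-\tilde y|\gtrsim|x|$ in the relevant cell, which is precisely why the threshold on $|x|$ must now be a constant multiple of $1+\beta^{1/s}$ rather than the $\eta$-only threshold of Proposition~\ref{P:subadd}. Once this is in place, everything else is bookkeeping, as the $\lambda_i$ affect only the algebraic identification of $(Z,Z')$ and not the geometric control of $A_{n,\beta}(x)$.
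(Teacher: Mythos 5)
Your proof is correct and follows the paper's argument closely: the key step, replacing the objective $f_x$ of \eqref{E:3.14} by a version with multiplicative factors $\lambda_1,\lambda_2$ and then solving for these so that $(\lambda_1\lambda_2)^d=\beta$, is exactly the paper's $f_{\beta,n,x}$, and your verification of $\lambda_1\lambda_2=\beta^{1/d}$ via $\gamma_1+\gamma_2=s/d$ and $s\theta_{n+1}=1+d(\theta_{\lfloor n/2\rfloor}+\theta_{\lceil n/2\rceil})$ is what the paper uses. One small inaccuracy in your closing remark: the threshold in Proposition~\ref{P:subadd} already depended on $\beta$ (and on $\frakq$), not on $\eta$ alone; what Lemma~\ref{lemma-4.4a} does is make that $\beta$-dependence quantitative as $c(\eta,\overline\gamma)(1+\beta^{1/s})$ because the domination $\mu(\text{cell}\times\text{cell})\le\frakp_\beta$ requires $\beta\frakq(\cdot)$, hence $\beta|x|^{-s}$, to be small enough to linearize $1-\texte^{-\beta\frakq}$.
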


\begin{proofsect}{Proof}
Let~$\beta>0$, $\eta\in(0,1)$ and~$\overline\gamma\in(\gamma,1)$. Modulo some trivial modifications due to a different way of rounding, the early parts of the proof run almost exactly as in Proposition~\ref{P:subadd}. The measures $\mu$ and~$\mu'$ are just as in \twoeqref{E:3.8}{E:3.9} and the processes~$\scrI$ and~$\scrI'$ are as before. The requirement that~$|x|$ is  larger  than an $\eta,\overline\gamma$-dependent constant still ensures \eqref{E:3.12}. To get \eqref{E:3.11} we need
\begin{equation}
\mu \Bigl( \bigl( \lfloor\tilde x\rfloor + [0,1)^d \bigr)
        \times \bigl( \lfloor\tilde y\rfloor + [0,1)^d \bigr) \Bigr)
        \le\frakp_\beta\bigl(\lfloor\tilde x\rfloor-\lfloor\tilde y\rfloor\bigr)
\end{equation}
whenever $(\tilde x,\tilde y)\in\scrI$ and, in particular, whenever $(\tilde x,\tilde y)$ lies in the support of~$\mu$. It is here we need to assume that $\eta<1$ and that $|x|$ exceeds an $\eta,\overline\gamma$-dependent  multiple of~$\beta^{1/s}$. We will write  the combined restrictions on~$x$  as $|x|\ge c(\eta,\overline\gamma)(1+\beta^{1/s})$.

 The most  important change comes in the construction of the pair~$X$ and~$Y$. Instead of \eqref{E:3.14}, here we optimize the function
\begin{equation}
f_{\beta,n,x}(\tilde x, \tilde y) := \bigl(\beta^{-\theta_{\lfloor n/2\rfloor}+\gamma_1\theta_{n+1}} |x|^{-\gamma_1} |\tilde x| \bigr)^{2d}
    +  \bigl(\beta^{-\theta_{\lceil n/2\rceil}+\gamma_2\theta_{n+1}} |x|^{-\gamma_2} |\tilde y-x| \bigr)^{2d} .
\end{equation}
Using this instead of~$f_x$ in the formula for the joint law \eqref{E:3.14a} and setting
\begin{equation}
    Z := \beta^{-\theta_{\lfloor n/2\rfloor}+\gamma_1\theta_{n+1}}|x|^{-\gamma_1} X
    \quad \wedge \quad
    Z' := \beta^{-\theta_{\lceil n/2\rceil}+\gamma_2\theta_{n+1}}|x|^{-\gamma_2} (Y-x),
\end{equation}
with the help of $\gamma_1+\gamma_2=2\gamma$ and $\theta_{n+1}=\frac1s+\frac ds(\theta_{\lfloor n/2\rfloor}+\theta_{\lceil n/2\rceil})$ we then verify that~$Z$ and~$Z'$ are independent with law \eqref{E:4.1}. 
Definining~$A_{n,\beta}$ as in the statement and invoking $|[x]|_1\le 2|x|_1$, the rest of the proof of Proposition~\ref{P:subadd} applies to give us \eqref{E:4.5}.
\end{proofsect}

In order to use \eqref{E:4.5} fruitfully, we will need the following observations about the exponent sequence~$\{\theta_n\}_{n\ge1}$ and the auxiliary sequence~$\{\vartheta_n\}_{n\ge1}$:

\begin{lemma}
$\{\vartheta_n\}_{n\ge1}$ is strictly increasing with
\begin{equation}
\label{E:4.3}
\sup_{n\ge0}\frac{\vartheta_{\lceil n/2\rceil}}{\vartheta_{n+1}}= \frac{\vartheta_1}{\vartheta_2}=\frac{2\gamma}{1+\gamma} .
\end{equation}
In addition, we also have
\begin{equation}
\label{E:theta-ratio}
\sup_{n\ge0}\frac{\theta_{\lceil n/2\rceil}}{\theta_{n+1}}=\gamma
\end{equation}
and
\begin{equation}
\label{E:4.5a}
\sup_{n\ge1}\frac{\vartheta_n}{\theta_{n}}=\max\Bigl\{\frac{\vartheta_1}{\theta_1},\frac{\vartheta_2}{\theta_2}\Bigr\}<\infty.
\end{equation}
\end{lemma}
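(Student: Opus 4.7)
The key observation is that, after invoking the concavity established in Lemma~\ref{lemma-3.2} to identify the maximizing index, the $\theta$-recursion can be written uniformly as $\theta_{n+1}=\frac1s+(2\gamma)^{-1}(\theta_{\lfloor n/2\rfloor}+\theta_{\lceil n/2\rceil})$. This matches the defining recursion for $\vartheta$ except for the additive term $1/s$ and the initial value ($\theta_0=0$ versus $\vartheta_0=1$). This parallel structure drives all four claims.

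For Part~(1), a direct strong induction on $n$ works: the recursion gives $\vartheta_{n+1}-\vartheta_n=(2\gamma)^{-1}(\vartheta_m-\vartheta_{m-1})$ for some $m\le n$ (the precise $m$ depending on parity of~$n$), and the base case $\vartheta_1-\vartheta_0=\gamma^{-1}-1>0$ is immediate. For Part~(2), the $\vartheta$-recursion reduces $\vartheta_{\lceil n/2\rceil}/\vartheta_{n+1}$ to $\gamma$ for even~$n$ and to $2\gamma/(1+\vartheta_m/\vartheta_{m+1})$ for odd $n=2m+1$. Since $\gamma<2\gamma/(1+\gamma)$ (as $\gamma<1$), the supremum is governed by the odd indices and reduces to the estimate $\vartheta_{m+1}/\vartheta_m\le\gamma^{-1}$, with equality only at $m=0$. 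I would prove this by a further induction on~$m$, splitting the step by parity; the verifications require only $\gamma^{-1}\le 1/(2\gamma-1)$, which is equivalent to $\gamma\le 1$, and yield strictness for $m\ge 1$ since $\gamma<1$.

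For Part~(3), the inequality $\theta_{\lceil n/2\rceil}/\theta_{n+1}\le\gamma$ is equivalent via the $\theta$-recursion to $\theta_{\lceil n/2\rceil}-\theta_{\lfloor n/2\rfloor}\le 2\gamma/s$. This is trivial for even~$n$ and reduces for odd $n=2k+1$ to the consecutive-difference bound $\theta_{k+1}-\theta_k\le 2\gamma/s$. From the explicit values at $k=2^j-1$ in Lemma~\ref{lemma-2} one computes $\theta_{k+1}-\theta_k=(s(2\gamma)^j)^{-1}$ for $k\in[2^j-1,2^{j+1}-1)$; the maximum over all $k$ is $1/s$ (at $j=0$), and the bound $1/s\le 2\gamma/s$ holds precisely because $s>d$, i.e., $\gamma>1/2$. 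That the supremum equals $\gamma$ (rather than being strictly below) follows from the even-index ratio $\theta_\ell/\theta_{2\ell+1}=(\gamma^{-1}+1/(s\theta_\ell))^{-1}\to\gamma$ as $\ell\to\infty$.

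For Part~(4), set $\alpha_n:=\vartheta_n/\theta_n$ and $M:=\max\{\alpha_1,\alpha_2\}$; the plan is to show $\alpha_n\le M$ for all $n\ge 1$ by strong induction. The inductive step, valid for $n+1\ge 3$, uses that both $\lfloor n/2\rfloor,\lceil n/2\rceil\ge 1$, so the hypothesis gives $\vartheta_{\lfloor n/2\rfloor}+\vartheta_{\lceil n/2\rceil}\le M(\theta_{\lfloor n/2\rfloor}+\theta_{\lceil n/2\rceil})$, and combined with $\theta_{n+1}\ge(2\gamma)^{-1}(\theta_{\lfloor n/2\rfloor}+\theta_{\lceil n/2\rceil})$ coming from the inhomogeneous $1/s$ term, this yields $\alpha_{n+1}\le M$. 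Finiteness is immediate from $\alpha_1=s\gamma^{-1}=2d$. The main subtlety --- and precisely the reason the statement must include \emph{both} $\alpha_1$ and $\alpha_2$ in the maximum rather than just $\alpha_1$ --- is that this recursive bound breaks at $n=1$: since $\theta_0=0$ while $\vartheta_0=1$, the ratio $\alpha_0$ is infinite and cannot enter the inductive step, so $\alpha_2$ must be verified as a separate base case alongside $\alpha_1$.
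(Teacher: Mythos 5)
Your argument is correct and, in broad strokes, follows the same skeleton as the paper's (exploit the parallel between the $\theta$- and $\vartheta$-recursions, treat $n=1,2$ as base cases, and induct). The noticeable differences are in Parts~(2) and~(3), where you are somewhat more computational.

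For the first two claims the paper's proof of strict monotonicity is slightly stronger than yours: it shows that the increment $n\mapsto\vartheta_{n+1}-\vartheta_n$ is positive \emph{and non-increasing}, whereas you only establish positivity. The paper then writes
\begin{equation*}
\frac{\vartheta_{\lceil n/2\rceil}}{\vartheta_{n+1}}=\gamma+\frac{\vartheta_{\lceil n/2\rceil}-\vartheta_{\lfloor n/2\rfloor}}{2\vartheta_{n+1}},
\end{equation*}
so that for $n$ odd the numerator is an increment (hence non-increasing in $n$) while the denominator is increasing, and the supremum over odd~$n$ is immediately attained at $n=1$. Because you only kept positivity, you need the auxiliary induction $\vartheta_{m+1}/\vartheta_m\le\gamma^{-1}$, which costs you an extra two-parity case analysis. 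Your induction checks out (the required numerical inequalities indeed reduce to $\gamma\le1$, and $2\gamma-1>0$ holds since $s>d$), but it is genuinely additional work that the paper avoids. For Part~(3), the paper uses the same normalized identity with an extra $-1/d$ shift and then appeals to the concavity of $k\mapsto\theta_k$ from Lemma~\ref{lemma-3.2} to see that $\theta_{\lceil n/2\rceil}-\theta_{\lfloor n/2\rfloor}$ is maximal at $n=1$; you instead compute the consecutive differences explicitly via the piecewise-linear formula from Lemma~\ref{lemma-2}, reaching the same bound $1/s\le1/d$. Your proof that the supremum is attained in the limit via the even-index ratio is a fine substitute for the paper's observation that $\theta_{n+1}\to\infty$. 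Your Part~(4) is essentially identical to the paper's, including the observation that the $1/s$ offset in the $\theta$-recursion is what makes the inequality strict, and your explanation of why both $\alpha_1$ and $\alpha_2$ are needed (because $\theta_0=0$ makes $\alpha_0$ infinite) is exactly the point the paper is silently relying on.
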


\begin{proofsect}{Proof}
While the values of~$\{\vartheta_n\}_{n\ge1}$ can be computed explicitly following similar arguments as in the proof of Lemmas~\ref{lemma-3.2}--\ref{lemma-2}, for above claims we will only need the following: Using \eqref{E:4.2} we get
\begin{equation}
\forall n\ge0\colon\quad \vartheta_{2n+1}-\vartheta_{2n} = (2\gamma)^{-1}(\vartheta_{n+1}-\vartheta_n)
\end{equation}
and
\begin{equation}
\forall n\ge1\colon\quad \vartheta_{2n}-\vartheta_{2n-1} = (2\gamma)^{-1}(\vartheta_{n}-\vartheta_{n-1})
\end{equation}
which, in light of~$2\gamma>1$ and $\vartheta_1=\gamma^{-1}>1=\vartheta_0$, shows that~$n\mapsto\vartheta_{n+1}-\vartheta_n$ is positive and non-increasing and thus $n\mapsto\vartheta_n$ is positive and strictly increasing. 

For \eqref{E:4.3} we then use \eqref{E:4.2} to write 
\begin{equation}
\label{E:4.7a}
\frac{\vartheta_{\lceil n/2\rceil}}{\vartheta_{n+1}}= \gamma+\frac{\vartheta_{\lceil n/2\rceil}-\vartheta_{\lfloor n/2\rfloor}}{2\vartheta_{n+1}}.
\end{equation}
The numerator on the right vanishes for~$n$ even while for~$n$ odd it is of the form~$\vartheta_{k+1}-\vartheta_k$, and is thus positive and non-increasing in~$n$. Since the denominator is increasing in~$n$, 
the ratio in \eqref{E:4.7a} is maximized by the smallest odd~$n$, which is~$n=1$. A computation then gives \eqref{E:4.3}.

The argument for \eqref{E:theta-ratio} is very similar. Indeed, in light of \eqref{E:2.5}, we have 
\begin{equation}
\frac{\theta_{\lceil n/2\rceil}}{\theta_{n+1}}= \gamma+\frac{\theta_{\lceil n/2\rceil}-\theta_{\lfloor n/2\rfloor}-\frac1d}{2\theta_{n+1}}.
\end{equation}
 Since $\theta_{\lceil n/2\rceil}-\theta_{\lfloor n/2\rfloor}$ is non-negative and maximal at~$n:=1$ where it equals~$1/s$, the second term is non-positive for all~$n\ge1$. Using that~$\theta_{n+1}\to\infty$ as~$n\to\infty$, this gives \eqref{E:theta-ratio}. 

For \eqref{E:4.5a}, we invoke \eqref{E:2.5}, \twoeqref{E:2.3}{E:2.4} and \eqref{E:4.2} to get, for each~$n\ge1$,
\begin{equation}
\frac{\vartheta_{n+1}}{\theta_{n+1}}<
\frac{\vartheta_{\lceil n/2\rceil}+\vartheta_{\lfloor n/2\rfloor}}{\theta_{\lceil n/2\rceil}+\theta_{\lfloor n/2\rfloor}}.
\end{equation}
Thus, if $\vartheta_k\le a\theta_k$ for all~$1\le k\le\lceil n/2\rceil$, then $\vartheta_{n+1}< a\theta_{n+1}$. It follows that the maximum value of the ratio $\vartheta_n/\theta_n$ (for~$n\ge1$) occurs for either~$n=1$ or~$n=2$.
\end{proofsect}

To see why the bound \eqref{E:theta-ratio} is useful, we note:

\begin{corollary}
\label{cor-4.4}
Let~$c(\eta,\overline\gamma)$ and $A_{n,\beta}(x)$ be the constant and the event from Lemma~\ref{lemma-4.4a}. Then for all $x\in\R^d$ with $|x|>c(\eta,\overline\gamma)(1+\beta^{1/s})$,
\begin{equation} 
\label{E:4.6}
P \bigl( A_{n,\beta}(x) \bigr) \le 2\max_{i=1,2}\,P\Bigl(|Z|>\beta^{-(\gamma-\gamma_i)\theta_{n+1}}|x|^{\overline\gamma-\gamma_i}\Bigr)
\end{equation}
hold for all $\beta\ge1$ and $n\ge0$.
\end{corollary}

\begin{proofsect}{Proof}
Using the union bound, this follows from \eqref{E:theta-ratio} and~$\beta\ge1$.
\end{proofsect}

The bounds \eqref{E:4.3} and \eqref{E:4.5a} will in turn be used in the proof of Proposition~\ref{prop-4.3}; specifically, after \eqref{E:4.25} and \eqref{E:4.34}. In addition to bounds on the exponent sequences, we will also need uniform control of the small-valued tail of  random variables~$W\in\WW$:

\begin{lemma}
\label{lemma-4.5a}
There are~$c,\zeta\in(0,\infty)$ such that for all~$r\in[0,1/2)$,
\begin{equation}
\label{E:4.19}
\sup_{W\in\WW}P_W(|W|\le r)\le c\, r^{\zeta}.
\end{equation}
Moreover,  for each natural~$m\ge1$,
\begin{equation}
\label{E:4.20a}
\sup_{W\in\WW} E_W\bigl( |W|^m\bigr)<\infty.
\end{equation}

\end{lemma}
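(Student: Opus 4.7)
The plan is to reduce both bounds to moment estimates on $|W|$ and exploit the product structure. Since the $Z_n$'s are i.i.d., setting $\alpha_n := \prod_{k=1}^n\gamma_k$ we have
\begin{equation}
E_W|W|^p = E|Z|^p \prod_{n\ge 1} E|Z|^{p\alpha_n}
\end{equation}
for any $p$ for which the right-hand side is well defined. The key structural input is that $\gamma_n \le \frac{2\gamma}{1+\gamma} < 1$ (using $\gamma < 1$), which forces geometric decay $\alpha_n \le \rho^n$ with $\rho := \frac{2\gamma}{1+\gamma}$, and in particular $\sum_{n\ge 1} \alpha_n \le \rho/(1-\rho)$ uniformly over $W \in \WW$.

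For \eqref{E:4.19}, I would first apply Markov's inequality to get $P_W(|W|\le r) \le r^\zeta E_W|W|^{-\zeta}$, then bound $E_W|W|^{-\zeta}$ uniformly. The density of $Z$ from \eqref{E:4.1} is bounded and $|z|^{-p}$ is integrable at the origin for $p < d$, so $g(p):=E|Z|^{-p}$ is finite on $[0, d)$. Being the Laplace transform of $-\log|Z|$, $g$ is convex with $g(0)=1$; fixing any $\zeta_0 \in (0, d)$ thus gives the linearization $g(p) \le 1 + \tfrac{p}{\zeta_0}(g(\zeta_0)-1)$ for $p \in [0, \zeta_0]$. For $\zeta \in (0, \zeta_0)$ we have $\zeta\alpha_n \le \zeta_0$ (since $\alpha_n \le 1$), so applying the linearization at $p := \zeta\alpha_n$ and using $1+u \le e^u$ yields
\begin{equation}
\prod_{n\ge 1} E|Z|^{-\zeta\alpha_n} \le \exp\Bigl(\tfrac{g(\zeta_0)-1}{\zeta_0}\,\zeta\sum_{n\ge 1} \alpha_n\Bigr) \le e^{C\zeta}
\end{equation}
with $C$ depending only on $d$, $\gamma$ and $\eta$. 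Combined with $E|Z|^{-\zeta} \le g(\zeta_0)$ this gives a uniform bound $E_W|W|^{-\zeta} \le C'$ and hence \eqref{E:4.19} with $\zeta$ any such fixed positive exponent.

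For \eqref{E:4.20a} the same scheme works with $p := 2$: the Gaussian-type tail $\texte^{-\eta c_0|z|^{2d}}$ gives $E|Z|^q < \infty$ for all $q \ge 0$, and $q \mapsto E|Z|^q$ is again convex. Thus $E|Z|^{2\alpha_n} \le 1 + \alpha_n(E|Z|^2-1)$ for $\alpha_n \in [0,1]$, and the resulting product is bounded by $\exp\bigl((E|Z|^2-1)\sum_{n\ge 1} \alpha_n\bigr)$, uniformly in $W \in \WW$. The main obstacle is really just the bookkeeping required to ensure that a single linearization bound on $g$ covers \emph{all} exponents $\zeta\alpha_n$ simultaneously, so that the product telescopes into the single uniformly convergent series $\sum_n \alpha_n$; the constraint $\gamma_n \le 2\gamma/(1+\gamma)$ built into the definition of $\WW$ is exactly what makes this possible, and convexity with $g(0)=1$ dispatches the linearization itself cleanly.
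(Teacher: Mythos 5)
Your proof is correct and takes a genuinely different route from the paper for the lower-tail bound~\eqref{E:4.19}. The paper decomposes the event $\{|W|\le \texte^t\}$ via a union bound over the summands $\alpha_n\log|Z_n|$ in $\log|W|$, bounding each piece directly with the density of~$Z$; you instead pass through Markov's inequality applied to $|W|^{-\zeta}$ and control the negative moment $E_W|W|^{-\zeta}$ uniformly via the product formula, geometric summability of $\alpha_n$, and convexity of $p\mapsto E|Z|^{-p}$. Your approach has the virtue of treating both parts of the lemma by the same ``product-of-moments plus linearization'' mechanism, whereas the paper uses two distinct devices (union bound for the tail, a monotone domination plus H\"older/Jensen for the second moment). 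For~\eqref{E:4.20a} you and the paper are doing essentially the same thing in different clothing: the paper's H\"older step $E[(|Z|\vee 1)^{2\rho_k}]\le (E[(|Z|\vee 1)^2])^{\rho_k}$ is Jensen with a concave power, while you use convexity of $q\mapsto E|Z|^q$; both are interpolation arguments feeding into $1+u\le\texte^u$ and $\sum_n\alpha_n<\infty$. Two small points you left implicit: (i) the identity $E_W|W|^p=\prod_n E|Z|^{p\alpha_n}$ requires passing to the limit $N\to\infty$ in the partial products; an inequality $\le$ (which is all you need) follows from Fatou, using that the infinite product defining $W$ converges a.s.\ to a finite nonzero limit. (ii) The claim $E|Z|^{-\zeta}\le g(\zeta_0)$ for $\zeta<\zeta_0$ presumes $g$ is nondecreasing; convexity with $g(0)=1$ gives $g(\zeta)\le\max\{1,g(\zeta_0)\}$, which suffices. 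Neither affects the validity of the argument.
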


\begin{proofsect}{Proof}
Let~$W\in\WW$ correspond to the sequence~$\{\gamma_n\}_{n\ge1}$. Abbreviate $\wt\gamma:=\frac{2\gamma}{1+\gamma}$ and pick $a\in(\wt\gamma,1)$. Then $\sum_{n\ge0}a^n(1-a)=1$ along with the union bound give, for each~$t\le0$,
\begin{equation}
\label{E:4.21}
\begin{aligned}
P_W\bigl(|W|\le \texte^t\bigr)&=P\biggl(\log|Z_0|+\sum_{n\ge1}\Bigl(\prod_{k=1}^n\gamma_k\Bigr)\log |Z_n|\le t\biggr)
\\
&\le P\bigl(\log|Z_0|\le (1-a)t\bigr)
+\sum_{n\ge1}P\biggl(\Bigl(\prod_{k=1}^n\gamma_k\Bigr)\log|Z_n|\le a^n(1-a)t\biggr)
\\
&\le \sum_{n\ge0}P\Bigl(\log|Z|\le (a/\wt\gamma)^n(1-a)t\Bigr),
\end{aligned}
\end{equation}
where, besides~$t\le0$, we used that~$\gamma_n\le\wt\gamma$ for all~$n\ge1$ in the last step.
Noting that the probability density of~$Z$ is at most~$\sqrt\eta\le1$, there is $\tilde c\in(0,\infty)$ depending only on~$d$ and the norm~$|\cdot|$ such that
\begin{equation}
\forall r\le1\colon\quad
P\bigl(\log|Z|\le (a/\wt\gamma)^n(1-a)\log r\bigr)\le \tilde c\,r^{\,d(a/\wt\gamma)^n(1-a)}.
\end{equation}
It follows that, for~$r\le1/2$, the last sum in \eqref{E:4.21} is dominated by its~$n=0$ term, thus giving us \eqref{E:4.19}  with $\zeta:=d(1-a)$. 

For \eqref{E:4.20a} we dominate  $|W|^m$  by a.s.\ limit of $\prod_{k=0}^n(|Z_k|\vee1)^{m\rho_k}$,  where $\rho_0:=1$ and $\rho_n:=\prod_{k=1}^n\gamma_k$ for~$n\ge1$, whose expectation is bounded using the independence of $\{Z_n\}_{n\ge0}$ and the H\"older inequality (enabled by $\rho_n\in[0,1)$) by  $E(|Z|^m\vee1)$  to power $\sum_{k=0}^n\rho_k$. As  $E(|Z|^m\vee1)<\infty$  and $\sum_{k\ge0}\rho_k<\infty$, the  above finite products converge (as $n\to\infty$)    in the mean by the Monotone Convergence Theorem thus showing \eqref{E:4.20a}.
\end{proofsect}

\subsection{Proof of Proposition~\ref{prop-4.3}}
The proof of Proposition~\ref{prop-4.3} proceeds by induction with the induction step supplied by Lemma~\ref{lemma-4.4a}. There are two base cases, $n=0$ and~$n=1$, of which the latter requires a separate argument and so we address that first:

\begin{lemma}
\label{lemma-4.6}
We have
\begin{equation}
\label{E:4.25a}
\limsup_{\epsilon\downarrow0}\,\limsup_{\beta\to\infty}\,\sup_{W\in\WW}E_\beta\otimes E_W\bigl( \wt D(0,[\epsilon\beta^{1/s} W])\bigr)\le 1.
\end{equation}
\end{lemma}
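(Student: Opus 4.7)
\begin{proofsect}{Proof sketch}
The plan is to split $E_\beta\otimes E_W[\wt D(0,[\epsilon\beta^{1/s}W])]$ across the threshold $|W|=c/\epsilon$, where $c=c(\eta,\overline\gamma)$ is the constant from Lemma~\ref{lemma-4.4a}, and to treat each regime with a different ingredient: the subadditivity of Lemma~\ref{lemma-4.4a} on the large-$|W|$ event, and a 2-edge-path estimate on the small-$|W|$ event.

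For $|W|>c/\epsilon$, I would apply Lemma~\ref{lemma-4.4a} with $n=0$ and $\gamma_1=\gamma_2=\gamma$. The $\beta$-prefactor telescopes via $\beta^{-\gamma/s}|x|^\gamma=\epsilon^\gamma|W|^\gamma$, giving
\begin{equation*}
\wt D(0,[x])\stackrel{\text{\rm law}}\le \wt D(0,[\epsilon^\gamma|W|^\gamma Z])+\wt D'(0,[\epsilon^\gamma|W|^\gamma Z'])+1+2|x|_1\,1_{A_{0,\beta}(x)}.
\end{equation*}
The essential observation is that if $Z$ is independent of $W=Z_0\prod_{n\ge 1}|Z_n|^{\prod_{k=1}^n\gamma_k}\in\WW$, then $\tilde W:=|W|^\gamma Z$ again lies in $\WW$: set $Z_0':=Z$, $Z_n':=Z_{n-1}$, $\gamma_1':=\gamma$, and $\gamma_n':=\gamma_{n-1}$ for $n\ge 2$; the required inequality $\gamma\le 2\gamma/(1+\gamma)$ holds because $\gamma<1$. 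Combining the trivial bound $\wt D(0,[y])\le|[y]|_1$ with Cauchy-Schwarz and $\sup_{\tilde W\in\WW}E_{\tilde W}|\tilde W|^2<\infty$ from Lemma~\ref{lemma-4.5a} then yields $E_\beta\otimes E_{\tilde W}[\wt D(0,[\epsilon^\gamma\tilde W])]=O(\epsilon^{2\gamma})$. The error term is controlled using \eqref{E:4.6}: the super-Gaussian tail of $Z$ gives $P(A_{0,\beta}(x))\le C\exp(-c\beta^{(\overline\gamma-\gamma)/\gamma})$ on $\{|W|>c/\epsilon\}$, which overwhelms the factor $|x|_1\le C\epsilon\beta^{1/s}|W|$ as $\beta\to\infty$.

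For $|W|\le c/\epsilon$, I would use the decomposition $\wt D(0,[x])\le 1_A+\wt D(0,[x])\,1_{A^c}$, where $A$ denotes presence of the direct edge $(0,[x])$. By edge independence, $E_\beta[\wt D\mid A^c,W]$ equals the expectation of $\wt D^-(0,[x])$ in the graph with edge $(0,[x])$ deleted. Because $|[x]|\le c\beta^{1/s}$ in this regime, the intersection $B(0,c'\beta^{1/s})\cap B([x],c'\beta^{1/s})$ contains $\gtrsim\beta^{d/s}$ potential bridging vertices $z$, each giving a 2-edge path with probability bounded below by a constant. Since the edge pairs $\{(0,z),(z,[x])\}$ are disjoint for distinct $z$, the corresponding events are independent, whence $P_\beta(\text{no }2\text{-path})\le\exp(-c\beta^{d/s})$. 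Hence $E_\beta\wt D^-\le 2+C\beta^{1/s}e^{-c\beta^{d/s}}=2+o_\beta(1)$ (valid for all $d\ge 1$ since $d/s\ge 1/s$), and therefore $E_\beta[\wt D\mid W]\le 1+P_\beta(A^c)(1+o_\beta(1))$. The remaining quantity $E_W[e^{-\beta\frakq([x])}\,1_{|W|\le c/\epsilon}]$ is shown to be $o_\epsilon(1)$ by splitting at $|W|=K(\epsilon):=\epsilon^{-1/2}$: on $\{|W|\le K\}$ the exponent exceeds $c(K\epsilon)^{-s}=c\epsilon^{-s/2}\to\infty$, and on $\{K<|W|\le c/\epsilon\}$ the Chebyshev bound $P_W(|W|>K)\le E_W|W|^2/K^2\le C\epsilon$ from Lemma~\ref{lemma-4.5a} applies.

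Combining the two regimes yields $E_\beta\otimes E_W[\wt D(0,[x])]\le 1+O(\epsilon^{2\gamma})+o_\epsilon(1)+o_\beta(1)$ uniformly in $W\in\WW$, so that $\limsup_{\beta\to\infty}$ followed by $\limsup_{\epsilon\downarrow 0}$ delivers the claim. The main obstacle is the small-$|W|$ regime: Lemma~\ref{lemma-4.4a} is inapplicable there (its coupling hypothesis requires $|x|\gtrsim\beta^{1/s}$) and the direct-edge probability does not tend to $1$ when $|W|\sim 1/\epsilon$; the 2-edge-path estimate rescues the bound by converting the abundance $\sim\beta^{d/s}$ of bridging vertices at scale $\beta^{1/s}$ into overwhelming probability that $\wt D^-\le 2$.
\end{proofsect}
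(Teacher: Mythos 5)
Your sketch is correct in its main structure and takes a genuinely different route from the paper's. The paper never invokes Lemma~\ref{lemma-4.4a} here: it splits at $|W|\le(\beta^{1/s})^{-p}$ with $p=1/(1+\zeta)$, kills the small-$W$ piece with the trivial bound $\wt D(0,[x])\le|[x]|_1$ together with the left tail \eqref{E:4.19} of $W$, and for larger $|W|$ runs a direct two-hop estimate: with $x_\beta:=[\beta^{1/s}\epsilon W]$, either the edge $(0,x_\beta)$ is open, or one looks for a bridging vertex in an annulus $\Lambda$ at scale $\beta^{1/s}|\epsilon W|$, bounding the no-bridge probability by Chernoff. You instead split at $|W|\sim c(\eta,\overline\gamma)/\epsilon$ --- exactly the threshold at which the coupling hypothesis $|x|\gtrsim\beta^{1/s}$ of Lemma~\ref{lemma-4.4a} activates --- and above it apply the $n=0$, $\gamma_1=\gamma_2=\gamma$ instance of the subadditivity bound. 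Your key observations there are correct: the $\beta$-prefactor cancels via $\beta^{\theta_0-\gamma\theta_1}|x|^\gamma=\epsilon^\gamma|W|^\gamma$, the reweighted $\tilde W:=|W|^\gamma Z$ stays in $\WW$ because $\gamma\le 2\gamma/(1+\gamma)$, Chebyshev plus \eqref{E:4.20a} makes the two sub-distances $O(\epsilon^{2\gamma})$, and the coupling-failure term is killed by the super-Gaussian tail of $Z$. Below the threshold you run the bridge estimate where $|x|=\epsilon|W|$ is bounded by a constant. This is arguably the tighter decomposition: the Chernoff step in the paper uses $\frac1{|\Lambda|}\sum_{z\in\Lambda}P_\beta\bigl((0,z)\not\in\scrE\bigr)\le e^{-c'|x|^{-s}}<(2e)^{-1}$, which requires $|x|$ to be bounded above by a $\beta$-independent constant; your split enforces that automatically, whereas the paper's split (which allows $|W|$, hence $|x|=\epsilon|W|$, to be arbitrarily large) does not obviously do so. Conceptually, the paper's argument is entirely hands-on, while yours slots into the subadditivity machinery used throughout the rest of the section.

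Two details to nail down. First, the bridging vertices must also respect the ball constraint built into the definition \eqref{E:dist-1} of $\wt D$, namely $|z|<2|[x]|_1$; so the bridging set should be placed at a scale that is a small multiple of $|[x]|_1\sim\epsilon\beta^{1/s}|W|$ rather than $\beta^{1/s}$. The vertex count becomes $\gtrsim(\epsilon\beta^{1/s}|W|)^d$, which still diverges as $\beta\to\infty$ for each fixed $\epsilon$ and $W\ne 0$, and the per-edge probability remains bounded below precisely because $\epsilon|W|$ is bounded above in this regime, so the exponential bound survives. Second, when $|W|$ is so small that $[\epsilon\beta^{1/s}W]$ has bounded norm, the asymptotic \eqref{E:1.1} for $\frakq$ is unavailable and the relation $\beta\frakq([x])\approx\epsilon^{-s}|W|^{-s}$ need not hold; that regime should be peeled off with \eqref{E:4.19} and the crude $\wt D\le|[x]|_1$, exactly as the paper does in its small-$W$ step. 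Both are routine repairs.
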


\begin{proofsect}{Proof}
Given  $p\in(0,1)$,  we split the expectation according to whether~$|W|\le(\beta^{1/s})^{-p}$ or otherwise.
In the former case, we use the inequalities $\wt D(0,[x])\le 2|x|_1$, $|x|_1\le c'|x|$ and \eqref{E:4.19} to get
\begin{multline}
\label{E:4.26}
\quad 
E_\beta\otimes E_W\Bigl( \wt D(0,[\epsilon\beta^{1/s} W])1_{\{|W|\le(\beta^{1/s})^{-p}\}}\Bigr)\le 2\epsilon \beta^{1/s} E\bigl(|W|_11_{\{|W|\le(\beta^{1/s})^{-p}\}}\bigr)
\\
\le 2c'\epsilon (\beta^{1/s})^{1-p}P\bigl(|W|\le(\beta^{1/s})^{-p}\bigr)
\le 2c'c\,\epsilon(\beta^{1/s})^{1-(1+\zeta)p}.
\quad
\end{multline}
Setting $p := \tfrac1{1+\zeta} \in (0,1)$, the right hand side is at most~$2 c \ep$
and vanishes as $\ep \downarrow 0$.

When $|W|>(\beta^{1/s})^{-p}$, using $p < 1$ we get $|\ep \be^{1/s} W| \to \infty$
as~$\be \to \infty$, and in particular $[\ep \be^{1/s} W] \ne 0$.
Hence $\wt D(0, [\ep \be^{1/s} W]) \ge 1$,
and so it suffices to show that
\begin{equation}
\label{E:4.26a}
E_\be \otimes E_W \Bigl( \wt D( 0, [\ep \be^{1/s} W])
1_{\{|W| > (\be^{1/s})^{-p}\}} 1_{\{\wt D(0,[\ep \be^{1/s} W]) > 1\}} \Bigr)
\end{equation}
vanishes in the stated limits.
For this we temporarily replace $W$  by a non-zero deterministic $x\in\R^d$, abbreviate $x_\beta:=[\epsilon\beta^{1/s}x]$ and set 
\begin{equation}
\Lambda:=\Bigl\{z\in\Z^d\colon  \frac14|x_\beta|\le |z|,|z-x_\beta| \le4|x_\beta|\Bigr\}.
\end{equation}
Now observe that, if~$\wt D(0,x_\beta)>1$, then either $\wt D(0,x_\beta)=2$ but the edge $(0,x_\beta)$ is not occupied or no vertex in~$\Lambda$ is connected by an occupied edge to both~$0$ and~$x_\beta$. Hence,
\begin{multline}
\label{E:4.28}
E_\beta\Bigl( \wt D(0,x_\beta)1_{\{\wt D(0,x_\beta)>1\}}\Bigr)
\\
\le 2 P_\beta\bigl((0,x_\beta)\not\in\scrE\bigr)
+2\epsilon\beta^{1/s}|x|_1 P_\beta\Bigl(\forall z\in\Lambda\colon\, (0,z)\not\in\scrE\vee(x_\beta,z)\not\in\scrE\Bigr)
\end{multline}
by the union bound and the deterministic inequality~$\wt D(0,x_\beta)\le |x_\beta|_1\le 2\epsilon\beta^{1/s}|x|_1$.

The first probability on the right of \eqref{E:4.28} is bounded directly as
\begin{equation}
\label{E:4.29}
P_\beta\bigl((0,x_\beta)\not\in\scrE\bigr)=1-\frakp_\beta(0,x_\beta)
=\exp\{-\beta\frakq(x_\beta)\}\le\texte^{-c' \epsilon^{-s} |x|^{-s}}
\end{equation}
for some~$c'\in(0,\infty)$ provided~$|x_\beta|$ exceeds a constant that depends only~$\frakq$.   
In light of the independence of the underlying edge process, the second probability in \eqref{E:4.28} is bounded by
\begin{equation}
\prod_{z\in\Lambda}\bigl(1-\frakp_\beta(0,z)\frakp_\beta(z,x_\beta)\bigr)
\le\exp\Bigl\{-\sum_{z\in\Lambda}\frakp_\beta(0,z)\frakp_\beta(z,x_\beta)\Bigr\}.
\end{equation}
A similar estimate as in \eqref{E:4.29} then gives
\begin{multline}
\label{E:4.32}
\quad
2\epsilon\beta^{1/s}|x|_1
P_\beta\Bigl(\forall z\in\Lambda\colon\, (0,z)\not\in\scrE\vee(x_\beta,z)\not\in\scrE\Bigr)
\\
\le 2\epsilon\beta^{1/s}|x|_1 
\texte^{-c''(\epsilon\beta^{1/s})^d|x|^d (1-\texte^{-c'\epsilon^{-s}|x|^{-s}})^2},
\quad
\end{multline}
where~$c''$ is the constant such that $|\Lambda|\ge c''(\epsilon\beta^{1/s})^d|x|^d$.  

Now substitute~$x:= W$ and note that, since $p<1$, the quantity $x_\beta$ diverges as~$\beta\to\infty$ uniformly on $\{|W|\ge(\beta^{1/s})^{-p}\}$ and the bounds \eqref{E:4.29} and \eqref{E:4.32} are thus in force. Indeed \eqref{E:4.29} equals $\texte^{-c'\epsilon^{-s}|W|^{-s}}$  whose expectation vanishes as~$\epsilon\downarrow0$ in light of the uniform bound on the tails of~$|W|$ supplied by \eqref{E:4.20a} and Chebyshev's inequality.

Concerning the second term on the right of \eqref{E:4.28}, on the event $\{|W|\le\epsilon^{-1}\}$, the right hand side of \eqref{E:4.32} (with $x:=W$) is bounded by a constant times
\begin{equation}
\label{E:4.32a}
\epsilon\beta^{1/s}|W|
\texte^{-\tilde c(\epsilon\beta^{1/s})^d|W|^d},
\end{equation}
where $\tilde c:=c''(1-\texte^{-c'})^2$ and where we used that all norms on~$\R^d$ are comparable. Writing this as $f(\epsilon\beta^{1/s}|W|)$ for~$f(z):=z\texte^{-\tilde c z^d}$, the fact that~$f$ is bounded and vanishes at zero and infinity shows that \eqref{E:4.32a} is bounded and vanishes as~$\beta\to\infty$. The expectation of this term tends to zero as~$\beta\to\infty$ by the Bounded Convergence Theorem.

On the event $\{|W|>\epsilon^{-1}\}$, the right hand side of \eqref{E:4.32} (with $x:=W$) is bounded by a constant times
\begin{equation}
\label{E:4.33}
\epsilon\beta^{1/s}|W|\texte^{-\tilde c'\epsilon^{d-2s}\beta^{d/s}|W|^{d-2s}},
\end{equation}
where $\tilde c':=c''\inf_{z>1}|z|^{2s}(1-\texte^{-c'|z|^{-s}})^2$. Let~$k$ be a natural such that $kd>2s-d$ and note that $r:=d-(2s-d)/k>0$. If $|W|\le(\beta^{1/s})^{1/k}$, then the quantity in \eqref{E:4.33} is at most
\begin{equation}
\epsilon(\beta^{1/s})^{1+1/k}\texte^{-\tilde c'\epsilon^{d-2s}(\beta^{1/s})^r}
\end{equation}
which tends to zero as~$\beta\to\infty$. If in turn $|W|> (\beta^{1/s})^{1/k}$, then the expectation of \eqref{E:4.33} is at most $\epsilon\beta^{1/s}E_W(|W|1_{\{|W|>(\beta^{1/s})^{1/k}})$ which, invoking Chebyshev's inequality, is then at most $\epsilon E_W(|W|^{k+1})$. Thanks to \eqref{E:4.20a}, this tends to zero as~$\epsilon\downarrow0$.

\end{proofsect}

With the preliminaries out of the way, we are ready to give:

\begin{proofsect}{Proof of Proposition~\ref{prop-4.3}}
Since~$\wt D$ dominates~$D$, it suffices to prove the claim for~$\wt D$. For each~$n\ge0$ we first construct a function $\chi_n\colon[0,\infty)\times[1,\infty)\to[0,\infty)$ such that
\begin{equation}
\label{E:4.13}
\sup_{W\in\WW}\,
E_\beta\otimes E_W\bigl( \wt D(0,[\texte^{-\alpha\vartheta_n}\beta^{\theta_n} W])\bigr)\le n+\chi_n(\alpha,\beta)
\end{equation}
and such that~$\chi_{n+1}$ is tied recursively to~$\chi_{\lfloor n/2\rfloor}$ and~$\chi_{\lceil n/2\rceil}$ allowing for inductive control of their~$\beta,\alpha\to\infty$ limits.  These functions depend on parameter $\overline\gamma\in(\frac{2\gamma}{1+\gamma},1)$ that will be fixed throughout this proof. 

For $n=0$ we note that $[x]=0$ unless $|x|_1\ge\frac12$. Using $\wt D(0,x)\le|x|_1$, $|[x]|_1\le 2|x|_1$ and the Chebyshev inequality we then get
\begin{equation}
\label{E:4.24}
E_\beta\otimes E_W\bigl( \wt D(0,[\texte^{-\alpha}W])\bigr)\le 2E_W\Bigl(|W|_11_{\{|W|_1\ge \frac12\texte^{ \alpha}\}}\Bigr)
\le 4 \texte^{-\alpha} E_W\bigl(|W|_1^2\bigr).
\end{equation}
Denoting by $\chi_0(\alpha,\beta)$  the  supremum of the right-hand side over all~$W\in\WW$, which is finite by Lemma~\ref{lemma-4.5a} and the fact that all norms on~$\R^d$ are comparable, we have \eqref{E:4.13} for~$n=0$.
For $n=1$ we simply~put 
\begin{equation}
\chi_1(\alpha,\beta):= \max\Bigl\{0,\sup_{W\in\WW}E_\beta\otimes E_W\bigl( \wt D(0,[\texte^{-\alpha\vartheta_1}\beta^{\theta_1} W])\bigr)-1\Bigr\}.
\end{equation}
which then obeys \eqref{E:4.13} trivially.

Next assuming that, for some~$n\ge1$, the functions~$\chi_0,\dots,\chi_n$ have been constructed, we show how to construct~$\chi_{n+1}$. Pick~$W\in\WW$ and set
\begin{equation}
\label{E:4.25}
\gamma_1:=\frac{\vartheta_{\lfloor n/2\rfloor}}{\vartheta_{n+1}}\,\,\text{ and }\,\,
\gamma_2:=\frac{\vartheta_{\lceil n/2\rceil}}{\vartheta_{n+1}}.
\end{equation}
Then~$\gamma_1,\gamma_2\in(0,\overline\gamma)$ by \eqref{E:4.3} and so Lemma~\ref{lemma-4.4a} along with \eqref{E:4.2} give
\begin{multline}
\label{E:4.15}
\qquad
\wt D\bigl( 0, [\texte^{-\alpha\vartheta_{n+1}}\beta^{\theta_{n+1}}W]\bigr)
    \,\overset{\text{\rm law}}\le\, \wt D \bigl( 0, [\texte^{-\alpha\vartheta_{\lfloor n/2\rfloor}}\beta^{\theta_{\lfloor n/2\rfloor}}|W|^{\gamma_1}Z ] \bigr)
    \\
    + \wt D' \bigl( 0, [\texte^{-\alpha\vartheta_{\lceil n/2\rceil}}\beta^{\theta_{\lceil n/2\rceil}} |W|^{\gamma_2}Z' ] \bigr)
    + 1 + 2|X_{\alpha,\beta}|_1 1_{A_{n,\beta}(X_{\alpha,\beta})},
\qquad
\end{multline}
where $X_{\alpha,\beta}:=\texte^{-\alpha\vartheta_{n+1}}\beta^{\theta_{n+1}}W$ and where $\wt D$, $\wt D'$, $Z$, $Z'$ and~$W$  on the right-hand side are independent with their respective laws. 

We will use \eqref{E:4.15} only when~$|W|$ is sufficiently large so that the bound \eqref{E:4.6} becomes available and useful. Defining ``small'' by~$|W|\le r$ for some~$r\in(0,1/2)$ to be determined, the part of the expectation for~$|W|$ small is then handled by
\begin{equation}
\label{E:4.17}
\begin{aligned}
E_\beta\otimes E_W\Bigl(\wt D\bigl(0,[&\texte^{-\alpha\vartheta_{n+1}}\beta^{\theta_{n+1}}W\rfloor\bigr)
1_{\{|W|\le r\}}\Bigr)
\\
&\le c\bigl[\bigl(\alpha\vartheta_{n+1} +(\log\beta)\theta_{n+1}\bigr)^\Delta+1\bigr]P\bigl(|W|\le r\bigr)
\\
&\le c' \bigl[\bigl(\alpha\vartheta_{n+1} +(\log\beta)\theta_{n+1}\bigr)^\Delta+1\bigr]\,r^{\zeta}.
\end{aligned}
\end{equation}
Here we first took expectation with respect to~$\wt D$ (conditional on~$W$) using Corollary~\ref{cor-3.3}, then used $|[X_{\alpha,\beta}]|\le 2|X_{\alpha,\beta}|$ and~$r\le1/2$ inside the logarithm and, finally, invoked Lemma~\ref{lemma-4.5a}. The bound holds uniformly in~$W\in\WW$.

The part of the expectation for~$|W|$ large is handled via \eqref{E:4.15} with the result
\begin{equation}
\label{E:4.18}
\begin{aligned}
E_\beta\otimes E_W\Bigl(\,&\wt D\bigl( 0, [\texte^{-\alpha\vartheta_{n+1}}\beta^{\theta_{n+1}}W]\bigr)1_{\{|W|> r\}}\Bigr)
\\
&\le E_\beta\otimes E_W\otimes E_Z \Bigl(\wt D \bigl( 0, [\texte^{-\alpha\vartheta_{\lfloor n/2\rfloor}}\beta^{\theta_{\lfloor n/2\rfloor}}|W|^{\gamma_1}Z ] \bigr)\Bigr)\qquad\quad
\\
&\quad\qquad+E_\beta\otimes E_W\otimes E_Z \Bigl(\wt D \bigl( 0, [\texte^{-\alpha\vartheta_{\lceil n/2\rceil}}\beta^{\theta_{\lceil n/2\rceil}} |W|^{\gamma_2}Z ] \bigr)\Bigr)
\\
&\qquad\qquad\qquad+1+2E_W\Bigl(|X_{\alpha,\beta}|_1\, 1_{\{|W|>r\}}P\bigl(A_{n,\beta}(X_{\alpha,\beta})\bigr)\Bigr),
\end{aligned}
\end{equation}
where we also used that~$Z'$ is equidistributed to~$Z$ and~$\wt D'$ is equidistributed to~$\wt D$. In order to control the last expectation, we assume $\texte^{-\alpha\vartheta_{n+1}}\beta^{\theta_{n+1}}r\ge c(\eta,\overline\gamma)(1+\beta^{1/s})$  and apply Corollary~\ref{cor-4.4}  to get, for any $x\in\R^d$ with~$|x|\ge r$,
\begin{equation}
\begin{aligned}
P\bigl(A_{n,\beta}(\texte^{-\alpha\vartheta_{n+1}}\beta^{\theta_{n+1}}x)\bigr)
& \le 2 \max_{i=1,2}P_Z\Bigl(|Z|>\beta^{(\overline\gamma-\gamma)\theta_{n+1}}\texte^{-\alpha(\overline\gamma-\gamma_i)\vartheta_{n+1}}|x|^{\overline\gamma-\gamma_i}\Bigr)
\\
& \le 2 P_Z\Bigl(|Z|>\beta^{(\overline\gamma-\gamma)\theta_{n+1}}\texte^{-\alpha\overline\gamma\vartheta_{n+1}}r^{\overline\gamma}\bigl(|x|/r\bigr)^{\overline\gamma-\wt \gamma}\Bigr),
\end{aligned}
\end{equation}
 where we abbreviated $\wt\gamma:=\frac{2\gamma}{1+\gamma}$ and used~$r\le 1$ and $|x|\ge r$ to bound
\begin{equation}
|x|^{\overline\gamma-\gamma_i}=r^{\overline\gamma-\gamma_i}\bigl(|x|/r\bigr)^{\overline\gamma-\gamma_i}\ge r^{\overline\gamma}\bigl(|x|/r\bigr)^{\overline\gamma-\wt \gamma}
\end{equation}
with the help of $\gamma_1,\gamma_2\le\wt\gamma$ as shown in \eqref{E:4.3}.
Letting~$c_1,c_2\in(0,\infty)$ be numbers  such that $P(|Z|>a)\le c_1\texte^{-c_2 a^{2d}}$ for all~$a\ge0$ and writing~$\tilde c\in(0,\infty)$ for the best constant such that $|\cdot|_1\le\tilde c|\cdot|$, the last term on the right of \eqref{E:4.18} is thus bounded by
\begin{equation}
\label{E:4.20}
4 c_1\tilde c\,\texte^{-\alpha\vartheta_{n+1}}\beta^{\theta_{n+1}} 
\sum_{k\ge1} r(k+1)\texte^{-c_2 [\beta^{(\overline\gamma-\gamma)\theta_{n+1}}\texte^{-\alpha\overline\gamma\vartheta_{n+1}}r^{\overline\gamma}]^{2d}k^{2d(\overline\gamma-\wt\gamma)}},
\end{equation}
where the sum arises by partitioning the support of~$W$ according to $|W|/r\in[k,k+1)$ and estimating $P_W(|W|/r\in[k,k+1))$ by one. 
Noting that,  for any $p,q>0$, 
\begin{equation}
\sum_{k\ge1}(k+1)\texte^{-p k^q}\le \int_1^\infty 2z\texte^{-pz^q}\textd z\le \Bigl(2\int_0^\infty z\texte^{-z^q}\textd z\Bigr)p^{-2/q}
\end{equation}
 and denoting the prefactor in large parentheses by~$\hat c(q)$,  the quantity in \eqref{E:4.20} is at most 
\begin{equation}
\label{E:4.22}
4 c_1\tilde c\, \hat c\bigl(2d(\overline\gamma-\gamma)\bigr)c_2^{\frac1{2d(\overline\gamma-\wt \gamma)}}\,\texte^{-\alpha\vartheta_{n+1}}\beta^{\theta_{n+1}} \,r\,\bigl(\beta^{(\overline\gamma-\gamma)\theta_{n+1}}\texte^{-\alpha\overline\gamma\vartheta_{n+1}}r^{\overline\gamma}\bigl)^{-\frac2{\overline\gamma-\wt\gamma}}.
\end{equation}
Again, this holds uniformly in~$W\in\WW$.

We now finally set
\begin{equation}
r:=\bigl(\alpha\vartheta_{n+1} +(\log\beta)\theta_{n+1}\bigr)^{-2\Delta/\zeta}
\end{equation}
and note that, since $\theta_{n+1}>1/s$ for~$n\ge1$,
\begin{equation}
\label{E:4.34}
r\le \frac12\,\,\wedge\,\,\texte^{-\alpha\vartheta_{n+1}}\beta^{\theta_{n+1}}r\ge c(\eta,\overline\gamma)(1+\beta^{1/s})
\end{equation}
hold for all~$n\ge1$ provided that~$\beta$ is sufficiently large and $\alpha^{-1}\log\beta$ exceeds the quantity in \eqref{E:4.5a} by a positive constant. 
Thanks to $\gamma_1,\gamma_2\le\frac{2\gamma}{1+\gamma}$ as seen via \eqref{E:4.3}, we also have~$|W|^{\gamma_i}Z\in\WW$ for both~$i=1,2$ in  \eqref{E:4.18}. The first two expectations on the right of  \eqref{E:4.18} can thus be bounded using \eqref{E:4.13}. In light of $\lfloor n/2\rfloor+\lceil n/2\rceil+1=n+1$, this yields \eqref{E:4.13} for~$n$ replaced by~$n+1$ and
\begin{equation}
\label{E:4.35}
\chi_{n+1}(\alpha,\beta):=\chi_{\lfloor n/2\rfloor}(\alpha,\beta)+\chi_{\lceil n/2\rceil}(\alpha,\beta)+a_n(\alpha,\beta)+b_n(\alpha,\beta),
\end{equation}
where~$a_n(\alpha,\beta)$ is the term on the right-hand side of \eqref{E:4.17} and $b_n(\alpha,\beta)$ is the quantity in \eqref{E:4.22} for~$r$ as above.
Proceeding recursively, this gives \eqref{E:4.13} for all~$n\ge0$.

With \eqref{E:4.13} in hand, it remains to prove \twoeqref{E:4.5i}{E:4.4}.  First observe that, under the assumption~$\alpha,\beta\ge\kappa_0$ and $\alpha^{-1}\log\beta\ge\kappa_0$ with~$\kappa_0$ sufficiently large, $\sup_{n\ge1} a_n(\alpha,\beta)$ tends to zero as~$\beta\to\infty$. Concerning the corresponding statement for~$\sup_{n\ge1}b_n(\alpha,\beta)$, here we observe that, modulo the constant prefactors, \eqref{E:4.22} is reduced to
\begin{equation}
r^{-\frac{\overline\gamma+\wt\gamma}{\overline\gamma-\wt\gamma}}
\,\texte^{\alpha\frac{\overline\gamma+\wt\gamma}{\overline\gamma-\wt\gamma}\vartheta_{n+1}}\,
\beta^{-\theta_{n+1}}.
\end{equation}
Assuming again the above restrictions between~$\alpha$ and~$\beta$, this tends to zero as~$\beta\to\infty$ uniformly in~$n\ge1$. Under the same conditions we thus get that
\begin{equation}
\label{E:4.37}
\wt\chi(\alpha,\beta):=\sup_{n\ge1}\bigl[a_n(\alpha,\beta)+b_n(\alpha,\beta)\bigr]
\end{equation}
obeys 
\begin{equation}
\label{E:4.5k}
\lim_{\alpha\to\infty}\,\limsup_{\beta\to\infty}\wt\chi(\alpha,\beta)=0.
\end{equation}
As $\lfloor n/2\rfloor+\lceil n/2\rceil=n$, induction shows that, for all $n\ge0$,
\begin{equation}
\chi_n(\alpha,\beta)\le (n+1)\max\bigl\{\chi_0(\alpha,\beta),\chi_1(\alpha,\beta)\bigr\}+n\wt\chi(\alpha,\beta).
\end{equation}
Then \eqref{E:4.4} holds with $\chi(\alpha,\beta) := 2\max_{i=0,1}\chi_i(\alpha,\beta)+\wt\chi(\alpha,\beta)$. 
The convergence \eqref{E:4.5i} follows from \eqref{E:4.24}, \eqref{E:4.25a} and \eqref{E:4.5k}. 
\end{proofsect}

\begin{remark}
Note that, since $\theta_1 = 1/s$, the  second condition in  \eqref{E:4.34} is exactly the reason why the case $n=1$ in \eqref{E:4.4} has to be treated separately using Lemma~\ref{lemma-4.6}.
\end{remark}

\subsection{Proof of upper bound in Theorem~\ref{thm-2}}
With Proposition~\ref{prop-4.3} established, we finally give:

\begin{proofsect}{Proof of $\le$ in Theorem~\ref{thm-2}}
Let~$\{\gamma_n\}_{n\ge1}$ be such that $\gamma_n=\gamma$ for all~$n\ge0$ and let~$\eta'\in(0,1)$. For~$\beta\ge1$, the random variable~$W_{\{\gamma_n\}_{n\ge1}}$ for $\eta:=\eta'$ coincides with the random variable in \eqref{E:w-def} provided~$\eta$ in \eqref{E:z-law} is set to~$\eta'/\beta$. By the argument in the proof of Lemma~\ref{P:dist-1-plim}, we then have
\begin{equation}
\label{E:4.40}
2^{-n}E_\beta\otimes E_W(\wt D(0,r^{\gamma^{-n}}W))\,\underset{n\to\infty}\longrightarrow\, L_\beta(r)
\end{equation}
uniformly on compact subsets of~$r\in(1,\infty)$.

Given~$\lambda\in[0,1]$, let $\{\lambda_n\}_{n\ge1}\subseteq[0,1]$ be such that
\begin{equation}
q(n):=\lambda_n (2^n-1)+(1-\lambda_n)(2^{n+1}-1)
\end{equation}
is an integer for each~$n\ge1$ and~$\lambda_n\to\lambda$ as~$n\to\infty$.
Then \twoeqref{E:2.10}{E:2.11} gives 
\begin{equation}
\begin{aligned}
\theta_{q(n)}
&=\lambda_n\theta_{2^n-1}+(1-\lambda_n)\theta_{2^{n+1}-1}
\\
&=\frac1{2d-s}[\lambda+(1-\lambda)\gamma^{-1}+o(1)]\gamma^{-n},
\end{aligned}
\end{equation}
where~$o(1)\to0$ as~$n\to\infty$. 

Let~$\{n_k\}_{k\ge1}$ be a sequence of naturals tending to infinity such that~$v:=\lim_{k\to\infty}\vartheta_{q(n_k)}/\theta_{q(n_k)}$ exists in~$\R$; such a sequence exists thanks to \eqref{E:4.5a}. In light of the aforementioned uniformity in \eqref{E:4.40}, this implies 
\begin{equation}
\lim_{k\to\infty}2^{-n_k}E_\beta\otimes E_W\bigl(\wt D(0,\texte^{-\alpha\vartheta_{q(n_k)}}\beta^{\theta_{q(n_k)}}W)\bigr)=L_\beta\Bigl((\texte^{- \alpha v}\beta)^{[\lambda+(1-\lambda)\gamma^{-1}]\frac1{2d-s}}\Bigr).
\end{equation}
 Noting that  $q(n)=[\lambda+(1-\lambda)2+o(1)]2^n$,  \eqref{E:4.4} in Proposition~\ref{prop-4.3}  then shows
\begin{equation}
\label{E:4.54}
L_\beta\Bigl((\texte^{- \alpha v}\beta)^{[\lambda+(1-\lambda)\gamma^{-1}]\frac1{2d-s}}\Bigr)
\le\bigl[\lambda+2(1-\lambda)\bigr]\bigl[1+\chi(\alpha,\beta)\bigr]
\end{equation}
 whenever  $\alpha$ and~$\beta$ obey the conditions  mentioned there.  
Writing \eqref{E:4.54} using~$\phi_\beta$ and introducing $m(\beta)$ and~$u(\beta)$ as in \eqref{E:1.8a}, the log-log-periodicity \eqref{E:1.3a} gives
\begin{multline}
\label{E:4.55}
\quad
\phi_\beta\Bigl((\texte^{- \alpha v\gamma^{+ m(\beta)}}\texte^{u(\beta)})^{[\lambda+(1-\lambda)\gamma^{-1}]\frac1{2d-s}}\Bigr)
\\
\le\frac{\lambda+2(1-\lambda)}{(\lambda+(1-\lambda)\gamma^{-1})^\Delta}
\frac{(2d-s)^\Delta}{(\log\beta-\alpha v)^\Delta}
\bigl[1+\chi(\alpha,\beta)\bigr].
\quad
\end{multline}
 Varying~$\alpha$ in \eqref{E:4.55}  over scales proportional to~$\log\beta$ (still within restrictions imposed in Proposition~\ref{prop-4.3}) along with log-log periodicity shows   $\sup_{\beta>1}(\log\beta)^\Delta\phi_\beta(r)<\infty$ for all~$r>1$.  The convexity of~$t\mapsto L_\beta(\texte^t)$  then gives 
\begin{equation}
\label{E:4.56}
\bigl\{(\log\beta)^\Delta\phi_\beta\colon\beta\ge1\bigr\}\text{ is uniformly equicontinuous on } [\texte^\gamma,\texte^{\gamma^{-1}}].
\end{equation}
This implies that  the term  $\texte^{- \alpha v\gamma^{+ m(\beta)}}$  (which tends to~$1$ for any fixed~$\alpha$) in \eqref{E:4.55}  is negligible in the limit~$\beta\to\infty$. Invoking \eqref{E:4.5i}, the argument following \eqref{E:2.17} then gives ``$\le$'' in~\eqref{E:1.5}. By \eqref{E:4.56} again, the limit is uniform in~$t$. 
\end{proofsect}

\section{Proof of Theorem~\ref{thm-2}: Lower bound} 
\noindent
It remains to prove the lower bound in \eqref{E:1.5}.
 Here we rely  on an argument from the proof of a corresponding lower bound in~\cite{B2} (which itself goes back to an argument in Trapman~\cite{Trapman}). The main improvement compared to~\cite{B2} is that our computations track the~$\beta$-dependence explicitly and that so via the exponent family~$\{\theta_n\}_{n\ge1}$.

\begin{proposition}
\label{prop-3.1}
Noting that $s<2d$, let~$p\in(0,\infty)$ be such that 
\begin{equation}
\label{E:3.0}
\frac{2d}s p\ge p+s+1.
\end{equation}
Then there are $c,\tilde c, \beta_0\in(0,\infty)$ such that for all~$\beta\ge\beta_0$, all natural~$n\ge1$ and all~$x\in\Z^d\smallsetminus\{0\}$,
\begin{equation}
\label{E:3.1}
P\bigl(D(0,x)\le n\bigr)\le c\Bigl(\beta^{\theta_n}\frac{\texte^{\tilde c n^{1/\Delta}}}{|x|}\Bigr)^s\frac1{n^p}.
\end{equation}
\end{proposition}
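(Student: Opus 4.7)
The plan is to prove the bound by induction on~$n$, following the template of Trapman~\cite{Trapman} and~\cite{B2}. The base case $n=1$ is immediate from the edge probability: $P(D(0,x)\le1)=1-\exp\{-\beta\frakq(x)\}\le\beta\frakq(x)\le C\beta/|x|^s$ by \eqref{E:1.1}, matching the claim since $s\theta_1=1$. For the inductive step, set $k:=\lfloor n/2\rfloor$ and split a shortest path at its $(k+1)$-th edge: on $\{k<D(0,x)\le n+1\}$ the first $k+1$ edges produce a triple $(0\to u,\,(u,v),\,v\to x)$ of edge-disjoint witnesses, so the van den Berg--Kesten inequality gives
\[
P(D(0,x)\le n+1)\le P(D(0,x)\le k)+\sum_{u,v\in\Z^d}P(D(0,u)\le k)\,\frakp_\beta(u,v)\,P(D(v,x)\le n-k).
\]

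To bound the double sum I would cap the induction hypothesis at~$1$, i.e., $P(D(0,w)\le m)\le\min\{1,c\beta^{s\theta_m}\texte^{s\tilde c m^{1/\Delta}}/(|w|^s m^p)\}$, use $\frakp_\beta(u,v)\le C\beta/|u-v|^s$, and apply the elementary estimate $\sum_w\min\{1,A/|w|^s\}\le CA^{d/s}$ (valid as $s>d$). Restricting to the dominant region $|u|,|v-x|\le|x|/4$ (where $|u-v|\ge|x|/2$, so $\frakp_\beta(u,v)\le 2^sC\beta/|x|^s$), the sum factorizes to
\[
\frac{C\beta^{1+d(\theta_k+\theta_{n-k})}}{|x|^s}\cdot\frac{\texte^{d\tilde c(k^{1/\Delta}+(n-k)^{1/\Delta})}}{(k(n-k))^{pd/s}}.
\]
Three arithmetic identities then close the induction at the balanced~$k$: by the recursion \eqref{E:2.3}--\eqref{E:2.4}, $1+d(\theta_{\lfloor n/2\rfloor}+\theta_{\lceil n/2\rceil})=s\theta_{n+1}$, matching the $\beta$-exponent exactly; the identity $2d\cdot2^{-1/\Delta}=s$ together with concavity of $x\mapsto x^{1/\Delta}$ (valid as $1/\Delta<1$) yields $d(k^{1/\Delta}+(n-k)^{1/\Delta})\le s(n+1)^{1/\Delta}$; and $(k(n-k))^{pd/s}=(n/2)^{2pd/s}$ dominates $(n+1)^p$ with slack $(n+1)^{(2d/s-1)p}$. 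The leftover term $P(D(0,x)\le k)\le f_k(x)$ is absorbed into half of the target, since $s(\theta_{n+1}-\theta_k)\ge1$ by the recursion and the exponential gap $\texte^{s\tilde c((n+1)^{1/\Delta}-k^{1/\Delta})}$ beats any polynomial factor once $\tilde c$ is fixed large enough.

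The main obstacle is the tail region where $|u|>|x|/4$ or $|v-x|>|x|/4$: one distance factor supplies $|x|^{-s}$ directly, but the complementary sum over the other variable picks up polynomial growth in~$n$ from convolution estimates of the form $\sum_v\frakp_\beta(u,v)\min\{1,B/|v-x|^s\}$, contributing an extra factor of at most $O(n^{s+1})$. The slack $(n+1)^{(2d/s-1)p}$ from the balanced main term must dominate this $n^{s+1}$ as well as the multiplicative constant~$C$ accumulated at each inductive step, which yields precisely the hypothesis $(2d/s)p\ge p+s+1$; that is exactly what lets a single constant $c=c(d,s,p)$ close the induction uniformly in $n$, $\beta\ge1$, and $x\in\Z^d\smallsetminus\{0\}$.
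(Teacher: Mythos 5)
Your base case and several of the arithmetic identities are correct, and fixing the balanced split $k=\lfloor n/2\rfloor$ (and absorbing the leftover $P(D(0,x)\le k)$ into the target via the exponential gap) is a legitimate simplification in principle. But the decomposition at the \emph{$(k+1)$-th edge of the path} rather than at the \emph{first edge of length $\ge |x|/(n+1)$} is a fatal deviation from the paper, and your treatment of the resulting tail region does not go through.

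Here is the difficulty. In the tail region the edge $(u,v)$ need not be long, so the only source of the factor $|x|^{-s}$ is one of the distance probabilities, say $P(D(0,u)\le k)\le A_k(4/|x|)^s$ with $A_m:=c\beta^{s\theta_m}\texte^{s\tilde cm^{1/\Delta}}m^{-p}$. What remains is a genuine double sum of $\frakp_\beta(u,v)\,\min\{1,A_{n-k}/|v-x|^s\}$, and the sharp convolution bound for two functions of the form $\min\{1,M/|\cdot|^s\}$ ($s>d$) gives
\begin{equation}
\sum_v \frakp_\beta(u,v)\min\Bigl\{1,\frac{A_{n-k}}{|v-x|^s}\Bigr\}
\le \frac{C}{|u-x|^s}\bigl(\beta^{d/s}A_{n-k}+\beta A_{n-k}^{d/s}\bigr),
\end{equation}
which after summing over $u$ produces, besides the ``good'' term $\beta A_k^{d/s}A_{n-k}^{d/s}$, the cross terms $\beta^{d/s}A_kA_{n-k}^{d/s}$ and $\beta^{d/s}A_k^{d/s}A_{n-k}$. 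The $\beta$-exponent of $\beta^{d/s}A_kA_{n-k}^{d/s}$ is $\frac ds+s\theta_k+d\theta_{n-k}=s\theta_{n+1}+(s-d)(\theta_k-\tfrac1s)$, which strictly exceeds the target exponent $s\theta_{n+1}$ as soon as $k\ge2$ (and $(s-d)\theta_k$ grows like $n^{1/\Delta}$ at $k=\lfloor n/2\rfloor$). The $\texte$-exponent $s k^{1/\Delta}+d(n-k)^{1/\Delta}$ is likewise strictly larger than $s(n+1)^{1/\Delta}$ at the balanced $k$, since $(s+d)/(2d)>1$ when $s>d$. Both excesses sit in the exponent and are therefore \emph{not} absorbable by the polynomial slack $(n+1)^{(2d/s-1)p}$; the claim that the tail costs only an extra $O(n^{s+1})$ is false, and the induction does not close.

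The paper's proof avoids this entirely by noting that any path of length $\le n+1$ from $0$ to $x$ must contain an edge of length $\ge|x|/(n+1)$, and decomposing at the \emph{first} such edge. This forces $\frakp_\beta(z,\tilde z)\le\hat c\beta(n+1)^s/|x|^s$ uniformly over the decomposition, so the remaining sum over endpoints factorizes as a product of expected intrinsic ball sizes, $E|\text{\rm B}(0,k)|\,E|\text{\rm B}(0,n-k)|$, with no convolution of $\min\{1,\cdot\}$ profiles and hence no dangerous cross terms. The cost is a sum over the position $k$ of the first long edge, which the paper handles with a separate elementary estimate (its Lemma~\ref{lemma-4.3}). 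If you want a balanced-$k$ version, you would still need to locate a long edge; simply splitting at a fixed step does not supply the necessary $|x|^{-s}$ without the runaway $\beta$-dependence identified above.
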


\begin{remark}
\cite[Theorem~3.1]{B2} gives the same estimate albeit without the~$\beta^{\theta_n}$- term. As $\theta_n\approx n^{1/\Delta}$, this term can be absorbed into a change of the constant~$\tilde c$.
\end{remark}

Let us first show how Proposition~\ref{prop-3.1}  fits into the proof of  Theorem~\ref{thm-2}:

\begin{proofsect}{Proof of $\ge$ in Theorem~\ref{thm-2}}
We start with some preliminary considerations. Let $n\ge1$ be a large integer and let~$\lambda\in[0,1]$ be such that $\lambda 2^n$ is an integer. For~$\beta>1$ set
\begin{equation}
N_n(\lambda,\beta):=\inf\Bigl\{k\ge0\colon \beta^{\theta_k}\texte^{\tilde c k^{1/\Delta}}\ge\beta^{\theta_{\lambda 2^n+(1-\lambda)2^{n+1}}}\Bigr\},
\end{equation}
where $\tilde c$ is the constant from Proposition~\ref{prop-3.1} and where~$\beta>1$ ensures that the set is non-empty. The  monotonicity of $k\mapsto\theta_k$  gives $N_n(\lambda,\beta)\le\lambda 2^n+(1-\lambda)2^{n+1}$ and
\begin{equation}
\theta_{N_n(\lambda,\beta)}\ge \theta_{\lambda 2^n+(1-\lambda)2^{n+1}} -\tilde c\frac{N_n(\lambda,\beta)^{1/\Delta}}{\log\beta}.
\end{equation}
Next, the concavity and piece-wise linear nature of~$k\mapsto\theta_k$ implies that, for all~$m$ with $2^n-1\le m< 2^{n+1}$ and all $k\le m$,
\begin{equation}
\theta_k\le\theta_m + (k-m)\frac{\theta_{2^{n+1}-1}-\theta_{2^n-1}}{2^n},
\end{equation}
Using this for $k:=N_n(\lambda,\beta)$ and $m:=\lambda 2^n+(1-\lambda)2^{n+1}$ gives
\begin{equation}
N_n(\lambda,\beta)\ge \lambda 2^n+(1-\lambda)2^{n+1} -\frac{\tilde c}{\log\beta}
\frac{N_n(\lambda,\beta)^{1/\Delta}}{\theta_{2^{n+1}-1}-\theta_{2^n-1}}\,2^n.
\end{equation}
 Finally,  $N_n(\lambda,\beta)\le 2^{n+1}$ along with~$2^{-1/\Delta}=\gamma$ implies $N_n(\lambda,\beta)^{1/\Delta}\le\gamma^{-n-1}$ while the explicit form \eqref{E:2.10} shows $\theta_{2^{n+1}-1}-\theta_{2^n-1}= s^{-1}\gamma^{-n}$. Putting this together, we get
\begin{equation}
\label{E:4.19a}
N_n(\lambda,\beta)\ge \bigl[\lambda 2^n+(1-\lambda)2^{n+1}\bigr]\Bigl(1 -\frac{2d\tilde c }{\log\beta}\Bigr),
\end{equation}
where we also used that $2^n\le\lambda 2^n+(1-\lambda)2^{n+1}$.

We now move to the proof of the claim. Fix~$\lambda\in[0,1]$ and let~$\lambda_n\in[0,\lambda]$ be maximal  such  that~$\lambda_n2^n$ is an integer. Since $\theta_n\to\infty$ and $N_n(\lambda_n,\beta)\to\infty$ as~$n\to\infty$, for any~$x\in\R^d$  such that $|x|\ge1$  the bound \eqref{E:3.1} shows that
\begin{equation}
D\bigl(0,[ x\beta^{\theta_{\lambda_n 2^n+(1-\lambda_n)2^{n+1}}}]\bigr) \ge N_n(\lambda_n,\beta)
\end{equation}
occurs with probability tending to one as $n\to\infty$. Bounding $D(\cdot,\cdot)\le\wt D(\cdot,\cdot)$ and substituting $x:=W$, for~$W$ the random variable from \eqref{E:w-def}, the local uniformity of the convergence of $E\wt D(0,r^{\gamma^{-n}}W)/2^n \to L_\beta(r)$ along with the asymptotic  \eqref{E:2.15} show
\begin{equation}
L_\beta\bigl(\beta^{[\lambda+(1-\lambda)\gamma^{-1}]\frac1{2d-s}}\bigr) \ge \bigl[\lambda+(1-\lambda)2\bigr]\Bigl(1 -\frac{2d\tilde c}{\log\beta}\Bigr).
\end{equation}
Modulo the form of the error term, this bound is complementary to \eqref{E:4.54}. The same calculation based on \twoeqref{E:2.17}{E:2.24} then proves ``$\ge$'' in \eqref{E:1.5}.
\end{proofsect}

It remains to give a proof of Proposition~\ref{prop-3.1}.
As in \cite{B2}, we will proceed by induction which will require control of the expected size of balls in the intrinsic metric. Denote  such a ball as 
\begin{equation}
\text{\rm B}(x,k):=\bigl\{z\in\Z^d\colon D(x,z)\le k\bigr\}.
\end{equation}
We then have:

\begin{lemma}
\label{lemma-4.2}
For all $d\ge1$ and $s>d$ there is $a=a(d,s)\in(0,\infty)$ such that if \eqref{E:3.1} holds for some $\beta>0$, $p>0$, an integer~$n\ge1$ and a constant~$c>0$ then
\begin{equation}
E\bigl(|\text{\rm B}(0,n)|\bigr)\le a\bigl(c^{1/s}\beta^{\theta_n} \texte^{\tilde c n^{1/\Delta}} n^{-p/s}\bigr)^d
\end{equation}
\end{lemma}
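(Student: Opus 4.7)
The plan is to bound $E|\text{\rm B}(0,n)|$ by summing the pointwise bound \eqref{E:3.1} over $x\in\Z^d$, cutting the sum at a natural threshold where the bound becomes trivial. Specifically, set
\begin{equation}
R := c^{1/s}\beta^{\theta_n}\texte^{\tilde c n^{1/\Delta}}\,n^{-p/s},
\end{equation}
so that the right-hand side of \eqref{E:3.1} equals $(R/|x|)^s$. Then $(R/|x|)^s\ge 1$ iff $|x|\le R$, and the bound in \eqref{E:3.1} is only informative for $|x|>R$.

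Starting from
\begin{equation}
E\bigl(|\text{\rm B}(0,n)|\bigr)=\sum_{x\in\Z^d}P\bigl(D(0,x)\le n\bigr),
\end{equation}
I would split the sum into $|x|\le R$ and $|x|>R$. For the first piece I use the trivial bound $P(\cdot)\le 1$, which contributes $\#\{x\in\Z^d\colon|x|\le R\}\le a_1 R^d$ (with $a_1$ depending on $d$ and the norm~$|\cdot|$, using that balls in any norm on $\R^d$ have volume comparable to $R^d$). For the second piece I insert \eqref{E:3.1} to get
\begin{equation}
\sum_{|x|>R}P\bigl(D(0,x)\le n\bigr)\le R^s\sum_{x\in\Z^d\colon|x|>R}|x|^{-s}.
\end{equation}

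The key integrability fact is that since $s>d$, one has $\sum_{x\in\Z^d\colon|x|>R}|x|^{-s}\le a_2 R^{d-s}$ for some $a_2=a_2(d,s)\in(0,\infty)$ and all $R\ge 1$; this follows by the standard comparison with the integral $\int_{|y|>R}|y|^{-s}\textd y$ in $\R^d$. Combining, the second piece is at most $a_2 R^d$. Adding the two contributions and setting $a:=a_1+a_2$ yields the claim when $R\ge 1$. The case $R<1$ is handled separately: then $c^{1/s}\beta^{\theta_n}\texte^{\tilde c n^{1/\Delta}}n^{-p/s}<1$ and one can directly use $\#\{x\colon|x|\le R\}\le a_3$ (a constant) while the tail sum is bounded the same way by replacing $R$ with $\max\{R,1\}$ in the cutoff; adjusting $a$ absorbs this.

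No real obstacle arises here — the argument is a routine truncation and the only $s$-dependent ingredient (tail integrability of $|x|^{-s}$) is exactly guaranteed by the hypothesis $s>d$. The constant $a$ depends only on $d$ and~$s$ (and the fixed norm $|\cdot|$) as required.
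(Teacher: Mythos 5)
Your proof is correct and is essentially the paper's argument: split the sum defining $E\bigl(|\text{\rm B}(0,n)|\bigr)$ at a radius $r$, bound the inner piece trivially by the number of lattice points, bound the outer piece by inserting \eqref{E:3.1} together with $\sum_{|x|>r}|x|^{-s}\le a_2\,r^{d-s}$ (valid since $s>d$), and then choose $r$ to balance the two terms; the optimal choice is exactly your $R$, and the paper phrases the same step as ``optimizing over~$r$.'' One minor remark: your separate treatment of the case $R<1$ does not actually work as written, since an $O(1)$ contribution (e.g.\ from $x=0$) cannot be absorbed into $aR^d$ if $R$ were allowed to be arbitrarily small; the point is moot, however, because applying \eqref{E:3.1} to any $x$ with $|x|=1$ and using $P(D(0,x)\le n)=1$ already forces $R\ge1$.
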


\begin{proofsect}{Proof}
Given any real~$r>0$, assuming \eqref{E:3.1} we have
\begin{equation}
E\bigl(|\text{\rm B}(0,n)|\bigr)\le\sum_{|x|\le r}1+A^s\sum_{|x|>r}\frac1{|x|^s},
\end{equation}
where $A:=c^{1/s}\beta^{\theta_n}\texte^{\tilde c n^{1/\Delta}} n^{-p/s}$. The right-hand side is bounded by a $d,s$-dependent constant times $r^d+A^s r^{d-s}$. Optimizing over~$r$ then yields the claim.
\end{proofsect}

Another technical input we will need is a bound on a sum that appears in the proof of the induction step:

\begin{lemma}
\label{lemma-4.3}
For all~$d\ge1$, $s\in(d,2d)$ and~$p>0$ there is a constant~$b=b(d,s,p)\in(0,\infty)$ such that for all $\tilde c\ge1$ and all integer~$n\ge1$,
\begin{equation}
\label{E:4.6a}
\sum_{k=0}^{n}\frac{\texte^{\tilde c d[k^{1/\Delta}+(n-k)^{1/\Delta}]}}{(k\vee1)^{pd/s}((n-k)\vee1)^{pd/s}}\le b\frac{\texte^{\tilde cs(n+1)^{1/\Delta}}}{(n+1)^{-1+2pd/s}}.
\end{equation}
Here~$k\vee 1$ is the maximum of~$k$ and~$1$.
\end{lemma}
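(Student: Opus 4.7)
The plan is to exploit two features of the exponent $\phi(k):=d[k^{1/\Delta}+(n-k)^{1/\Delta}]$ that follow from $1/\Delta=\log_2(2d/s)\in(0,1)$: strict concavity of $x\mapsto x^{1/\Delta}$ on $(0,\infty)$, and the algebraic identity $d\cdot 2^{\,1-1/\Delta}=s$. Together they give the pointwise upper bound $\phi(k)\le \phi(n/2)=s\,n^{1/\Delta}$ for every $k\in\{0,\dots,n\}$, which already matches the exponent on the right-hand side up to the harmless replacement $n\mapsto n+1$.

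The heart of the matter is a quadratic refinement of this inequality. Setting $k=n/2+t$ with $t\in[-n/2,n/2]$ and $F(t):=(n/2+t)^{1/\Delta}+(n/2-t)^{1/\Delta}$, I would compute
\begin{equation*}
F''(t)=-\tfrac{1}{\Delta}\bigl(1-\tfrac{1}{\Delta}\bigr)\bigl[(n/2+t)^{1/\Delta-2}+(n/2-t)^{1/\Delta-2}\bigr]
\end{equation*}
and observe, via AM-GM followed by monotonicity of $x\mapsto x^{1/\Delta-2}$, that the bracket is at least $2(n/2)^{1/\Delta-2}$ uniformly on $|t|\le n/2$. Taylor's theorem with remainder then yields, for some $c_1=c_1(d,s)>0$,
\begin{equation*}
s\,n^{1/\Delta}-\phi(k)\,\ge\, c_1\,n^{1/\Delta-2}(k-n/2)^2,\qquad k\in\{0,\dots,n\}.
\end{equation*}

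Armed with this, I would rewrite the sum as $S=\texte^{\tilde c\,s\,n^{1/\Delta}}\sum_k \texte^{-\tilde c\,G(k)}\,[(k\vee 1)((n-k)\vee 1)]^{-pd/s}$, where $G(k):=s\,n^{1/\Delta}-\phi(k)\ge 0$, and split according to whether $k\in[n/4,3n/4]$ (bulk) or not (tail). On the bulk, both $k\vee 1$ and $(n-k)\vee 1$ are at least $n/4$, so the polynomial factor is $O(n^{-2pd/s})$; together with the trivial bound $\texte^{-\tilde c\,G(k)}\le 1$ and an $O(n)$ count of terms, the bulk contributes at most $C\,n^{1-2pd/s}\,\texte^{\tilde c\,s\,n^{1/\Delta}}$. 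On the tail, the quadratic refinement applied with $(k-n/2)^2\ge n^2/16$ gives $G(k)\ge c_1 n^{1/\Delta}/16$, so $\texte^{-\tilde c\,G(k)}\le \texte^{-c_1 n^{1/\Delta}/16}$ uniformly in $\tilde c\ge 1$; combined with the pointwise bound $[(k\vee 1)((n-k)\vee 1)]^{-pd/s}\le 1$ and the $O(n)$ term count, the tail contributes at most $(n+1)\,\texte^{-c_1 n^{1/\Delta}/16}\,\texte^{\tilde c\,s\,n^{1/\Delta}}$. The prefactor $(n+1)\texte^{-c_1 n^{1/\Delta}/16}$ decays super-polynomially in $n$ and is therefore dominated by a $(d,s,p)$-dependent constant times $n^{1-2pd/s}$ uniformly for $n\ge 1$.

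Adding the two pieces and trading $n$ for $n+1$ (at the cost of a factor at most $2^{|1-2pd/s|}$ in the polynomial, together with the trivial $\texte^{\tilde c\,s\,n^{1/\Delta}}\le \texte^{\tilde c\,s(n+1)^{1/\Delta}}$) delivers the claim with $b=b(d,s,p)$. The main obstacle is securing the uniform lower bound on $|F''|$ across the full range $|t|\le n/2$---including values near the endpoints, where one summand in $F''$ remains bounded while the other blows up---which is exactly where the AM-GM step is used. A secondary subtlety is that every estimate has to be uniform in $\tilde c\ge 1$, so the tail gain must come entirely from intrinsic super-polynomial decay and not from optimizing any $\tilde c$-dependent parameter.
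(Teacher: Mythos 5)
Your proof is correct and follows the same strategy as the paper's: establish a quadratic refinement of the concavity bound $d[k^{1/\Delta}+(n-k)^{1/\Delta}]\le s\,n^{1/\Delta}-\text{const}\cdot n^{1/\Delta-2}(k-n/2)^2$, split the sum into a central bulk (where the polynomial denominator does the work) and tails (where the quadratic deficit gives super-polynomial decay uniformly in $\tilde c\ge1$). The paper states the quadratic bound as $x^{1/\Delta}+(1-x)^{1/\Delta}\le 2^{1-1/\Delta}-\kappa(x-1/2)^2$ and cuts at $n/3,2n/3$ rather than $n/4,3n/4$, but these are cosmetic differences; your more explicit derivation of the second-derivative lower bound via convexity is a perfectly good way to ``readily check'' the paper's inequality.
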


\begin{proofsect}{Proof}
Using that~$\Delta>1$ we readily check that $x^{1/\Delta}+(1-x)^{1/\Delta}\le 2^{1-1/\Delta}-\kappa(x-1/2)^2$ for some~$\kappa>0$ and all~$x\in[0,1]$. Noting that $d2^{1-1/\Delta}=s$, for   $k$ such that either~$k\le n/3$ or~$k\ge 2n/3$  the numerator is bounded as
\begin{equation}
\texte^{\tilde c d[k^{1/\Delta}+(n-k)^{1/\Delta}]}
\le \texte^{\tilde c s n^{1/\Delta}}\,\texte^{-\frac1{36}d\tilde c\kappa n^{1/\Delta}}.
\end{equation}
The sum in \eqref{E:4.6a} is thus dominated by $k$ in the range~$n/3\le k\le2n/3$, where the numerator at most $\texte^{\tilde c s n^{1/\Delta}}$ while the denominator is at least a constant times $n^{-2dp/s}$. As there are at most~$n$ terms under the sum in this range, the result follows.
\end{proofsect}

Equipped with these technical lemmas, we now prove the induction step:

\begin{lemma}
\label{lemma-4.4}
Suppose that~$\hat c\in(0,\infty)$ is a constant such that,  for each~$\beta\ge1$, 
\begin{equation}
\label{E:5.15}
\forall x\in\Z^d\smallsetminus\{0\}\colon\quad \frakp_\beta(0,x)\le\frac{\hat c\beta}{|x|^s}.
\end{equation}
Then the following holds for all $p>0$, $c>0$ and~$\beta\ge1$ and all integers~$n\ge1$: Assuming that \eqref{E:3.1} is true for all~$x\in\Z^d\smallsetminus\{0\}$ and all positive integers less or equal than~$n$, then
\begin{multline}
\label{E:5.16}
\quad
P\bigl(D(0,x)\le n+1\bigr)
\\
\le
\hat c b\, ac^{d/s}\max\{a c^{d/s},\beta^{1+d\theta_n-s\theta_{n+1}}\}\,\Bigl(\beta^{\theta_{n+1}}\frac{\texte^{\tilde c(n+1)^{1/\Delta}}}{|x|}\Bigr)^s\,(n+1)^{s+1-2pd/s}
\quad
\end{multline}
holds for all~$x\in\Z^d\smallsetminus\{0\}$, where~$a$ and~$b$ are the constants from Lemmas~\ref{lemma-4.2}-\ref{lemma-4.3}.
\end{lemma}

\begin{proofsect}{Proof}
Fix~$n\ge1$ and assume that \eqref{E:3.1} holds for all naturals up to and including~$n$. By the triangle inequality, on $\{D(0,x)\le n+1\}$ every path connecting~$0$ to~$x$ must contain an edge of length at least~$|x|/(n+1)$. Writing $(z,\tilde z)$ for the first edge with this property along the path (as labeled from~$0$ to~$x$), we have
\begin{multline}
\bigl\{D(0,x)\le n+1\bigr\}
\\
\subseteq
\bigcup_{k=0}^{n}\bigcup_{\begin{subarray}{c}
z,z'\in\Z^d\\|\tilde z-z|\ge\frac{|x|}{n+1}
\end{subarray}}
\Bigl(\bigl\{D(0,z)\le k\bigr\}\circ\bigl\{(z,\tilde z)\text{ occupied}\bigr\}\circ \bigl\{D(x,\tilde z)\le n-k\bigr\}\Bigr)
\end{multline}
where $A_1\circ A_2\circ A_3$ is the set of edge-configurations $\omega$ for which there are disjoint finite sets of edges $S_1,S_2,S_3\subseteq\Z^d$ (called witnesses) such that, for each~$i=1,2,3$, the event~$A_i$ occurs in every configuration that agrees with~$\omega$ on~$S_i$. (On $\{D(0,x)\le n+1\}$, these witnesses arise as the corresponding portions of the minimal shortest path, with minimality taken relative to an \emph{a priori} ordering on finite paths on~$\Z^d$. These portions are necessarily edge-disjoint because the path is of minimal length.) Hereby we~get
\begin{multline}
\qquad
P\bigl(D(0,x)\le n+1\bigr)
\\
\le \sum_{k=0}^{n}\sum_{\begin{subarray}{c}
z,z'\in\Z^d\\|\tilde z-z|\ge\frac{|x|}{n+1}
\end{subarray}}\frakp_\beta(z,\tilde z)
P\bigl(D(0,z)\le k\bigr)P\bigl(D(x,\tilde z)\le n-k\bigr),
\qquad
\end{multline}
 employing the  union bound and the van den Berg-Kesten inequality~\cite{vdBK}.

Using \eqref{E:5.15} and ignoring the restriction on the size of $|z-\tilde z|$ we obtain
\begin{equation}
\label{E:3.4}
P\bigl(D(0,x)\le n+1\bigr)
\le\hat c\beta\frac{(n+1)^s}{|x|^s}\sum_{k=0}^{n}E\bigl(|\text{\rm B}(0,k)|\bigr) E\bigl(|\text{\rm B}(0,n-k)|\bigr).
\end{equation}
With the help of the translation invariance of the underlying process, the induction assumption allows us to bound the expectations on the right via Lemma~\ref{lemma-4.2}. Plugging this in \eqref{E:3.4}  while noting that we need to use $|\text{\rm B}(0,0)|=1$ when $k=0,n$ shows 
\begin{multline}
\label{E:3.4b}
\quad
P\bigl(D(0,x)\le n+1\bigr) \le  2\hat c\frac{(n+1)^s}{|x|^s}a c^{d/s}\beta^{1+d\theta_n}\texte^{\tilde c d n^{1/\Delta}} 
\\
+\hat c\frac{(n+1)^s}{|x|^s}(ac^{d/s})^2\sum_{k=1}^{n-1}\beta^{1+d(\theta_k+\theta_{n-k})}\frac{\texte^{\tilde c d(k^{1/\Delta}+(n-k)^{1/\Delta}}}{k^{pd/s}(n-k)^{pd/s}}.
\quad
\end{multline}
Definition~\ref{def-1} gives $1+d(\theta_k+\theta_{n-k})\le s\theta_{n+1}$ for all~$k$ under the sum and so the $\beta$-dependent prefactor is no larger than $\beta^{s\theta_{n+1}}$. (This is where we need $\beta\ge1$.)  Using the maximum in \eqref{E:5.16} to unify the prefactors and subsequently joining the first term on the right \eqref{E:3.4b} with the sum,   the claim follows from Lemma~\ref{lemma-4.3}.
\end{proofsect}

We also need a simple observation about the exponent sequence:

\begin{lemma}
For each $d\ge1$ and $s\in(d,2d)$,
\begin{equation}
\label{E:5.21}
\inf_{n\ge2}\,\bigl[s\theta_{n+1}-1-d\theta_n\bigr]>0.
\end{equation}
\end{lemma}

\begin{proofsect}{Proof}
Thanks to the monotonicity of $k\mapsto\theta_k$, the quantity under infimum is larger than $(s-d)\theta_n-1$, which (in light of $\theta_k\to\infty$) is uniformly positive once~$n$ is sufficiently large. It thus suffices to show that $s\theta_{n+1}-1-d\theta_n>0$ for all~$n\ge2$. By concavity of $k\mapsto\theta_k$ and the explicit representation \twoeqref{E:2.10}{E:2.11}, this is the case as long as $\{1,\dots,n-1\}$ contains a value of the form $2^k-1$ for some natural $k\ge0$. This is true for all~$n\ge2$.
\end{proofsect}

With all the lemmas in hand, we are ready to give:

\begin{proofsect}{Proof of Proposition~\ref{prop-3.1}}
 First observe that, by the fact that $1-\texte^{-\beta\frakq(x)}\le\beta\frak q(x)$, the bound \eqref{E:5.15} is in force for~$x$ large thanks to the assumption \eqref{E:1.1}. Adjusting~$\hat c$ if necessary and assuming~$\beta\ge1$, we then get it for all~$x\ne0$. 

 We will now prove \eqref{E:3.1} by induction on~$n$. There are two base cases: $n=1$ and $n=2$. The former  is settled by the observation that in order for~$x\ne0$ to obey~$D(0,x)\le1$, the edge $(0,x)$ must be occupied. By \eqref{E:5.15} this has probability at most $\hat c\beta|x|^{-s}$ and so, in light of~$\theta_1=1/s$, \eqref{E:3.1} holds for~$n=1$ provided that~$c,\tilde c>0$ obey 
\begin{equation}
\label{E:4.13a}
\hat c\le c\texte^{\tilde c s}.
\end{equation}
 (This holds regardless of $\beta>0$.) As to~$n=2$, here the union bound dominates the probability of $D(0,x)=2$ by
\begin{equation}
\sum_{z\ne0,x}\frakp_\beta(0,z)\frakp_\beta(z,x)\le(\hat c\beta)^2\sum_{z\ne0,x}\frac1{|z|^{{}s}}\frac1{|x-z|^s}.
\end{equation}
Since~$s>d$, the sum on the right is bounded by $\tilde a|x|^{-s}$ for some constant~$\tilde a$ depending only on~$s$ and~$d$. Using the union bound, the claim will hold for~$n=2$ provided that~$c$, $\tilde c$ and~$\beta$ obey
\begin{equation}
\hat c\beta+(\hat c\beta)^2\tilde a\le c \,2^{-p}\,\texte^{\tilde c\,2^{1/\Delta}s}\beta^{s+d},
\end{equation}
where we also used that $s\theta_2=s+d$. As $s+d\ge2$, this is in turn implied by
\begin{equation}
\label{E:5.25}
\hat c+\hat c^2\tilde a\le c\,2^{-p}\texte^{\tilde c\,2^{1/\Delta}s}
\end{equation}
whenever~$\beta\ge1$.

Assuming~$p$ is such that \eqref{E:3.0} holds, once \eqref{E:3.1} is true for all integers up to and including~$n$, Lemma~\ref{lemma-4.4} will extend it to~$n+1$ as soon as
\begin{equation}
\label{E:4.14a}
\hat c b \, ac^{d/s}\max\{a c^{d/s},\beta^{1+d\theta_n-s\theta_{n+1}}\}\le c,
\end{equation}
where~$a$ and~$b$ are as in Lemmas~\ref{lemma-4.2}--\ref{lemma-4.3}.  Writing~$\kappa$ for the infimum \eqref{E:5.21}, this will hold for all $n\ge2$ if
\begin{equation}
\hat c b a^2 c^{2d/s}\le c,
\end{equation}
provided that $\beta\ge\beta_0:=\max\{1,(a c^{d/s})^{-1/\kappa}\}$. This can itself be achieved by taking~$c$ sufficiently small (note that~$2d/s>1$); the inequalities \eqref{E:4.13a} and \eqref{E:5.25} then hold once~$\tilde c$ is sufficiently large. By induction, the claim holds for all~$n\ge1$.
\end{proofsect}

\section*{Acknowledgments}
\nopagebreak\noindent
This work has been partially supported by NSF award DMS-1954343.

\end{document}